\newcommand{\ed}{{\rm d}}
\newcommand{\w}{{\mathchoice{\,{\scriptstyle\wedge}\,}{{\scriptstyle\wedge}}
      {{\scriptscriptstyle\wedge}}{{\scriptscriptstyle\wedge}}}}
\newcommand{\del}{{\partial}}
\newcommand{\dels}[1]{\partial_{\sigma_{#1}}}
\newcommand{\delsb}[1]{\partial_{\sigmab_{#1}}}
\newcommand{\liealgebra}[1]{{\mathfrak {#1}}}
\newcommand{\g}{\liealgebra{g}}
\newcommand{\sla}{\liealgebra{sl}}
\newcommand{\liegroup}[1]{{\operatorname{#1}}}
\newcommand{\SL}{\liegroup{SL}}
\newcommand{\C}{\mathbb C}
\newcommand{\PP}{\mathbb P}
\DeclareMathOperator{\ad}{ad}
\newcommand{\tr}{\rm tr}
\newcommand{\dd}[2]{\frac{\partial {#1}}{\partial {#2}}}
\newcommand{\bdot}{\overset{\bm .}}
\newcommand{\mcd}{\mathcal D}
\newcommand{\mcf}{\mathcal F}
\newcommand{\mcl}{\mathcal L}
\newcommand{\mco}{\mathcal O}
\newcommand{\mcs}{\mathcal S}
\newcommand{\mct}{\mathcal T}
\newcommand{\mcv}{\mathcal V}
\newcommand{\im}{\textnormal{i}}
\newcommand{\tne}{\textnormal{e}}
\newcommand{\balpha}{\boldsymbol{\alpha}}
\newcommand{\bomega}{\boldsymbol{\omega}}
\newcommand{\btheta}{\boldsymbol{\theta}}
\newcommand{\bphi}{\boldsymbol{\phi}}
\newcommand{\bPhi}{\boldsymbol{\Phi}}
\newcommand{\bsigma}{\boldsymbol{\sigma}}
\newcommand{\bzeta}{\boldsymbol{\zeta}}
\newcommand{\bmcs}{\boldsymbol{\mcs}} 
\newcommand{\bmg}{\boldsymbol{\g}} 
\newcommand{\bba}{\tb{a}}
\newcommand{\bbb}{\tb{b}}
\newcommand{\bbc}{\tb{c}}
\newcommand{\bP}{\tb{P}}\newcommand{\bbp}{\tb{p}}
\newcommand{\bQ}{\tb{Q}}
\newcommand{\bbt}{\tb{t}}
\newcommand{\bV}{\tb{V}}
\newcommand{\bY}{\tb{Y}}
\newcommand{\bZ}{\tb{Z}}
\newcommand{\cbphi}{\check{\boldsymbol{\phi}}}
\newcommand{\cbmcs}{\check{\boldsymbol{\mcs}}}
\newcommand{\hbphi}{\hat{\boldsymbol{\phi}}}
\newcommand{\hbmcs}{\hat{\boldsymbol{\mcs}}}
\newcommand{\hbY}{\hat{\bY}}
\newcommand{\hbP}{\hat{\bP}}
\newcommand{\hbmg}{\hat{\bmg}}
\newcommand{\ol}{\overline}
\newcommand{\ab}{\ol{a}}
\newcommand{\hb}{\ol{h}}
\newcommand{\tba}{\ol{t}}
\newcommand{\Uba}{\ol{U}}
\newcommand{\sigmab}{\ol{\sigma}}
\newcommand{\xib}{\ol{\xi}}
\newcommand{\tbar}[1]{t_{\ol{#1}}}
\newcommand{\Fh}[1]{\hat{\mcf}^{(#1)}}
\newcommand{\be}{\begin{equation}}
\newcommand{\ee}{\end{equation}}
\newcommand{\benu}{\begin{enumerate}}
\newcommand{\enu}{\end{enumerate}}
\newcommand{\beit}{\begin{itemize}}
\newcommand{\enit}{\end{itemize}}
\newcommand{\bp}{\begin{pmatrix}}
\newcommand{\ep}{\end{pmatrix}}
\newcommand{\lra}{\longrightarrow}
\newcommand{\n}{\notag}
\newcommand{\noi}{\noindent}
\newcommand{\np}{\newpage}
\newcommand{\tn}{\textnormal}
\newcommand{\tb}{\textbf}
\newcommand{\one}{\vspace{1mm}}
\newcommand{\two}{\vspace{2mm}}
\newcommand{\tcr}{\textcolor{black}}
\newcommand{\ttt}{\tb{\texttt{t}}}
\newcommand{\ftmark}{\footnotemark}
\newcommand{\fttext}{\footnotetext}
\newcommand{\sub}{\subsection}
\newcommand{\subb}{\subsubsection}
\newcommand{\Vir}{\mcv\tn{ir}}
\newcommand{\conjt}[1]{\ol{#1}^t}
\newcommand{\Rmnum}[1]{\expandafter\@slowromancap\romannumeral #1@}
\theoremstyle{plain}
\newtheorem{thm}{Theorem}[section]
\newtheorem{lem}[thm]{Lemma}
\newtheorem{cor}[thm]{Corollary}
\newtheorem{prop}[thm]{Proposition}
\newtheorem{rem}[thm]{Remark}
\theoremstyle{definition}
\newtheorem{defn}{Definition}[section]
\begin{document}

\title[CMC hierarchy \Rmnum{2}]
{CMC hierarchy \Rmnum{2}: Non-commuting symmetries and \\affine Kac-Moody algebra}
\author{Joe S. Wang}
\email{jswang12@gmail.com}
\subjclass[2000]{53C43}
\date{\today}
\keywords{differential geometry, exterior differential system,  CMC surface, 
dressing, wave function, spectral Killing field, integrable extension,
affine Kac-Moody algebra, affine Killing field, tau function}
\begin{abstract}
We propose a further extension of the structure equation for a truncated  CMC hierarchy
by the non-commuting, truncated Virasoro algebra of non-local symmetries.
Via a canonical dressing transformation, we first define the wave function for a truncated CMC hierarchy.
This leads to a pair of additional formal Killing fields, 
and  the corresponding spectral Killing field is defined 
by a purely algebraic formula up to an integrable extension.
The extended CMC hierarchy is obtained by packaging these data
into the associated affine Kac-Moody algebra valued Killing field equations.
The  log of tau function 
is defined as the central component of the affine extension of the  spectral Killing field.
We give a closed formula for tau function in terms of the determinant of the spectral Killing field.
\end{abstract}
\maketitle

\setcounter{tocdepth}{2}
\tableofcontents



%
%
%

\section{Introduction}\label{sec:intro}
This is the second part of the series on CMC hierarchy.
\sub{Classical spectral symmetry}
In the previous work \cite{Wang2013},
an analysis of the infinite jet space of the differential system for CMC surfaces
revealed the canonically defined notion of  spectral weight.
It was observed through various applications 
that the corresponding weighted homogeneity 
is an important and useful aspect of the structural property of the CMC system.

In order to give this notion a proper treatment, 
the associated  classical  \tb{spectral symmetries} (vector fields) were introduced,
for which the spectral weights are the weights under Lie derivatives.
The classical spectral symmetries are the non-local objects
defined on an integrable extension, and by construction
they form an affine space over the classical symmetries.

\sub{Spectral Killing field}
In this paper, we show that there exist  the  higher-order spectral symmetries
for a truncated\ftmark\fttext{See \S\ref{sec:introtruncated} below.} CMC hierarchy  
which are generated by \tb{spectral Killing fields}. 
Similarly as  the classical spectral symmetries,
the spectral Killing fields form an affine space over the formal Killing fields.
In fact, in terms of the Lie bracket relations with the formal Killing fields,
we will be able to make a preferred choice of the normalized spectral Killing field.

The normalized spectral Killing field is another important invariant  of a truncated CMC hierarchy.
One of the main results of this paper is to give a concrete realization of 
the higher-order spectral symmetries generated by the  normalized spectral Killing field
as the pair of  truncated Virasoro algebras of (non-commuting) 
symmetry vector fields on the formal moduli space of solutions to
a truncated CMC hierarchy. 

This is achieved by extending a truncated CMC hierarchy
to a system of equations for the associated affine Kac-Moody algebra valued  Killing fields.
By a sequence of natural decompositions,  
similarly as in the AKS construction of the original CMC hierarchy,
the normalized spectral Killing field provides  the relevant deformation coefficients
and in this sense generates the  extension.
\subb{Truncated CMC hierarchy}\label{sec:introtruncated}
To avoid some of the formal series vs. convergence related issues, 
and to stay within the realm of differential algebraic analysis,
we shall truncate the time variables
and work with a truncated CMC hierarchy, \S\ref{sec:truncated}.
The truncation integer parameter ``$N$" also determines 
the corresponding truncation of the associated  (centerless) Virasoro algebras, \S\ref{sec:hVir}.
See \S\ref{sec:formalvs} for the related remarks.
 
\sub{Related works}
The Virasoro algebra type of non-commuting symmetries 
for the integrable equations of KP type
are discussed in \cite{Dickey2003}  in the context of additional symmetries.
For the more recent works, we refer to \cite{Wu2012,Wu2013} and the references therein.

For the physics literature on the extended symmetries of  coset models,
we refer to \cite{Nicolai1991b} and the references therein.
For the string theory aspects of the extended symmetries of  integrable equations, 
we refer to the survey \cite{Moerbeke1994}. 
Witten gave the original formulation of the conjecture that
the partition function for a certain two dimensional model of quantum gravity
is a tau-function for KdV hierarchy, 
which is subject to the additional  Virasoro constraints, \cite{Witten1991,Kontsevich1992}.
The motivation for the ansatz for the affine extension of a truncated CMC hierarchy
is drawn from \cite{Semikhatov1991}.

Recently, Terng and Uhlenbeck gave 
a comprehensive analysis of the Virasoro symmetries
for a class of matrix Lax equations,
\cite{Terng2014b,Terng2014a}.
The main difference is that 
they apply the loop group action based on loop group factorization,
whereas we apply the dressing transformation based on  loop algebra decomposition.

\two
In  \cite{Wu2012} cited above, Wu gave a  uniform construction of the tau functions 
for  Drinfeld-Sokolov hierarchies via  the certain generating functions of Hamiltonian densities.
The underlying idea of this construction agrees with our definition, Defn.\ref{defn:tau}.
For the related works, we refer to \cite{Miramontes1999}, \cite{Terng2014b,Terng2014a} 
and the references therein.

\sub{Results}
\subb{Spectral Killing field}
Via a canonical dressing transformation, we define the wave function for 
a  truncated  CMC hierarchy, Defn.\ref{defn:wave}.
Based on this idea,  we determine the space of (formal) solutions to 
the associated Killing field equation, 
Eq.\eqref{eq:PPpm}, Thm.\ref{thm:PP}. 
The   Killing fields satisfy the $\sla(2,\C)$-type of  Lie bracket relations, 
and
this observation leads to an algebraic formula (up to an integrable extension)  
for the normalized spectral Killing field, Prop.\ref{prop:spectral}.
\subb{Extended CMC hierarchy}
The relevant Lie algebra for our analysis is the generalized affine Kac-Moody algebra $\hbmg^{\pm}_{N+1}$,
which is the twisted loop algebra $\bmg$, \eqref{eq:twisted},
enhanced by the truncated Virasoro algebra of even derivations $\Vir_{N+1}^{\pm}$,
Defn.\ref{defn:truncatedVir}.
We propose an extension of a truncated CMC hierarchy
in terms of the $\hbmg^{\pm}_{N+1}$-valued  affine Killing field equations,
Defn.\ref{defn:eCMC}.

The compatibility of the extended CMC hierarchy is proved 
by a direct computation, Thm.\ref{thm:main}. 
This in particular shows that the Virasoro symmetries commute with the higher-order symmetries
of the CMC system.
The compatibility also reflects  that the differential algebraic relations among 
the formal Killing fields, the normalized spectral Killing field, and
the Maurer-Cartan form for a truncated CMC hierarchy 
are consistent with the Lie algebra structure of  $\hbmg^{\pm}_{N+1}$. 
\subb{Tau function}
The log of tau function (denoted by $\tau$) for the extended CMC hierarchy is defined
as the central component of the affine extension of the spectral Killing field, Defn.\ref{defn:tau}.
We give a closed formula for $\log(\tau)$, Thm.\ref{thm:tau}. It states that
\begin{align}
\log(\tau) =\tn{Res}_{\lambda=0}&\big[ \big(\tn{Killing field for Virasoro algebra}\big) \\
&\;\times \big(\det(\tn{normalized spectral Killing field})\big) \big].\n
\end{align}
Here   $\tn{Res}_{\lambda=0}$
is the residue operator that takes the terms of $\lambda$-degree 0.

\sub{Contents}
In \S\ref{sec:formulas}, 
we give  a  summary of the relevant  formulas from Part \Rmnum{1}.
In \S\ref{sec:dressing}, 
we introduce a canonical dressing transformation.
In \S\ref{sec:addKilling}, 
we determine  a pair of additional  formal Killing fields.
Based on this, in \S\ref{sec:spectral} 
we give an algebraic formula for the normalized spectral Killing field.
In \S\ref{sec:affineKilling},
we show that the original formal Killing field   and the normalized spectral Killing field  
admit the $\hbmg^{\pm}_{N+1}$-valued affine lifts.
In \S\ref{sec:extension},
we examine an ansatz for the  affine extension of the structure equation 
for the formal Killing field,
and show that it is compatible.
In \S\S\ref{sec:CMC+},\ref{sec:addaffine},\ref{sec:exformulas},
we give a definition of the extended CMC hierarchy,
and subsequently in \S\ref{sec:proof} 
show that the extended structure equation is compatible.
In \S\ref{sec:central},
we  give a closed formula for tau function  
in terms of the determinant of the normalized spectral Killing field.
  
\section{Formulas from Part \Rmnum{1}}\label{sec:formulas}
We recall the relevant formulas from Part \Rmnum{1} of the series. 
To avoid repetition, we only state the title of the formulas
and refer the reader to Part \Rmnum{1}.

In the structure equations recorded below, the equality sign ``$ = $'' would mean
``$ \equiv $" modulo the appropriate differential ideal.
The meaning will be clear from the context, and 
we shall omit the specific description of the details.

\two\one
\noi
[\tb{$\sla(2,\C)[[\lambda]]$-valued formal Killing field}]
\be\label{eq:tbY}
\tb{Y}=\bp -\im\tb{a}&2\tb{c}\\ 2\tb{b}&\im\tb{a} \ep,  
\ee
\be
\tb{a}=\sum_{n=0}^{\infty}\lambda^{2n}a^{2n+1},\qquad
\tb{b}=\sum_{n=0}^{\infty}\lambda^{2n+1}b^{2n+2},\qquad
\tb{c}=\sum_{n=0}^{\infty}\lambda^{2n+1}c^{2n+2}.\n
\ee

\be\label{eq:b2c2}
b^2=-\im\gamma h_2^{-\frac{1}{2}}, \quad c^2=\im h_2^{\frac{1}{2}}, \quad
\det(\bY)=-4b^2 c^2\lambda^2=-4\gamma\lambda^2.
\ee

\noi
[\tb{Recursive structure equation for the coefficients of  $\bY$ \tn{(for the CMC system)}}]
\begin{align}\label{eq:abcstrt}
\ed  a^{2n+1}&=  (\im \gamma c^{2n+2}+\im h_2 b^{2n+2})\xi + (\im \gamma  b^{2n}+\im \hb_{2} c^{2n})\xib, \\
\ed  b^{2n+2} - \im  b^{2n+2} \rho&= \frac{\im \gamma }{2} a^{2n+3} \xi + \frac{\im}{2}  \hb_{2}a^{2n+1} \xib ,\n\\
\ed  c^{2n+2} +\im c^{2n+2}  \rho&= \frac{\im}{2}  h_2 a^{2n+3}\xi  + \frac{ \im \gamma }{2} a^{2n+1} \xib.\n
\end{align}
  
\noi
[\tb{Decomposition of $\bY$}]
\be\label{eq:YUm}
\bY=2\im\lambda^{2m+2} (U_m+U_{(m+1)}).
\ee
\be
U_m =\bp -\im U_m^a &2U_m^c \\ 2U_m^b&\im U_m^a\ep,\n
\ee
\begin{align*}
U_m^a&=\frac{1}{2\im}\sum_{j=0}^m\lambda^{(2j+0)-(2m+2)}a^{2j+1}, \quad
U_m^c =\frac{1}{2\im}\sum_{j=0}^m\lambda^{(2j+1)-(2m+2)}c^{2j+2},\\
U_m^b&=\frac{1}{2\im}\sum_{j=0}^m\lambda^{(2j+1)-(2m+2)}b^{2j+2}. \n
\end{align*}

\noi
[\tb{$\sla(2,\C)[[\lambda^{-1},\lambda]]$-valued Maurer-Cartan form}]
\begin{align}\label{eq:phi2} 
\phi_+&=\bp \cdot& -\frac{1}{2}\gamma \xib\\ \frac{1}{2} \hb_2 \xib &\cdot \ep,\qquad
\phi_0 =\bp \frac{\im}{2}\rho &\cdot \\ \cdot&-\frac{\im}{2}\rho\ep, \qquad
\phi_- =\bp \cdot& -\frac{1}{2}  h_2 \xi\\  \frac{1}{2}\gamma \xi&\cdot \ep, \\
\phi_{\lambda}&=\lambda\phi_+ + \phi_0 + \lambda^{-1}\phi_-
=\bp  \frac{\im}{2}\rho &-\lambda\frac{1}{2}\gamma \xib-\lambda^{-1}\frac{1}{2}  h_2 \xi \n \\
\lambda \frac{1}{2} \hb_2 \xib+ \lambda^{-1}\frac{1}{2}\gamma \xi  &-\frac{\im}{2}\rho\ep.\n
\end{align}
\begin{align}\label{eq:extphi2}
\bphi =\bphi_++\bphi_0+\bphi_-  
:&=-\sum_{m=0}^{\infty}\ol{U}^t_m\ed t_{\ol{m}}+\phi_{0}+\sum_{m=0}^{\infty}U_m\ed t_m   \\
&=-\sum_{m=1}^{\infty}\ol{U}^t_m\ed t_{\ol{m}}+\phi_{\lambda}+\sum_{m=1}^{\infty}U_m\ed t_m.\n
\end{align}

\noi
[\tb{Structure equation for CMC hierarchy}]
\be
\label{Eqbphi}\left\{
\begin{array} {rl}
\ed\xi-\im\rho\w\xi     &=\sum_{m=1}^{\infty} a^{2m+3}\ed t_m\w\xi, \\
\ed\xib+\im\rho\w\xib &=\sum_{m=1}^{\infty} \ab^{2m+3}\ed \tba_{m}\w\xib,   \\
\ed\rho &\equiv R\frac{\im}{2}\xi\w\xib  \qquad \mod \ed\bbt, \ed\ol{\bbt},\\
\ed h_2+2\im h_2\rho&=h_3\xi -2 \sum_{m=1}^{\infty}h_2 a^{2m+3}\ed t_m,\\
\ed \hb_2-2\im \hb_2\rho &=\hb_3\xib-2 \sum_{m=1}^{\infty}\hb_2 \ab^{2m+3}\ed \tba_{m}.
\end{array}\right.\ee
\begin{align}\label{eq:Ybphi}
\ed\tb{Y}+[\bphi,\tb{Y}]&=0,  \\
\ed\bphi+\bphi\w\bphi&=0. \n
\end{align}

\noi
[\tb{Twisted loop algebra}]
\begin{align}
\g&=\sla(2,\C),\n\\
\g((\lambda)):&=\{\tn{$\g$-valued formal Laurent series in $\lambda$}\},\n\\
\label{eq:twisted}
\bm{\g}:&=\left\{   \, h(\lambda)\in \g((\lambda))\;\vert\;\,\tn{$h^1_1(\lambda)=-h^2_2(\lambda)$ is even in $\lambda$; $h^1_2(\lambda), h^2_1(\lambda)$ are odd in $\lambda$}     \,   \right\}.
\end{align}

\noi
[\tb{Decomposition of $\bmg$}]
\be\begin{array}{rll}\label{eq:vsdecomp}
\bm{\g}&=\;\bm{\g}_{\leq -1} &+^{vs}\quad \bm{\g}_{\geq 0} \\
&\subset\;\g[\lambda^{-1}]\lambda^{-1}&+^{vs}\quad\g[[\lambda]]. 
\end{array}\ee

\be\begin{array}{rll}\label{eq:vsdecompdual}
\bm{\g}^*=\bm{\g}
& =\;\bm{\g}_{\geq  1} &+^{vs}\quad \bm{\g}_{\leq 0}\\
&\subset\;\g[[\lambda]]\lambda &+^{vs}\quad\g[\lambda^{-1}].  \\
\end{array}\ee

\noi
[\tb{Lie groups}]
\begin{center}
 $\bm{G}, \bm{G}_{\leq -1}, \bm{G}_{\geq 1}, 
\bm{G}_{\leq 0}, \bm{G}_{\geq 0}$: (formal) Lie groups for  $\bmg, \bmg_{\leq -1}, \bmg_{\geq 1},
\bmg_{\leq 0}, \bmg_{\geq 0}.$
\end{center}

\sub{Truncated CMC hierarchy}\label{sec:truncated}
The construction of the extended CMC hierarchy will require that 
the base CMC hierarchy is $ \ol{\bbt}, \bbt$-truncated as follows;
once and for all, set the non-negative integer $N\geq 0$ and assume
$$
\fbox{$\qquad\tba_m, t_m =0,\quad \forall \,m\geq N+1.\qquad$}
$$ 
Under this condition, the CMC hierarchy is called \emph{ $\tba_N, t_N$-truncated}.
From now on, we will only consider the truncated CMC hierarchy.

Note in this case that 
the $\lambda$-degree  of $\bphi$ is bounded in the interval 
$$[-(2N+1), 2N+1].$$
 
\section{Dressing}\label{sec:dressing}
In this section,  we start by introducing a canonical dressing transformation
which renders the CMC hierarchy 
into a completely integrable (Frobenius)  system of 
constant coefficient linear partial differential equations, 
see  \cite{Mulase1984} for the related  work on KP hierarchy.
This leads to the wave function for the CMC hierarchy. 

The wave function formulation will play an important role 
in finding the additional Killing fields, \S\ref{sec:addKilling}.
\sub{Dressing transformation}\label{sec:dressingtransform}
Consider the decomposition \eqref{eq:YUm} of the formal Killing field $\bY$,
and the formula \eqref{eq:extphi2} for the Maurer-Cartan form $\bphi$.
\subb{Maurer-Cartan form for dressing}
Set
\be\label{eq:cbphi} 
\cbphi:=\bphi_++\bphi_0-\sum_{m=0}^{\infty}U_{(m+1)}\ed t_m.
\ee
Note that $\cbphi$ is $\bmg_{\geq 0}$-valued. 
By \eqref{eq:YUm},  we  have the identity
\be\label{eq:Yidentity} 
\bphi-\cbphi=\bY\balpha,
\ee
where
$$\balpha:= \frac{1}{2\im}\sum_{m=0}^{\infty}\lambda^{-(2m+2)}\ed t_m.
$$
Note that $\ed\balpha=0.$
\begin{lem}\label{eq:cbphicompat}
The $\bmg_{\geq 0}$-valued 1-form $\cbphi$ satisfies the Maurer-Cartan equation
$$\ed\cbphi+\cbphi\w\cbphi=0.
$$
\end{lem}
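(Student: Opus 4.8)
The plan is to derive the Maurer–Cartan equation for $\cbphi$ from the known Maurer–Cartan equation for $\bphi$ (equation \eqref{eq:Ybphi}) together with the Killing field equation $\ed\bY+[\bphi,\bY]=0$, using only the algebraic identity $\bphi-\cbphi=\bY\balpha$ from \eqref{eq:Yidentity} and the fact that $\balpha$ is a closed scalar-valued $1$-form.

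First I would substitute $\cbphi=\bphi-\bY\balpha$ into $\ed\cbphi+\cbphi\w\cbphi$ and expand. The exterior derivative term gives $\ed\bphi-\ed(\bY\balpha)=\ed\bphi-(\ed\bY)\w\balpha$, since $\ed\balpha=0$; note $\balpha$ is an ordinary (scalar) $1$-form so $\ed\bY\balpha$ means $(\ed\bY)\balpha$ with the usual Leibniz sign, i.e. $\ed(\bY\balpha)=(\ed\bY)\w\balpha$ because $\bY$ is a $0$-form (matrix-valued function). For the quadratic term, $\cbphi\w\cbphi=(\bphi-\bY\balpha)\w(\bphi-\bY\balpha)=\bphi\w\bphi-\bphi\w(\bY\balpha)-(\bY\balpha)\w\bphi+(\bY\balpha)\w(\bY\balpha)$. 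The last term vanishes because $\balpha\w\balpha=0$ for a scalar $1$-form. Collecting wedge signs carefully — $\balpha$ commutes past the matrix entries but anticommutes as a form — the two cross terms combine to $-[\bphi,\bY]\w\balpha$ (up to the sign bookkeeping of moving $\balpha$ through), which one writes as $-[\bphi,\bY]\,\balpha$ in the convention of the paper where structure equations are read modulo sign conventions already fixed in Part I.

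Then I would assemble: $\ed\cbphi+\cbphi\w\cbphi=(\ed\bphi+\bphi\w\bphi)-\big((\ed\bY)+[\bphi,\bY]\big)\w\balpha=0+0=0$, using both parts of \eqref{eq:Ybphi}. The remaining point to check is that $\cbphi$ is genuinely $\bmg_{\geq 0}$-valued, which is already asserted in the text right after \eqref{eq:cbphi}: $\bphi_+$ and $\bphi_0$ have nonnegative $\lambda$-degree, and each $U_{(m+1)}$ is a series in nonnegative powers of $\lambda$ by construction (the tail of the decomposition \eqref{eq:YUm}), so no negative powers of $\lambda$ survive; hence the Maurer–Cartan equation is an identity among $\bmg_{\geq 0}$-valued forms and the statement follows.

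**The main obstacle** is purely one of sign and convention: the excerpt warns that ``$=$'' means ``$\equiv$ modulo the appropriate differential ideal,'' and that the structure equations \eqref{Eqbphi}, \eqref{eq:Ybphi} are stated in a fixed convention from Part I. The care needed is to track how the scalar form $\balpha$ (which anticommutes with $1$-forms but commutes with matrices) interacts with the matrix commutator $[\bphi,\bY]$, so that the cross terms $-\bphi\w\bY\balpha-\bY\balpha\w\bphi$ really do collapse to $-[\bphi,\bY]\balpha$ with the correct overall sign matching the convention under which \eqref{eq:Ybphi} holds. Once that bookkeeping is pinned down, the proof is a three-line substitution with no analytic content; in the truncated setting the sums are finite, so there are no convergence issues to address.
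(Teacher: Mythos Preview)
Your proposal is correct and follows exactly the paper's approach: substitute $\cbphi=\bphi-\bY\balpha$, expand, and cancel using the Maurer--Cartan equation $\ed\bphi+\bphi\w\bphi=0$ together with the Killing field equation $\ed\bY+[\bphi,\bY]=0$ (both from \eqref{eq:Ybphi}), plus $\ed\balpha=0$ and $\balpha\w\balpha=0$. The paper's proof is the same two-line computation; your discussion of the sign bookkeeping for moving the scalar form $\balpha$ past matrix $1$-forms is the only elaboration, and it resolves exactly as you anticipate.
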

\begin{proof}
From the equation $\cbphi=\bphi-\bY\balpha$, 
$$\ed\cbphi+\cbphi\w\cbphi=\left(-\bphi\w\bphi+ (\bphi\bY-\bY\bphi)\w\balpha\right)
+\left(\bphi\w\bphi-\bphi\w\bY \balpha-\bY\balpha\w\bphi\right)=0.
$$
\end{proof}
\subb{Dressing transformation}
Let $S$ be a $\bm{G}_{\geq 0}$-valued frame for $\cbphi$, 
which satisfies the equation\ftmark\fttext{Here 
we need the assumption that the CMC hierarchy is $\ol{\bbt}, \bbt$-truncated.}
$$S^{-1}\ed S=\cbphi.
$$
The frame $S$ is determined up to left multiplication by $\bm{G}_{\geq 0}$.

Consider the dressing transformation of the  Maurer-Cartan form $\bphi$
by the  $\bm{G}_{\geq 0}$-frame $S^{-1}$.
We have
\begin{align}
S\bphi S^{-1}-(\ed  S) S^{-1}
&=S\left(\bphi -S^{-1}\ed S\right)S^{-1}\n\\
&=S\left(\bphi-\cbphi\right)S^{-1}\n\\
&=S \bY S^{-1}\balpha=\bZ\balpha, 
\end{align}
where $\bZ:=S\bY S^{-1}.$ 
\begin{lem}
Under the dressing by $S^{-1}$,
\begin{enumerate}[\qquad a)]
\item
the structure equation for the CMC hierarchy transforms as
$$\ed W=W\bphi\quad \lra\quad \ed w=w \bZ\balpha.$$
\item
the Killing field equation transforms as
$$\quad\ed \bP+[\bphi,\bP]=0\quad \lra \quad\ed\bQ+[\bZ\balpha,\bQ]=0.$$
\item
the formal Killing field $\bY$ transforms to
a constant element $\bZ=S\bY S^{-1} \in \bmg_{\geq 1}$.
\end{enumerate}
\end{lem}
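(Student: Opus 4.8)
The plan is to verify the three claims directly, exploiting the decompositions already in hand. Parts (a) and (b) are essentially formal consequences of the definition $W \mapsto w = SW$ (equivalently the dressing $\bphi \mapsto S\bphi S^{-1} - (\ed S)S^{-1}$), together with the identity $S\bphi S^{-1} - (\ed S)S^{-1} = \bZ\balpha$ established just above the lemma. For (a): if $\ed W = W\bphi$, set $w := SW$; then $\ed w = (\ed S)W + S\,\ed W = (\ed S)S^{-1}w + S\bphi S^{-1} w = \big((\ed S)S^{-1} + S\bphi S^{-1}\big)w$. But wait---the gauge here acts on the left of $W$, so I should instead compute using the dressed Maurer--Cartan form; writing $w := S W$ gives $w^{-1}\ed w = W^{-1}(S^{-1}\ed S)W + W^{-1}\ed W = W^{-1}\cbphi W + \bphi$, which is not quite it. The cleaner route is to note that the dressing transformation of $\bphi$ is precisely $\bphi \mapsto \bphi - \cbphi = \bY\balpha$ after conjugation by $S$, i.e. the new connection form is $\bZ\balpha$ by the displayed computation $S\bphi S^{-1} - (\ed S)S^{-1} = \bZ\balpha$. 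Thus the frame $w$ defined by $w := S W$ (so $\ed W = W\bphi \Leftrightarrow$ after the change of frame) satisfies $\ed w = w\,\bZ\balpha$; one just reads this off. For (b): a Killing field $\bP$ for $\bphi$ means $\ed\bP + [\bphi,\bP] = 0$; conjugating, set $\bQ := S\bP S^{-1}$, and compute
\begin{align}
\ed\bQ &= (\ed S)\bP S^{-1} + S(\ed\bP)S^{-1} - S\bP S^{-1}(\ed S)S^{-1} \n\\
&= (\ed S)S^{-1}\bQ - \bQ(\ed S)S^{-1} - S[\bphi,\bP]S^{-1} \n\\
&= [(\ed S)S^{-1}, \bQ] - [S\bphi S^{-1}, \bQ] = -[S\bphi S^{-1} - (\ed S)S^{-1},\, \bQ] = -[\bZ\balpha,\bQ], \n
\end{align}
which is the asserted equation. (The sign bookkeeping using $S^{-1}\ed S = \cbphi$ versus $(\ed S)S^{-1}$ is the only place to be careful.)

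For (c), that $\bZ = S\bY S^{-1}$ is a \emph{constant} element of $\bmg_{\geq 1}$: constancy follows from the Killing field equation $\ed\bY + [\bphi,\bY] = 0$ applied to case (b) with $\bP = \bY$, giving $\ed\bZ + [\bZ\balpha,\bZ] = \ed\bZ + [\bZ,\bZ]\balpha = \ed\bZ = 0$ since $\balpha$ is scalar-valued and $[\bZ,\bZ]=0$. So $\bZ$ is closed, hence constant. That $\bZ \in \bmg_{\geq 1}$ is the structural point: $S$ is $\bm{G}_{\geq 0}$-valued, so conjugation by $S$ preserves $\bmg_{\geq 0}$-ness of things, but one must show the $\lambda^0$-component of $\bZ$ vanishes. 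Here I would use that $\bY = 2\im\lambda^{2m+2}(U_m + U_{(m+1)})$ has its lowest-order part $\bY_{(0)} \sim \lambda$-degree $1$ (indeed $\bY$'s leading coefficient is $\lambda^1$ times the off-diagonal $b^2, c^2$ block), together with the fact that conjugating by the $\bm{G}_{\geq 0}$-frame $S = (I_2 + S_1\lambda + \cdots)\,\mathrm{diag}(h_2^{-1/4}, h_2^{1/4})$ can only raise, never lower, the $\lambda$-degree. Combined with the twisted parity constraint defining $\bmg$ (diagonal even, off-diagonal odd in $\lambda$), the $\lambda^0$ term is forced to be zero, so $\bZ \in \bmg_{\geq 1}$; in fact one expects $\bZ$ to be the explicit constant $\begin{pmatrix} \cdot & 2\im \\ -2\im\gamma & \cdot \end{pmatrix}\lambda$ recorded later, with $\det(\bZ) = -4\gamma\lambda^2$ matching $\det(\bY)$.

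The main obstacle is the last point: pinning down that $\bZ$ lands in $\bmg_{\geq 1}$ rather than merely $\bmg_{\geq 0}$, i.e. that the $\lambda^0$-component is killed. This needs the precise leading behavior of both $\bY$ (lowest $\lambda$-degree $=1$) and the dressing frame $S$ (expansion starting at $I_2$ at $\lambda = 0$, with the diagonal normalization by $h_2^{\mp 1/4}$), plus the parity/twisting condition. Everything else is bookkeeping: the conjugation identities, the use of $\ed\balpha = 0$ and the scalar nature of $\balpha$, and the translation of $\ed W = W\bphi$ under the change of frame.
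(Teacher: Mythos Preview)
Your approach is correct and matches the paper's, which only spells out part (c) explicitly: apply the dressed Killing field equation to $\bY$ itself to get $\ed\bZ + [\bZ\balpha,\bZ] = \ed\bZ = 0$, and then observe $\bY\vert_{\lambda=0}=0$ so $\bZ\vert_{\lambda=0}=0$, placing $\bZ\in\bmg_{\geq 1}$. Two small cleanups: in (a) the correct substitution is $w := WS^{-1}$ (right-multiplication, consistent with the convention $\ed W = W\bphi$ and $S^{-1}\ed S=\cbphi$), which gives $\ed w = w\big(S\bphi S^{-1}-(\ed S)S^{-1}\big)=w\,\bZ\balpha$ in one line and avoids the detour you took; and for (c), your degree-counting argument is fine but the paper's version is the shorter equivalent---just evaluate at $\lambda=0$ rather than invoking parity and the expansion of $S$.
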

\begin{proof}
Since $\bY$ satisfies the Killing field equation $\ed\bY+[\bphi,\bY]=0$,
$\bZ$ satisfies the dressed equation
$$\ed\bZ+[\bZ\balpha, \bZ]=\ed\bZ=0.
$$
Note $\bY\vert_{\lambda=0}=0,$ and hence $\bZ\vert_{\lambda=0}=0.$
\end{proof}
Note that, since $S$ is determined up to left multiplication by $\bm{G}_{\geq 0}$,
$\bZ$ is determined up to conjugation by $\bm{G}_{\geq 0}.$
\begin{rem}\label{rem:detZ}
\be\label{eq:detZ}
\det(\bZ)=\det(\bY)=-4\gamma\lambda^2.
\ee
\end{rem}
\sub{Wave function}\label{sec:wavefunction}
Let 
\be\label{eq:ttt}
\ttt:=\int\balpha=\frac{1}{2\im}\sum_{m=0}^{\infty} \lambda^{-(2m+2)}t_m.
\ee
Set 
\be\label{eq:bzeta}
\bzeta:= \int \bZ\balpha=\bZ\ttt 
\ee be an anti-derivative for $\bZ\balpha$.
Note $\del_{\tba_n} \bzeta=0, \;\forall \,n\geq 0.$
\begin{defn}\label{defn:wave}
The \tb{wave function} (or Baker-Akhiezer function) for the CMC hierarchy is
$$W:= e^{\bzeta} S.
$$
\end{defn}
\begin{thm}\label{thm:dressing}
The wave function $W$ satisfies the structure equation for the CMC hierarchy,
\be\label{eq:Wstrt}
\ed W=W\bphi.
\ee
\end{thm}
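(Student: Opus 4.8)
The plan is to verify $\ed W = W\bphi$ by differentiating the product $W = e^{\bzeta}S$ directly, feeding in the structure equations already recorded. The ingredients are all in place: $\ed S = S\cbphi$ from the definition of the frame in \S\ref{sec:dressingtransform}; the fact that $\bZ = S\bY S^{-1}$ is a \emph{constant} $\bmg_{\geq 1}$-valued element; the closedness $\ed\balpha = 0$ together with $\ed\ttt = \balpha$ from \eqref{eq:ttt} and $\bzeta = \bZ\ttt$ from \eqref{eq:bzeta}; and the splitting $\bphi = \cbphi + \bY\balpha$ from \eqref{eq:Yidentity}. Before differentiating I would note that, because the hierarchy is $\ol{\bbt},\bbt$-truncated, $\ttt = \frac{1}{2\im}\sum_{m=0}^{N}\lambda^{-(2m+2)}t_m$ is a finite sum, so $\bzeta$ is a genuine $\bmg$-valued expression and $e^{\bzeta}$ is a well-defined (formal) $\bm{G}$-valued object; the frame $S$ itself exists by Lemma \ref{eq:cbphicompat}.

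The computation then runs as follows. Since $\bzeta$ is a form-valued scalar multiple of the single fixed matrix $\bZ$, it commutes with its own differential, so $\ed(e^{\bzeta}) = e^{\bzeta}\,\ed\bzeta$; and $\ed\bzeta = \ed(\bZ\ttt) = \bZ\,\ed\ttt = \bZ\balpha$ because $\bZ$ is constant. Applying the Leibniz rule,
$$\ed W = (\ed e^{\bzeta})\,S + e^{\bzeta}\,\ed S = e^{\bzeta}\,\bZ\balpha\,S + e^{\bzeta}\,S\cbphi.$$
Now $\bZ S = S\bY$ (immediate from $\bZ = S\bY S^{-1}$), and the scalar $1$-form $\balpha$ commutes through matrices, so $\bZ\balpha\,S = \bZ S\,\balpha = S\,\bY\balpha$. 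Hence
$$\ed W = e^{\bzeta}S\,(\bY\balpha + \cbphi) = e^{\bzeta}S\,\bphi = W\bphi$$
by \eqref{eq:Yidentity}, which is exactly \eqref{eq:Wstrt}.

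The only genuine subtlety — the step I would spell out with care — is the interchange $\ed(e^{\bzeta}) = e^{\bzeta}(\ed\bzeta)$: in general one only has $\ed(e^{A}) = \int_{0}^{1} e^{sA}(\ed A)\,e^{(1-s)A}\,ds$, and this collapses to $e^{A}\,\ed A$ precisely when $[A,\ed A] = 0$. Here that bracket vanishes because both $\bzeta = \bZ\ttt$ and $\ed\bzeta = \bZ\balpha$ are proportional, through commuting scalar/form coefficients, to the one constant matrix $\bZ$. Everything else is bookkeeping with the identities established above, and I do not anticipate any further difficulty.
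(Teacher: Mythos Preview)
Your proof is correct and follows essentially the same route as the paper: the paper's one-line proof simply asserts that $e^{\bzeta}$ solves the dressed equation $\ed e^{\bzeta} = e^{\bzeta}\bZ\balpha$ and then appeals to the dressing equivalence in the preceding Lemma, whereas you unpack that equivalence explicitly via the Leibniz rule and the identity $\bphi = \cbphi + \bY\balpha$. Your added remark justifying $\ed(e^{\bzeta}) = e^{\bzeta}\,\ed\bzeta$ through $[\bzeta,\ed\bzeta]=0$ is a worthwhile clarification that the paper leaves implicit.
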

\begin{proof}
The exponential factor $e^{\bzeta}$ is a solution to the dressed structure equation (by $S^{-1}$),
$$ \ed e^{\bzeta} = e^{\bzeta} \bZ\balpha.$$
\end{proof}
\sub{Normalization of $\bZ$}\label{sec:normalization}
We record  the   asymptotics of the dressing matrix $S$ and the dressed formal Killing field $\bZ$
at $\lambda=0$. This will lead to a  normalization of $\bZ$.
\subb{Asymptotics}
From  \eqref{eq:cbphi} and the structure equation \eqref{Eqbphi},
one finds that
$$\cbphi\vert_{\lambda=0}=\bp \ed\log(h_2^{-\frac{1}{4}})&\cdot\\ \cdot &\ed\log(h_2^{+\frac{1}{4}})\ep.$$
Consider the expansion
$$S=\Big( I_2+S_1\lambda+\mco(\lambda^2)\Big)\bp  h_2^{-\frac{1}{4}} 
&\cdot\\ \cdot & h_2^{+\frac{1}{4}} \ep,$$
where $I_2$ denotes the 2-by-2 identity matrix.
Substitute this to $\bZ=S\bY S^{-1}$, and one finds
$$\bZ= Z\lambda+\mco(\lambda^2),$$
where
$$Z:=\bp \cdot & 2\im \\-2\im\gamma&\cdot\ep$$
is the leading coefficient matrix.
The constants $2\im, -2\im\gamma$ in $Z$ follow from  \eqref{eq:b2c2}.
\subb{Normalization of $\bZ$}
Recall that  $\bZ$ is determined up to conjugation by $\bm{G}_{\geq 0}$.
Since the leading coefficient matrix $Z$ is non-degenerate,  
one may normalize $\bZ$ by  the formal adjoint action by $\bm{G}_{\geq 1}$ 
(to keep the leading term $Z$ unchanged)
such that 
\be\label{eq:Znormal}\bZ= Z\lambda.\ee
We assume this normalization of $\bZ$ from now on.

\section{Additional Killing fields}\label{sec:addKilling} 
We determine a pair of  (formal) solutions to the Killing field equation 
for each $\cbphi$ and $\bphi$ via an elementary ansatz based on
the eigen-decomposition for the adjoint operator $\tn{ad}_{\bY}$.  

The formal Killing field $\bY$ and these additional Killing fields satisfy 
the $\sla(2,\C)$-type of  Lie bracket relations, \eqref{eq:YBRVV}, \eqref{eq:YPP}.
This will lead to an essentially algebraic formula 
for the normalized spectral Killing field, \S\ref{sec:spectral}.
  
\sub{Killing fields for $\cbphi$}
We first determine the  Killing fields for $\cbphi.$
The  Killing fields  for $\bphi$ will be obtained from these by a certain linear transformation.

Observe that
\be\label{eq:Ycbphi}
\ed\bY+[\cbphi,\bY]=\ed\bY+[\bphi-\bY\alpha,\bY]=0,\ee
and $\bY$ is also a Killing field for $\cbphi.$
\subb{Ansatz}
Considering the eigen-matrices of the operator $\ad_{\bY}$ on $\bmg$, 
set  
\begin{align}\label{eq:VVpm}
\bV_+&=\bp  2u\bbb\bbc&\im u \bba\bbc-2\sqrt{\gamma}v\bbc \\ 
\im u \bba\bbb+2\sqrt{\gamma}v\bbb&-2u\bbb\bbc\ep, \\
\bV_-&=\bp  2v\bbb\bbc& \im v \bba\bbc-2\sqrt{\gamma}u\bbc\lambda^2 \\ 
 \im v \bba\bbb+2\sqrt{\gamma}u\bbb\lambda^2& -2v\bbb\bbc\ep.\n
\end{align}
The coefficients $u, v$, which are to be determined, 
are $\C[[\lambda^2]]$-valued (even) functions.

The set of $\bmg_{\geq 0}$-valued functions $\{\bY, \bV_{\pm}\}$ 
satisfy the following Lie-bracket relations,
which resemble the structure equation of $\sla(2,\C)$:
\begin{align}\label{eq:YBRVV}
[\bY,\bV_+]&= 4\sqrt{\gamma}  \bV_-, \qquad
[\bY,\bV_-]= 4\sqrt{\gamma}\lambda^2 \bV_+, \\
&\quad[\bV_+,\bV_-]=\mu\sqrt{\gamma} \bY,\n
\end{align}
where 
$$\mu=4\bbb\bbc(v^2-u^2\lambda^2).$$ 
It will be shown that $\mu$ is necessarily a constant element in  $\C[[\lambda^2]]$.

Note also that
$$\det(\bV_+)=4\gamma\bbb\bbc(v^2-u^2\lambda^2),\qquad
\det(\bV_-)=-4\gamma\bbb\bbc\lambda^2(v^2-u^2\lambda^2).$$
\begin{rem} 
Note $\tr(\bY\bV_{\pm}), \tr(\bV_+\bV_-)=0.$
\end{rem}
\subb{Associated  linear differential system}
Substitute \eqref{eq:VVpm} to the  Killing field equation for $\cbphi$,
$$\ed\bV_{\pm}+[\cbphi,\bV_{\pm}]=0.$$
After collecting terms,
this reduces to the following  linear system of differential equations for $(u, v)$:
\be\label{eq:EQuv}
\ed(u, v)=(u,v)\Omega,
\ee
where 
\be\label{eq:Omega}
\Omega=\frac{1}{\bbb\bbc}\begin{bmatrix}
\frac{\im}{2}\bba\left(\bbb\cbphi^1_2-\bbc\cbphi^2_1\right) &
-\sqrt{\gamma}\lambda^2 \left(\bbb\cbphi^1_2+\bbc\cbphi^2_1\right) \\
-\sqrt{\gamma} \left(\bbb\cbphi^1_2+\bbc\cbphi^2_1\right)&
 \frac{\im}{2}\bba\left(\bbb\cbphi^1_2-\bbc\cbphi^2_1\right)\end{bmatrix}.
\ee
Here $\cbphi^i_j$ denotes the $(i,j)$-component of  $\cbphi$.

It is easily checked from this that  $\ed \mu=0$.
   
\two
We proceed to solve \eqref{eq:EQuv}.
The matrix valued 1-form $\Omega$ can be decomposed as
$$\Omega=\bp 1&\cdot\\ \cdot&1\ep \btheta^+
+\bp \cdot &1 \\ \lambda^{-2} &\cdot \ep\btheta^-,$$
where
\begin{align}\label{eq:bthetapm}
\btheta^+&:=\frac{\im}{2}\bba\left(\frac{\bbb\cbphi^1_2-\bbc\cbphi^2_1}{\bbb\bbc}\right),  \\
\btheta^-&:=-\sqrt{\gamma}\lambda^2\left(\frac{\bbb\cbphi^1_2+\bbc\cbphi^2_1}{\bbb\bbc}\right).\n
\end{align}
Note that $\btheta^{\pm}$ are both even as a function of $\lambda$.

Eq.\eqref{eq:Ycbphi} and the compatibility of $\cbphi$  show that (we omit the details of computation)
$$\ed\btheta^{\pm}=0.$$  
Since $\Omega\w\Omega=0$ and $[\Omega, \int\Omega]=0$, consider
$$g=\exp(\int\Omega).$$
Then the matrix valued function $g$ solves the equation $$\ed g=g\Omega.$$
Up to left multiplication of $g$ by $\bm{G}_{\geq 0}$,  
the desired 2-dimensional space of solutions $(u,v)$ for \eqref{eq:EQuv} 
are generated  by the rows of $g$.
 
Set
$$\bsigma^{\pm}:=\int\btheta^{\pm}.$$
Then it is easily checked that
\be\label{eq:gformula}
g=\exp(\int\Omega)=e^{\bsigma^+}
\bp\cosh(\lambda^{-1}\bsigma^-)&\lambda\sinh(\lambda^{-1}\bsigma^-) \\ 
\lambda^{-1}\sinh(\lambda^{-1}\bsigma^-)& \cosh(\lambda^{-1}\bsigma^-)\ep.
\ee
 
\subb{Local function $\bsigma^+$}
\begin{lem}\label{lem:sigmap}
$$\btheta^+=-\frac{1}{2}\ed\log(\bbb\bbc).$$
\end{lem}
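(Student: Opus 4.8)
The plan is to compute $\btheta^+$ directly from its definition \eqref{eq:bthetapm} by extracting the off-diagonal entries of $\cbphi$ and comparing with $\ed\log(\bbb\bbc) = \ed\bbb/\bbb + \ed\bbc/\bbc$. Recall $\bbb = \tb{b}$ and $\bbc = \tb{c}$ are the lower-left and upper-right entries of the formal Killing field $\bY$ (up to the factor of $2$), so their exterior derivatives are governed by the recursive structure equation \eqref{eq:abcstrt}, or more compactly by the Killing field equation $\ed\bY + [\bphi,\bY] = 0$ together with $\ed\bY + [\cbphi,\bY] = 0$ from \eqref{eq:Ycbphi}. The latter is the cleaner route: since $\cbphi$ is $\bmg_{\geq 0}$-valued and $\bY$ is a Killing field for $\cbphi$, reading off the $(1,2)$ and $(2,1)$ components of $\ed\bY = -[\cbphi,\bY]$ gives expressions for $\ed\bbc$ and $\ed\bbb$ purely in terms of $\cbphi^1_1, \cbphi^1_2, \cbphi^2_1$ and the entries $\bba, \bbb, \bbc$ of $\bY$.

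First I would write $\bY = \bp -\im\bba & 2\bbc \\ 2\bbb & \im\bba\ep$ and expand $[\cbphi, \bY]$ entrywise, using that $\cbphi$ is traceless with $\cbphi^1_1 = -\cbphi^2_2$. The $(2,1)$-entry of $-[\cbphi,\bY]$ yields $\ed(2\bbb) = -2\bbb\,\cbphi^2_2 \cdot(\pm) + (\text{terms in }\cbphi^2_1\,\bba) + 2\bbb\,\cbphi^1_1$, i.e. after simplification $\ed\bbb/\bbb = 2\cbphi^1_1 + (\text{a }\cbphi^2_1\bba/\bbb\text{ term})$; similarly the $(1,2)$-entry gives $\ed\bbc/\bbc = -2\cbphi^1_1 + (\text{a }\cbphi^1_2\bba/\bbc\text{ term})$. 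Adding these, the diagonal contributions $\pm 2\cbphi^1_1$ cancel, and I am left with $\ed\log(\bbb\bbc) = \im\bba\bigl(\frac{\cbphi^2_1}{\bbb} - \frac{\cbphi^1_2}{\bbc}\bigr)$ up to a universal constant, which is precisely $-2\btheta^+$ by the definition $\btheta^+ = \frac{\im}{2}\bba\bigl(\frac{\bbb\cbphi^1_2 - \bbc\cbphi^2_1}{\bbb\bbc}\bigr) = \frac{\im}{2}\bba\bigl(\frac{\cbphi^1_2}{\bbc} - \frac{\cbphi^2_1}{\bbb}\bigr)$. I should double-check the sign and the factor of $2$ carefully against the conventions in \eqref{eq:tbY} and \eqref{eq:cbphi}.

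The main obstacle I anticipate is bookkeeping the factors of $2$ and $\im$ correctly: the entries of $\bY$ carry coefficients $-\im, 2, 2, \im$, the definition of $\btheta^+$ has a $\frac{\im}{2}$ prefactor, and $\cbphi$ differs from $\bphi$ by the term $-\sum_m U_{(m+1)}\ed t_m$ — though since we only use the off-diagonal and diagonal structure (not the detailed $\lambda$-expansion), that correction term enters uniformly and does not affect the cancellation. A secondary check is that the identity holds modulo the differential ideal in the sense flagged at the start of \S\ref{sec:formulas}, i.e. $\ed\log(\bbb\bbc)$ is computed with the $\equiv$ convention; this is automatic since we derived it from \eqref{eq:Ycbphi}, which already lives in that quotient. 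Once the entrywise computation is in hand, the lemma follows by matching the two expressions and noting both sides are exact one-forms (consistent with $\ed\btheta^+ = 0$, already established), so no constant of integration ambiguity survives at the level of the $\ed\log$.
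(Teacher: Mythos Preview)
Your approach is correct and is exactly what the paper's one-line proof (``It follows from \eqref{eq:Ycbphi}'') is pointing to: read off the $(1,2)$ and $(2,1)$ entries of $\ed\bY=-[\cbphi,\bY]$, form $\ed\bbb/\bbb+\ed\bbc/\bbc$, watch the diagonal $\cbphi^1_1$-terms cancel, and match against the definition of $\btheta^+$. Your sign and factor worries are well placed but the computation goes through cleanly; the only thing to tighten is the vague ``$(\pm)$'' description in your middle paragraph---just write out the two off-diagonal entries of $[\cbphi,\bY]$ explicitly and the result drops out.
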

\begin{proof} 
It follows from \eqref{eq:Ycbphi}.
\end{proof}
Hence, up to translating $\bsigma^+$ by a constant element in $\C[[\lambda^2]]$, 
we may set
$$e^{\bsigma^+}=\frac{1}{2\sqrt{\bbb\bbc}}$$
so that
\be\label{eq:bsigma+normal}
 4 e^{2\bsigma^+}\bbb\bbc=1.\ee
We assume this normalization of $\bsigma^+$ from now on.
  
\subb{Non-local function $\bbp$}
Let us put $\bsigma^-=-\lambda^2\bbp$, where
\be\label{eq:bbp}
\ed\bbp:=
\sqrt{\gamma}\left(\frac{\bbb\cbphi^1_2+\bbc\cbphi^2_1}{\bbb\bbc}\right).
\ee
Expanding  as a series in $\lambda$, one finds
$$\ed\bbp=  \left( \frac { \left(  h_{{2}} b_{{4}}
-\gamma  c_{{4}} \right) \xi }{2\sqrt {\gamma}  }
+ \frac { \im \left(  h_{{2}}\hb_2+ {\gamma}^{2} \right) \xib
}{2\sqrt {\gamma}h_2^{\frac{1}{2}} }\right)+\mco(\lambda^2)\mod \ed \tb{t},\ed\ol{\tb{t}}.
$$
This suggests that $\ed\bbp$ contains all the higher-order conservation laws,  
and hence that $\bbp$ is indeed a non-local $\C[[\lambda^2]]$-valued function.
 
\subb{Formulas for $\bV_{\pm}$}
For definiteness, set
\begin{align}\label{eq:Vpm}
\bV_+&:=e^{\bsigma^+}
\left[ \begin {array}{cc} 2 \sinh \left( \lambda \bbp \right) \lambda^{-1}\bbb\bbc
& 2 \sqrt {\gamma}\cosh \left( \lambda \bbp \right)\bbc 
+\im \sinh \left( \lambda \bbp \right) \lambda^{-1}\bba\bbc
\\\noalign{\medskip}
 -2 \sqrt {\gamma}\cosh \left( \lambda \bbp \right) \bbb
+\im\sinh \left( \lambda \bbp \right)\lambda^{-1}  \bba\bbb
&-2 \sinh \left(  \lambda \bbp \right) \lambda^{-1}\bbb\bbc \end {array}
 \right] , \\
\bV_-&:=e^{\bsigma^+}
\left[ \begin {array}{cc} -2 \cosh \left( \lambda \bbp \right)  \bbb\bbc
& -2 \sqrt {\gamma}\sinh \left( \lambda \bbp \right)\lambda \bbc 
-\im \cosh \left(  \lambda \bbp \right)  \bba\bbc
\\\noalign{\medskip}
 2 \sqrt {\gamma}\sinh \left( \lambda \bbp \right) \lambda\bbb
-\im\cosh \left( \lambda \bbp \right)  \bba\bbb
&2 \cosh \left( \lambda \bbp \right)  \bbb\bbc \end {array}
 \right]. \n
\end{align}

Summarizing the analysis so far, 
the Killing fields for $\cbphi$ are determined as follows.
\begin{thm}\label{thm:VV}
Let $\bV_{\pm}$ be  given by \eqref{eq:Vpm}. Then
they satisfy the Killing field equation for $\cbphi$,
$$\ed\bV_{\pm}+[\cbphi,\bV_{\pm}]=0.$$
The set of three Killing fields $\{\, \bY, \bV_{\pm}\}$ generates the space of
$\bmg_{\geq 0}$-valued Killing fields for $\cbphi.$
\end{thm}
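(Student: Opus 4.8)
The first assertion merely repackages the construction carried out just above the statement, so the substantive content is the generating claim; I organize the plan in three steps followed by a remark on the main difficulty. \emph{Step 1 (that $\bV_\pm$ solve the Killing field equation for $\cbphi$).} By the reduction preceding the statement, a matrix of the form \eqref{eq:VVpm} satisfies $\ed\bV_\pm+[\cbphi,\bV_\pm]=0$ precisely when its coefficient pair $(u,v)$ solves the linear system \eqref{eq:EQuv}. The pair underlying \eqref{eq:Vpm}, namely $(u,v)=\bigl(e^{\bsigma^+}\lambda^{-1}\sinh(\lambda\bbp),\,-e^{\bsigma^+}\cosh(\lambda\bbp)\bigr)$, is obtained up to an overall sign from a row of $g=\exp(\int\Omega)$ once one substitutes $\bsigma^-=-\lambda^2\bbp$ together with the normalization $4e^{2\bsigma^+}\bbb\bbc=1$; since the rows of $g$ solve $\ed g=g\Omega$ by \eqref{eq:gformula}, this $(u,v)$ solves \eqref{eq:EQuv}, whence $\bV_\pm$ of \eqref{eq:Vpm} satisfy the Killing field equation for $\cbphi$. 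The bracket relations \eqref{eq:YBRVV} hold by construction of the ansatz from the eigen-matrices of $\ad_{\bY}$, and these --- together with $\tr(\bY\bV_\pm)=\tr(\bV_+\bV_-)=0$ and the determinant identities --- are confirmed by a direct $2\times 2$ computation; there $v^2-u^2\lambda^2=e^{2\bsigma^+}$ gives $\det(\bV_+)=\gamma$, $\det(\bV_-)=-\gamma\lambda^2$, and the constant $\mu=4\bbb\bbc(v^2-u^2\lambda^2)=1$.

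\emph{Step 2 (that the space of $\bmg_{\geq 0}$-valued $\cbphi$-Killing fields has rank three).} Since $\cbphi$ is Maurer-Cartan (Lemma \ref{eq:cbphicompat}), take the frame $S\in\bm{G}_{\geq 0}$ with $S^{-1}\ed S=\cbphi$ from \S\ref{sec:dressingtransform}. For any $\cbphi$-Killing field $\bP$ one has $\ed(S\bP S^{-1})=S(\ed\bP+[\cbphi,\bP])S^{-1}=0$, so $\bP\mapsto S\bP S^{-1}$ is a bijection from the $\bmg_{\geq 0}$-valued $\cbphi$-Killing fields onto the $\ed$-closed ($\lambda$-only) elements of the corresponding ``$\geq 0$'' subalgebra, $\Ad_S$ preserving the latter. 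By the decomposition \eqref{eq:vsdecomp} this subalgebra is a free module of rank three over the ring $\C[[\lambda^2]]$ of even power series in $\lambda$ --- one generator for the diagonal part and one for each off-diagonal slot --- so the $\bmg_{\geq 0}$-valued $\cbphi$-Killing fields form a free $\C[[\lambda^2]]$-module of rank three.

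\emph{Step 3 (that $\{\bY,\bV_+,\bV_-\}$ is a basis).} By Step 1 these are three elements of the module, and they are $\C[[\lambda^2]]$-linearly independent because the Gram matrix of $\{\bY,\bV_+,\bV_-\}$ with respect to the trace form on $\sla(2,\C)$ is $\mathrm{diag}\bigl(8\gamma\lambda^2,\,-2\gamma,\,2\gamma\lambda^2\bigr)$, which is invertible over $\C((\lambda^2))$ since $\gamma\neq 0$. To promote independence to generation one inspects lowest-order $\lambda$-coefficients under the bijection of Step 2: $\bY$ corresponds to $\bZ=Z\lambda$ with $Z$ non-degenerate, while $\bV_\pm$ correspond to $S\bV_\pm S^{-1}$ whose leading terms are read off from \eqref{eq:Vpm} by letting $\bbp\to 0$; checking that the three resulting leading matrices are $\C$-linearly independent, Nakayama's lemma forces $\{\bY,\bV_+,\bV_-\}$ to generate the rank-three module, which is the assertion. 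Equivalently, since $\ed\bP+[\cbphi,\bP]=0$ is a linear Frobenius system a Killing field is determined by its value at one point, and it then suffices that $\bY,\bV_+,\bV_-$ span the admissible initial values there --- again the same leading-coefficient check.

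\emph{Main difficulty.} Step 1 is routine substitution and $2\times 2$ algebra. The real work lies in Steps 2--3: excluding any $\bmg_{\geq 0}$-valued $\cbphi$-Killing field outside the $\C[[\lambda^2]]$-span of $\{\bY,\bV_\pm\}$, and upgrading ``linearly independent'' to ``generating''. Because the splitting \eqref{eq:vsdecomp} is only a vector-space decomposition, this cannot be read off formally and must be settled at the level of lowest-order $\lambda$-coefficients, where the non-degeneracy of $Z$ (equivalently $\det\bY=-4\gamma\lambda^2\neq 0$) and the normalization $4e^{2\bsigma^+}\bbb\bbc=1$ enter essentially.
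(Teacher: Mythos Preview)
Your Step~1 is correct and matches the paper exactly: the paper presents the theorem as a summary of the preceding construction (ansatz, reduction to the $(u,v)$-ODE, explicit solution $g$), so there is nothing to add for the first assertion.

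For the generation claim your route genuinely differs from the paper's. The paper's implicit argument is that any $\bmg_{\geq 0}$-valued Killing field decomposes, via the $\ad_{\bY}$-eigenspace picture that motivates the ansatz \eqref{eq:VVpm}, as a multiple of $\bY$ plus a matrix of the form \eqref{eq:VVpm}; the latter piece is governed by \eqref{eq:EQuv}, whose two-dimensional solution space is spanned by the rows of $g$, and one checks directly that those two rows, fed through the two templates in \eqref{eq:VVpm}, reproduce (up to sign) the paper's $\bV_+$ and $\bV_-$. Your route instead passes through the dressing $S$ to identify $\cbphi$-Killing fields with constant elements of $\bmg_{\geq 0}$, then argues module-theoretically. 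Both are valid; the dressing argument is cleaner conceptually and makes the rank-three statement transparent, while the ansatz argument stays closer to the explicit formulas actually used downstream.

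There is, however, a real wrinkle in your Step~3. The phrase ``letting $\bbp\to 0$'' does not compute the leading $\lambda$-coefficients of $S\bV_{\pm}S^{-1}$: expanding the $\sinh$ in \eqref{eq:Vpm}, the $\lambda^0$-diagonal of $\bV_+$ is proportional to $p_0:=\bbp|_{\lambda=0}$, and it vanishes exactly when $p_0=0$ at the chosen point. When that happens all three of $\bY,\bV_+,\bV_-$ lie in $\bmg_{\geq 1}$ there, the three leading matrices are \emph{not} $\C$-linearly independent in $\bmg_{\geq 0}/\lambda^2\bmg_{\geq 0}$, and your Nakayama step fails --- the Killing field $S^{-1}\bigl(\begin{smallmatrix}1&0\\0&-1\end{smallmatrix}\bigr)S$ is then outside their $\C[[\lambda^2]]$-span. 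The paper's ``generates'' should be read over $\C((\lambda^2))$ (equivalently: as a basis of the three-dimensional space of Killing fields over the field of even Laurent series), and for that your Gram-matrix computation already suffices; the Nakayama refinement is both unnecessary and, as you have stated it, not quite correct without an additional hypothesis on the integration constant of $\bbp$.
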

Note that $\bV_+$ is $\bmg_{\geq 0}$-valued, and $\bY, \bV_-$ are $\bmg_{\geq 1}$ valued.
 
\subb{Algebraic identities}
Note the following Lie-bracket relations,
\begin{align}\label{eq:YVV}
[\bY,\bV_+]&= 4\sqrt{\gamma}  \bV_-, \qquad
[\bY,\bV_-]= 4\sqrt{\gamma}\lambda^2 \bV_+, \\
&\quad[\bV_+,\bV_-]=( 4 e^{2\bsigma^+}\bbb\bbc)\sqrt{\gamma} \bY=\sqrt{\gamma} \bY.\n
\end{align}
In fact, these are implied by the following equations,
\begin{align}\label{eq:YVYV'}
\bY \bV_+= -\bV_+\bY&= 2\sqrt{\gamma}  \bV_-, \qquad
\bY \bV_- =-\bV_-\bY= 2\sqrt{\gamma}\lambda^2 \bV_+, \\
&\quad \bV_+ \bV_- =-  \bV_- \bV_+=\frac{1}{2}\sqrt{\gamma} \bY.\n
\end{align}
 
Note the determinant formulas,
\begin{align}\label{eq:VVdet}
\det(\bV_+)&= \gamma, \qquad\qquad
\det(\bV_-)= -\gamma \lambda^2, \\
&\det(\bY)=\bba^2-4\bbb\bbc=-4\gamma\lambda^2.\n
\end{align}
Note also the trace relations $$\tr(\bY\bV_{\pm}),\; \tr(\bV_+\bV_-)=0.$$
\subsection{Killing fields for $\pmb{\phi}$ }
A generating set of the (formal) solutions to the Killing field equation for $\bphi$
can be obtained from $\{ \bY, \bV_{\pm}\}$ by a linear transformation.

\two
Recall
$$\ttt=\frac{1}{2\im}\sum_{m=0}^{\infty}\lambda^{-(2m+2)}t_m =\int\balpha.$$
Set
\be\label{eq:PPpm}
\bp \bP_+ \\\bP_-\ep:=
\bp  
\cosh(4\sqrt {\gamma}\lambda\ttt)  &  -  \sinh(4\sqrt {\gamma}\lambda\ttt)\lambda^{-1}     \\
 -\sinh(4\sqrt {\gamma}\lambda\ttt)\lambda& \cosh(4\sqrt {\gamma}\lambda\ttt)       \\
\ep
\bp \bV_+ \\ \bV_-\ep.
\ee
A direct computation shows that they satisfy the Killing field equation for $\bphi$.
\begin{thm}\label{thm:PP}
Let $\bP_{\pm}$ be  given by \eqref{eq:PPpm}. 
Then they satisfy  (formally) the Killing field equation for $\bphi$,
$$\ed\bP_{\pm}+[\bphi,\bP_{\pm}]=0.$$
The set of three Killing fields $\{\, \bY, \bP_{\pm}\}$ generates 
the space of (formal) Killing fields for $\bphi$ which are of the form 
\be\label{eq:oftheform}
\bQ_0+ \cosh(4\sqrt {\gamma}\lambda\ttt)\bQ_+
+\sinh(4\sqrt {\gamma}\lambda\ttt)\lambda\bQ_-,\ee 
where $\bQ_0, \bQ_{\pm}$ are $\bmg_{\geq 0}$-valued.
\end{thm}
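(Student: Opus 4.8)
The first assertion---that $\bP_{\pm}$ as defined by \eqref{eq:PPpm} solve $\ed\bP_{\pm}+[\bphi,\bP_{\pm}]=0$---I would prove by direct substitution, using the already-established facts that $\bY,\bV_{\pm}$ are Killing fields for $\cbphi$ (Thm.\ref{thm:VV}), together with the identity $\bphi-\cbphi=\bY\balpha$ from \eqref{eq:Yidentity} and $\ed\balpha=0$. Concretely, writing $c=\cosh(4\sqrt\gamma\lambda\ttt)$, $s=\sinh(4\sqrt\gamma\lambda\ttt)$, and noting $\ed\ttt=\balpha$, one has $\ed c=4\sqrt\gamma\lambda\, s\,\balpha$ and $\ed s=4\sqrt\gamma\lambda\, c\,\balpha$. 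Then for $\bP_+=c\,\bV_+-s\lambda^{-1}\bV_-$,
\begin{align*}
\ed\bP_+ + [\bphi,\bP_+]
&= (\ed c)\bV_+ + c(\ed\bV_+) - (\ed s)\lambda^{-1}\bV_- - s\lambda^{-1}(\ed\bV_-) + [\cbphi+\bY\balpha,\,\bP_+]\\
&= (\ed c)\bV_+ - (\ed s)\lambda^{-1}\bV_- - c[\cbphi,\bV_+] + s\lambda^{-1}[\cbphi,\bV_-] + [\cbphi,\bP_+] + [\bY,\bP_+]\balpha\\
&= 4\sqrt\gamma\lambda\, s\,\bV_+\balpha - 4\sqrt\gamma\, c\,\bV_-\balpha + [\bY,\bP_+]\balpha,
\end{align*}
using $[\cbphi,\bP_+]=c[\cbphi,\bV_+]-s\lambda^{-1}[\cbphi,\bV_-]$. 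Finally, from the bracket relations \eqref{eq:YVV}, $[\bY,\bP_+]=c[\bY,\bV_+]-s\lambda^{-1}[\bY,\bV_-]=4\sqrt\gamma\lambda c\,\bV_- - 4\sqrt\gamma\lambda s\,\bV_+$, which exactly cancels the remaining two terms. The computation for $\bP_-$ is identical in structure, using $[\bY,\bV_-]=4\sqrt\gamma\lambda^2\bV_+$.

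\textbf{The spanning statement.} For the claim that $\{\bY,\bP_{\pm}\}$ generates the space of Killing fields of $\bphi$ of the form \eqref{eq:oftheform}, I would argue as follows. Given such a $\bQ=\bQ_0+c\,\bQ_++s\lambda\,\bQ_-$ with $\bQ_i$ all $\bmg_{\geq 0}$-valued (hence $\ttt$-independent in the sense that $\bQ_i$ lies in the base algebra), invert the transformation \eqref{eq:PPpm}: since $\cosh^2-\sinh^2=1$, the matrix there has determinant $1$, and one recovers $\bV_+=c\,\bP_++s\lambda^{-1}\bP_-$, $\bV_-=s\lambda\,\bP_++c\,\bP_-$. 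Substituting $\bP_{\pm}$ back in terms of $\bV_{\pm}$ and expanding $c,s$ in powers of $\ttt$, one sees that the $\ttt$-dependence of $\bQ$ must organize into the same $\cosh/\sinh$ pattern; matching the $\ttt$-independent part forces $\bQ_0$ to be a Killing field for $\cbphi$ that is $\bmg_{\geq 0}$-valued, and Thm.\ref{thm:VV} says this space is spanned by $\bY$ and $\bV_{\pm}$ --- but $\bV_+$ is $\bmg_{\geq 0}$-valued while the $c\,\bV_+$ cross-term is not $\ttt$-independent, so after regrouping one finds $\bQ_0$ is a multiple of $\bY$ (the only element of $\{\bY,\bV_\pm\}$ that contributes purely to the constant piece) plus contributions that recombine with $\bQ_\pm$. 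Then the Killing field equation for $\bphi$ applied to $\bQ-c\bQ_+-s\lambda\bQ_-$ and comparison of $\ttt$-homogeneous components, together with \eqref{eq:Yidentity}, forces $\bQ_\pm$ to be $\C[[\lambda^2]]$-linear combinations of $\bV_\pm$, hence $\bQ\in\operatorname{span}\{\bY,\bP_+,\bP_-\}$.

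\textbf{Main obstacle.} The genuinely delicate point is the spanning/uniqueness half: one must show that \emph{every} Killing field of $\bphi$ of the prescribed form \eqref{eq:oftheform} arises this way, i.e. that Thm.\ref{thm:VV}'s generation statement for $\cbphi$ transfers correctly through the $\ttt$-dependent conjugation-like change \eqref{eq:PPpm}. This requires care about what ``$\bmg_{\geq 0}$-valued'' means for the coefficients $\bQ_i$ versus the full $\ttt$-dependent field, and about the fact that $\ttt$ itself carries negative $\lambda$-powers, so that the $\lambda$-degree bookkeeping (bounded in $[-(2N+1),2N+1]$ for $\bphi$ in the truncated setting) interacts with the $\cosh/\sinh$ expansions. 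I would handle this by passing to the dressed picture of \S\ref{sec:dressing}: under $S^{-1}$, $\bphi\mapsto\bZ\balpha$ and $\bV_\pm\mapsto$ the constant-coefficient matrices $\bQ_\pm$ of the (commented-out) dressed formulas, where the Killing field equation becomes $\ed\bQ+[\bZ\balpha,\bQ]=0$ with $\bZ$ constant; there the structure is transparently that of solving a constant-coefficient linear ODE system in the single ``time'' $\ttt$, whose solution space of the stated exponential type is manifestly three-dimensional, spanned by the images of $\bY,\bP_\pm$. Transporting back via $S$ gives the result.
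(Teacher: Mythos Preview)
The paper itself omits the proof entirely (``A direct computation shows\ldots'' before the statement, and ``We omit the details of proof'' after). Your proposal supplies those details, and the approach is correct.

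For the first assertion, your substitution works; there is only a harmless slip in your display of $[\bY,\bP_+]$: since $[\bY,\bV_+]=4\sqrt\gamma\,\bV_-$ carries no extra $\lambda$, one has $[\bY,\bP_+]=4\sqrt\gamma\,c\,\bV_- - 4\sqrt\gamma\lambda\,s\,\bV_+$, which is precisely what cancels the residual $4\sqrt\gamma\lambda\,s\,\bV_+\balpha - 4\sqrt\gamma\,c\,\bV_-\balpha$.

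For the spanning statement, your final strategy---passing to the dressed picture of \S\ref{sec:dressing}, where the Killing equation becomes $\ed\bQ+[\bZ\balpha,\bQ]=0$ with $\bZ=Z\lambda$ constant---is the clean route. Since $\det\bZ=-4\gamma\lambda^2$, the eigenvalues of $\ad_{\bZ}$ on $\sla(2,\C)$ are $0,\pm 4\sqrt\gamma\lambda$, so the general solution is a combination of the three exponential types, which regroup as the $1,\cosh(4\sqrt\gamma\lambda\ttt),\sinh(4\sqrt\gamma\lambda\ttt)$ pattern of \eqref{eq:oftheform}; conjugating back by $S\in\bm{G}_{\geq 0}$ preserves the $\bmg_{\geq 0}$-valuedness of the coefficients and identifies the three pieces with $\bY,\bP_{\pm}$. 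Your intermediate paragraph attempting to match $\ttt$-homogeneous components directly is murkier as written and can simply be replaced by this dressed-picture argument.
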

We omit the details of proof.
\subb{Algebraic identities}
The Lie-bracket relations among $\{\, \bY, \bP_{\pm}\}$ are identical to \eqref{eq:YVV},
\begin{align}\label{eq:YPP}
[\bY,\bP_+]&= 4\sqrt{\gamma}  \bP_-, \qquad
[\bY,\bP_-]= 4\sqrt{\gamma}\lambda^2 \bP_+, \\
&\qquad[\bP_+,\bP_-]
=\sqrt{\gamma} \bY.\n 
\end{align}
We also have,
\begin{align}\label{eq:YPYP'}
\bY \bP_+= -\bP_+\bY&= 2\sqrt{\gamma}  \bP_-, \qquad
\bY \bP_- =-\bP_-\bY= 2\sqrt{\gamma}\lambda^2 \bP_+, \\
&\quad \bP_+ \bP_- =-  \bP_- \bP_+=\frac{1}{2}\sqrt{\gamma} \bY.\n
\end{align}

Note the determinant formulas,
\begin{align}\label{eq:PPdet}
\det(\bP_+)&= \gamma, \qquad\qquad
\det(\bP_-)= -\gamma \lambda^2, \\
&\det(\bY)
=-4\gamma\lambda^2.\n
\end{align}
Note also the trace relations $$\tr(\bY\bP_{\pm}),\; \tr(\bP_+\bP_-)=0.$$

\section{Spectral Killing field}\label{sec:spectral}
Due to the terms $\cosh(4\sqrt {\gamma}\lambda\ttt), \sinh(4\sqrt {\gamma}\lambda\ttt)$,
the Killing fields $\bP_{\pm}$ for $\bphi$ are formally defined objects and
it is not obvious how to draw a geometrically meaningful conclusion from them.

In this section, we introduce the non-local,  quasi-Killing fields for $\bphi$
called   \emph{spectral Killing fields},
by utilizing the weighted homogeneous property of the CMC system
with respect to the spectral parameter $\lambda$.
A spectral Killing field, denoted by $\bmcs$, is defined by 
the inhomogeneous Killing field equation \eqref{eq:inhomKF} given below.

Unlike  $\bP_{\pm}$, on which the construction of $\bmcs$ will be based,
it will be shown that a suitably normalized spectral Killing field, defined by \eqref{eq:Rformula}, 
is a  well defined $\bmg$-valued function;
when expanded as a formal Laurent series in $\lambda$,
it admits a finite expression for each coefficient.  
 
\two
Let us  introduce a relevant notation. 
Let $$\mcd:=\mcl_{\lambda \dd{}{\lambda}}$$
be the Euler operator with respect to the spectral parameter $\lambda$.
For a scalar function, or a differential form $A$, the notation  $\bdot{A}$ (upper-dot) would mean
the application of the Euler operator,
$$\bdot{A}=\mcd (A).$$

\sub{Inhomogeneous Killing field equation}
Consider the inhomogeneous Killing field equation 
\be\label{eq:inhomKF}
\ed \bmcs +[\bphi,\bmcs]=\bdot{\bphi}.
\ee
For the sake of notation, let $\Fh{\infty}_+$ denote the infinite jet space of the CMC hierarchy.\ftmark\fttext{Recall from Part \Rmnum{1} that 
 $\Fh{\infty}_+=\Fh{\infty} \times \{\tba_{n}\}_{n=0}^{\infty}\times \{ t_m\}_{m=0}^{\infty}.$ }
\begin{defn}
A \tb{spectral Killing field} is a $\bmg$-valued function 
$$\bmcs: \Fh{\infty}_+\to\bmg,$$
which satisfies the inhomogeneous Killing field equation \eqref{eq:inhomKF}.  
By definition, the space of spectral Killing fields is an affine space over 
the space of Killing fields for $\bphi$.
\end{defn}
The term spectral Killing field, although such $\bmcs$ is not exactly a Killing field, is justified 
by the following observation;
since the $\lambda$-degree of  $\bphi$, and hence of $\bdot{\bphi}$, is bounded from above
by $2N+1$,
the components of $\bmcs$ of $\lambda$-degree$\,\geq 2N+2$
do satisfy a version of the recursive structure equations \eqref{eq:abcstrt} 
for the ordinary Killing fields.
\begin{rem}\label{rem:compare}
Compare this definition of the \emph{higher-order} spectral Killing fields 
with that of the \emph{classical} spectral symmetries adopted in \cite[\S12]{Wang2013}.
The space of classical  spectral symmetries
 is by definition an affine space over the space of classical symmetries.
\end{rem}

\one
It follows that, by a similar argument as for the formal Killing field $\bY$,
a spectral Killing field  contains an infinite sequence of 
the corresponding  spectral Jacobi fields. 
 

On the other hand, the presence of the intermediate part of $\bmcs$ 
(of $\lambda$-degree$\,\leq 2N+1$)
will later re-emerge and impose the constraints on the Virasoro algebras on which 
the further extension of the CMC hierarchy will be based;
it requires that the corresponding Virasoro algebras are   truncated from below, \S\ref{sec:hVir}.
\subb{Motivation}
The basis for introducing Eq.\eqref{eq:inhomKF} is the following observation.
\begin{lem}\label{lem:basic}
Consider the inhomogeneous Killing field equation  
for a given 1-form $\bomega$:
\be 
\ed\bmcs+[\bphi,\bmcs]=\bomega.\n
\ee
Then this  equation for $\bmcs$ is compatible whenever  
$$\ed\bomega+[\bphi,\bomega] =
\ed\bomega+\bphi\w\bomega+\bomega\w\bphi=0.$$
\end{lem}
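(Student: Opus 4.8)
The plan is to use the standard fact that a linear inhomogeneous equation $\ed\bmcs+[\bphi,\bmcs]=\bomega$ of this type (an affine connection-type equation for a $\bmg$-valued $0$-form, with $\bphi$ flat) is Frobenius-integrable precisely when the right-hand side is closed with respect to the covariant differential $\ed+\ad_{\bphi}$. So first I would apply the operator ``$\ed + [\bphi,\,\cdot\,]$'' to both sides of the equation and check that applying it twice to $\bmcs$ gives zero, thanks to the structure equation $\ed\bphi+\bphi\w\bphi=0$ from \eqref{eq:Ybphi}; this forces the compatibility condition $(\ed+\ad_{\bphi})\bomega=0$, i.e. $\ed\bomega+\bphi\w\bomega+\bomega\w\bphi=0$, which is exactly the hypothesis.

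Concretely, the key computation is: from $\ed\bmcs=\bomega-[\bphi,\bmcs]=\bomega-\bphi\bmcs+\bmcs\bphi$, take $\ed$ of both sides and use $\ed^2=0$ together with $\ed\bphi=-\bphi\w\bphi$. One gets
\[
0=\ed\bomega-\ed(\bphi\bmcs)+\ed(\bmcs\bphi)
=\ed\bomega+\bphi\w\bphi\,\bmcs-\bphi\w\ed\bmcs
-\ed\bmcs\w\bphi-\bmcs\,\bphi\w\bphi,
\]
and then I would substitute the expression for $\ed\bmcs$ back in. All the terms that are quadratic in $\bphi$ and linear in $\bmcs$ cancel in pairs (this is the content of the Jacobi identity / flatness of $\bphi$), and what survives is $\ed\bomega+\bphi\w\bomega+\bomega\w\bphi=0$. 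Since this is satisfied by hypothesis, the $1$-form $\ed\bmcs$ defined by the right-hand side is closed as a consequence of nothing further, so by the Frobenius theorem (in the formal/differential-algebraic setting appropriate here, since the hierarchy is truncated) a solution $\bmcs$ exists, and the equation is compatible.

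The only genuinely delicate point — more bookkeeping than obstacle — is checking that the quadratic-in-$\bphi$ terms really do cancel: one must be careful with signs since $\bphi$ is an odd-degree ($1$-form) object and $\bmcs$ is even-degree, so $[\bphi,\bmcs]=\bphi\bmcs-\bmcs\bphi$ without sign subtleties, but $\bphi\w\bphi$ is the symmetric (not antisymmetric) product of a $1$-form with itself. I would organize the verification by writing everything in terms of associative matrix products of forms and tracking degrees, rather than in bracket notation, which makes the cancellation transparent. Since the paper works ``modulo the appropriate differential ideal'' throughout (as noted in \S\ref{sec:formulas}), I would also remark that the identity is to be read in that same sense, so no convergence or closure issues arise beyond those already handled by the truncation hypothesis of \S\ref{sec:truncated}.
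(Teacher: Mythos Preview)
Your approach is correct and is exactly what the paper means by ``Direct computation'': apply $\ed$ to both sides, use the flatness $\ed\bphi+\bphi\w\bphi=0$ from \eqref{eq:Ybphi}, substitute the expression for $\ed\bmcs$ back in, and watch the $\bphi$-quadratic terms cancel. One caveat: the intermediate display you wrote has the signs on the $\bphi\w\ed\bmcs$ and $\ed\bmcs\w\bphi$ terms flipped (the Leibniz rule gives $-\ed(\bphi\bmcs)+\ed(\bmcs\bphi)= -\ed\bphi\cdot\bmcs+\bphi\w\ed\bmcs+\ed\bmcs\w\bphi+\bmcs\cdot\ed\bphi$), and with your signs the quadratic terms would \emph{not} cancel; since you already flag this bookkeeping as the delicate point, just redo that line carefully and the argument goes through.
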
\begin{proof}Direct computation.
\end{proof}
For the proposed spectral Killing field equation ($\bomega=\bdot{\bphi}$), note that
$$\ed\bphi+\bphi\w\bphi=0 \quad\xrightarrow{\;\;\;\mcd\;\;\;}\quad \ed\bdot{\bphi}+\bphi\w\bdot{\bphi}+\bdot{\bphi}\w\bphi=0,
$$
and the compatibility condition is satisfied.
\sub{Spectral identities}
We record the characteristic properties of the spectral Killing fields. 
Let $\tn{D}$ denote the differential operator for the Killing field equation   for $\bphi$:
$$\tn{D}(\cdot):=\ed(\cdot)+[\bphi,(\cdot)].$$ 
Recall $\mcd=\lambda \dd{}{\lambda}$ is the Euler operator.
\begin{prop}\label{prop:Uabc}
Let $\bmcs$ be a spectral Killing field. 
\begin{enumerate}[\qquad a)]
\item
Let $\bP$ be a  Killing field for $\bphi$.
Consider the Lie bracket $[\bP,\bmcs]$.  Then
$$\tn{D}([\bP,\bmcs]) =[\bP,\bdot{\bphi}].
$$
\item
Note that
\be\label{eq:dPdot}
\ed\bP+[\bphi,\bP]=0  \quad\xrightarrow{\;\;\;\mcd\;\;\;}\quad
\tn{D}(\bdot{\bP})=\ed\bdot{\bP}+[\bphi,\bdot{\bP}]=[\bP,\bdot{\bphi}].
\ee
By Thm.\ref{thm:PP},  this implies,
\be\label{eq:YPPR}
[\bY,\bmcs]\equiv\bdot{\bY}, \qquad [\bP_{\pm},\bmcs]\equiv\bdot{\bP}_{\pm}\mod \bY,\bP_{\pm}\footnotemark.
\footnotetext{Here ``mod $ \bY,\bP_{\pm}$" means modulo the vector space of  Killing fields generated by $ \bY,\bP_{\pm}$.}
\ee
\item
Conversely, suppose a $\bmg$-valued function $\bmcs$ satisfies the three algebraic relations \eqref{eq:YPPR}. Then $\bmcs$ is a spectral Killing field.
\end{enumerate}
\end{prop}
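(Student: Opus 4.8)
The plan is to prove the three parts in order, since part (a) gives the computational identity that feeds parts (b) and (c). For part (a), I would start from the Jacobi-type identity for the operator $\tn{D}$: for any $\bmg$-valued functions $\bP, \bmcs$ one has the graded Leibniz rule
\[
\tn{D}([\bP,\bmcs]) = [\tn{D}(\bP),\bmcs] + [\bP,\tn{D}(\bmcs)],
\]
which follows from $\ed[\bP,\bmcs]=[\ed\bP,\bmcs]+[\bP,\ed\bmcs]$ together with the Jacobi identity for $[\bphi,[\bP,\bmcs]]$. Since $\bP$ is a Killing field, $\tn{D}(\bP)=0$, and since $\bmcs$ is a spectral Killing field, $\tn{D}(\bmcs)=\bdot{\bphi}$; substituting gives $\tn{D}([\bP,\bmcs])=[\bP,\bdot{\bphi}]$, which is the claim.

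For part (b), the displayed implication \eqref{eq:dPdot} is just the application of the Euler operator $\mcd$ to the Killing field equation $\ed\bP+[\bphi,\bP]=0$, using that $\mcd$ commutes with $\ed$ and acts as a derivation on the bracket: $\mcd(\ed\bP)=\ed\bdot{\bP}$ and $\mcd([\bphi,\bP])=[\bdot{\bphi},\bP]+[\bphi,\bdot{\bP}]$, so rearranging yields $\tn{D}(\bdot{\bP})=[\bP,\bdot{\bphi}]$. Combining this with part (a) shows that $[\bP,\bmcs]$ and $\bdot{\bP}$ both satisfy the same inhomogeneous equation with right-hand side $[\bP,\bdot{\bphi}]$, hence their difference is a Killing field for $\bphi$; applying this to $\bP=\bY$ and $\bP=\bP_\pm$ and invoking Thm.\ref{thm:PP} (which says $\{\bY,\bP_\pm\}$ generate the relevant space of Killing fields) gives the three congruences \eqref{eq:YPPR}. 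For $\bY$ the congruence is in fact an equality because $[\bY,\bmcs]-\bdot{\bY}$ must lie in the span of $\{\bY,\bP_\pm\}$ and a $\lambda$-degree / weight count forces the $\bP_\pm$-components to vanish — I would make this precise using that $\bdot{\bY}=\mcd(\bY)$ has the same structural form as $\bY$ and that $\bP_\pm$ carry the transcendental factors $\cosh(4\sqrt\gamma\lambda\ttt),\sinh(4\sqrt\gamma\lambda\ttt)$, which cannot appear in $[\bY,\bmcs]-\bdot\bY$.

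For part (c), the converse, I would argue as follows. Suppose $\bmcs$ satisfies the three relations \eqref{eq:YPPR}. Set $\bomega:=\tn{D}(\bmcs)-\bdot\bphi$; the goal is to show $\bomega=0$. By Lemma \ref{lem:basic} applied to the already-known compatibility $\tn{D}(\bdot\bphi$-equation$)$ and by part (a)'s Leibniz identity, one computes $\tn{D}([\bP,\bmcs])-[\bP,\tn{D}(\bmcs)]=0$ for a Killing field $\bP$, so that $[\bP,\bomega]=\tn{D}([\bP,\bmcs])-[\bP,\bdot\bphi]=0$ for $\bP\in\{\bY,\bP_\pm\}$ by hypothesis (modulo Killing fields, which are annihilated by $\tn{D}$). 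Thus $\bomega$ is a $\bmg$-valued $1$-form commuting with all of $\bY,\bP_\pm$. Since $\{\bY,\bP_\pm\}$ satisfy the $\sla(2,\C)$-type bracket relations \eqref{eq:YPP} and span (over the relevant coefficient ring) a copy of $\sla(2,\C)$ acting on $\bmg$ without nonzero invariants — concretely because $\det(\bY)=-4\gamma\lambda^2\neq 0$ so $\ad_{\bY}$ already has trivial kernel on the off-$\bY$ part, and the full triple has no common centralizer — we conclude $\bomega=0$, i.e. $\tn{D}(\bmcs)=\bdot\bphi$.

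The main obstacle I anticipate is the step in part (c) of showing that a $1$-form commuting with $\bY$ and $\bP_\pm$ must vanish: one has to check that the centralizer of the $\sla(2,\C)$-triple $\{\bY,\bP_\pm\}$ inside the loop algebra $\bmg$ (with coefficients allowed to be the non-local functions built from $\ttt,\bbp,\bsigma^\pm$) is trivial. This requires using the nondegeneracy $\det(\bY)=-4\gamma\lambda^2$ and the explicit forms \eqref{eq:Vpm}, \eqref{eq:PPpm} rather than abstract representation theory, because the coefficient ring is not a field; a careful $\lambda$-degree bookkeeping, or equivalently the observation that $\ad_{\bY}$ is invertible after inverting $\lambda$, should close the gap. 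Apart from that, the argument is a sequence of derivation/Jacobi-identity manipulations of the kind the paper elsewhere performs "by direct computation," so I expect parts (a) and (b) to be routine.
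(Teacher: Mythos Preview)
Your argument follows essentially the same route as the paper's own proof: the Leibniz identity for $\tn{D}$ in part (a), the application of $\mcd$ to the Killing field equation in part (b), and in part (c) the deduction that $[\bY,\tn{D}\bmcs-\bdot\bphi]=[\bP_\pm,\tn{D}\bmcs-\bdot\bphi]=0$ followed by the triviality of the common centralizer of $\{\bY,\bP_\pm\}$. The paper compresses (a) and (b) into ``direct computation'' and in (c) simply writes that the three bracket conditions ``force $\tn{D}\bmcs=\bdot\bphi$''; your expansion of the centralizer step is a welcome clarification, and the observation that $\det(\bY)=-4\gamma\lambda^2\neq 0$ already makes $\ad_\bY$ injective on the complement of $\bY$ is the right way to close it.

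One correction: your aside that ``for $\bY$ the congruence is in fact an equality'' is both unnecessary and wrong. The proposition only asserts $[\bY,\bmcs]\equiv\bdot\bY\bmod\bY,\bP_\pm$, and indeed for the normalized spectral Killing field one has $[\bY,\bmcs]-\bdot\bY=-\bY$ (Lem.~\ref{lem:SYPcommute}), so the $\bY$-component of the difference need not vanish. Your transcendental-factor argument would at best rule out the $\bP_\pm$-components, not the $\bY$-component. Simply drop that sentence; the congruence is all that is claimed and all that your argument needs for part (c).
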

\begin{proof}
a), b)
By Thm.\ref{thm:PP}, the vector space of Killing fields for $\bphi$ is spanned by 
$\{ \bY, \bP_{\pm}\}$. The rest follows by a direct computation.

c)
Differentiate the algebraic relations, and Eq.\eqref{eq:dPdot} gives
$$[\bY, \tn{D}\bmcs-\bdot{\bphi}]=0, \qquad [\bP_{\pm}, \tn{D}\bmcs-\bdot{\bphi}]=0.$$
This forces $\tn{D}\bmcs=\bdot{\bphi}.$
\end{proof}

The identity \eqref{eq:dPdot} allows one to algebraically solve for 
a particular spectral Killing field.
\begin{prop}\label{prop:spectral}
Let $\bmcs$ be given by 
\be\label{eq:Rformula}
\bmcs:=c_0\ad_{\bY}(\bdot{\bY})+c_+\ad_{\bP_+}(\bdot{\bP}_+)+c_-\ad_{\bP_-}(\bdot{\bP}_-),
\ee
where
\be 
c_0=+\frac{1}{32\gamma}\lambda^{-2},  \quad
c_{+}=- \frac{1}{32\gamma e^{2\bsigma^+}\bbb\bbc}=-\frac{1}{8\gamma},
\quad
c_{-}=+ \frac{1}{32\gamma e^{2\bsigma^+}\bbb\bbc}\lambda^{-2}=\frac{1}{8\gamma}\lambda^{-2}. \n
\ee
Then $\bmcs$ is a  spectral Killing field.
\end{prop}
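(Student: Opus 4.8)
The plan is to verify directly that the algebraically-defined $\bmcs$ in \eqref{eq:Rformula} satisfies the three bracket relations \eqref{eq:YPPR}, and then invoke Proposition~\ref{prop:Uabc}~c) to conclude that $\bmcs$ is a spectral Killing field. This reduces a differential statement (solving the inhomogeneous Killing field equation \eqref{eq:inhomKF}) to a purely Lie-algebraic check, which is the whole point of the construction. First I would record the elementary Lie algebra identity valid in the (essentially) $\sla(2,\C)$-triple $\{\bY,\bP_+,\bP_-\}$: since $[\bP,\bmcs]$ only needs to be computed modulo the span of $\{\bY,\bP_\pm\}$, and since $\ad_{\bP}(\bdot{\bP})$ is the natural candidate whose bracket against $\bP$ reproduces $\bdot{\bP}$, the key is the general fact that for elements $X$ with $[X,\bdot X]$ expressible through the triple, $\ad_X\ad_X(\bdot X)$ recovers a multiple of $\bdot X$ up to the centralizer directions. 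Concretely, using $[\bY,\bP_\pm]= 4\sqrt\gamma\bP_\mp$-type relations \eqref{eq:YPP} together with the scalar determinant/trace normalizations \eqref{eq:PPdet}, one computes $[\bY,\bmcs]$, $[\bP_+,\bmcs]$, $[\bP_-,\bmcs]$ term by term; the coefficients $c_0, c_+, c_-$ are precisely chosen so that the $\bY$-bracket yields $\bdot{\bY}$ and the $\bP_\pm$-brackets yield $\bdot{\bP}_\pm$ modulo $\{\bY,\bP_\pm\}$.

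The computation organizes naturally as follows. For each of the three summands $\ad_{\bZ_i}(\bdot{\bZ}_i)$ (with $\bZ_0=\bY$, $\bZ_+=\bP_+$, $\bZ_-=\bP_-$), I would apply $\ad_{\bY}$, $\ad_{\bP_+}$, $\ad_{\bP_-}$ in turn. The diagonal terms (applying $\ad_{\bZ_i}$ to $\ad_{\bZ_i}(\bdot{\bZ}_i)$) are handled by the double-bracket identity in the triple: using $\bZ_i\bZ_i = -\det(\bZ_i)I_2$ (a scalar), one gets $\ad_{\bZ_i}^2(\bdot{\bZ}_i) = $ a scalar multiple of $\bdot{\bZ}_i$ plus a term proportional to $\bZ_i$ itself; the normalizations $\det(\bP_\pm)=\pm\gamma$ or $\mp\gamma\lambda^2$ and $\det(\bY)=-4\gamma\lambda^2$ from \eqref{eq:PPdet} produce exactly the stated $c$'s. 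The off-diagonal terms (e.g. $\ad_{\bY}$ applied to $\ad_{\bP_\pm}(\bdot{\bP}_\pm)$) are the ones that must cancel or land in the span of $\{\bY,\bP_\pm\}$; here one uses the product relations \eqref{eq:YPYP'} (which are sharper than the bracket relations) together with the Leibniz rule for $\mcd$ applied to, say, $\bY\bP_+ = 2\sqrt\gamma\bP_-$, to express $\bdot{\bP}_\pm$ in terms of $\bdot{\bY}$, $\bdot{(\sqrt\gamma)}$ and the triple, so that the unwanted contributions collapse. I would also use that $\bsigma^+$ is constant in $\lambda$ after the normalization \eqref{eq:bsigma+normal}, so $\mcd(e^{2\bsigma^+}\bbb\bbc)$ behaves well.

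The main obstacle I anticipate is the bookkeeping in the off-diagonal cross terms: $\mcd$ does not annihilate $\gamma$, $\lambda$, $\ttt$, or $\bbp$, so $\bdot{\bP}_\pm$ and $\bdot{\bY}$ contain several pieces (the $\cosh/\sinh(4\sqrt\gamma\lambda\ttt)$ factors in \eqref{eq:PPpm} are not $\mcd$-closed, and $\mcd(\lambda\ttt)$ and $\mcd(\bsigma^-)=\mcd(-\lambda^2\bbp)$ bring in extra terms), and one must check that every stray term either cancels between the three summands or is absorbed into the "mod $\bY,\bP_\pm$" slack permitted in \eqref{eq:YPPR}. The saving grace is that Proposition~\ref{prop:Uabc}~c) only demands the three \emph{congruences} \eqref{eq:YPPR}, not exact equalities, so I do not need to track the components along $\{\bY,\bP_\pm\}$ at all — only to show the complementary components match $\bdot{\bY}$ and $\bdot{\bP}_\pm$. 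This cuts the computation down substantially, and I expect that after discarding the $\{\bY,\bP_\pm\}$-directions the verification of each of the three relations is a short calculation using \eqref{eq:YPYP'}, \eqref{eq:PPdet}, and the Leibniz rule for $\mcd$. Once \eqref{eq:YPPR} is confirmed, Proposition~\ref{prop:Uabc}~c) closes the argument immediately.
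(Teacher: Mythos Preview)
Your approach is valid but takes a genuinely different route from the paper.  The paper does \emph{not} verify the three bracket congruences \eqref{eq:YPPR} and then appeal to Proposition~\ref{prop:Uabc}~c).  Instead it applies the operator $\tn{D}=\ed+[\bphi,\cdot]$ directly to the formula \eqref{eq:Rformula}: since $\bY,\bP_\pm$ are Killing fields one has $\tn{D}\bZ_i=0$, and by \eqref{eq:dPdot} $\tn{D}\bdot{\bZ}_i=[\bZ_i,\bdot{\bphi}]$, so
\[
\tn{D}\bmcs=\sum_i c_i\,[\bZ_i,\tn{D}\bdot{\bZ}_i]=\sum_i c_i\,\ad_{\bZ_i}^2(\bdot{\bphi}),
\]
and the proof reduces to the single Casimir-type operator identity $c_0\ad_{\bY}^2+c_+\ad_{\bP_+}^2+c_-\ad_{\bP_-}^2=1_{\bmg}$, which follows from the bracket relations \eqref{eq:YPP} together with the determinant normalizations \eqref{eq:PPdet}.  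This is a two-line argument with no cross terms to track.

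Your route via Proposition~\ref{prop:Uabc}~c) also works, but the bookkeeping you anticipate is largely self-inflicted: you never need to unpack the $\cosh/\sinh(4\sqrt{\gamma}\lambda\ttt)$ expressions or differentiate $\ttt$ or $\bbp$, since the entire computation can be carried out at the level of the abstract relations \eqref{eq:YPP}, \eqref{eq:YPYP'}, \eqref{eq:PPdet} (and note $\mcd\gamma=0$, as $\gamma$ is a genuine constant).  Indeed the paper does essentially your computation later, in Lemma~\ref{lem:ScYVcommute} and Lemma~\ref{lem:SYPcommute}, to obtain the \emph{exact} bracket formulas \eqref{eq:mus} --- but only after Proposition~\ref{prop:spectral} is already established.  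So what your approach buys is a self-contained algebraic verification that bypasses the differential operator $\tn{D}$ entirely; what the paper's approach buys is that all three relations collapse to one clean operator identity applied to the single element $\bdot{\bphi}$, avoiding the Jacobi-identity gymnastics with mixed $\ad_{\bZ_j}\ad_{\bZ_i}(\bdot{\bZ}_i)$ terms.
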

\begin{proof}
We directly compute $\tn{D}\bmcs$ from the given formula. By \eqref{eq:dPdot},
$$\tn{D}\bmcs=c_0\ad_{\bY}^2(\bdot{\bphi})+c_+\ad_{\bP_+}^2(\bdot{\bphi})+c_-\ad_{\bP_-}^2(\bdot{\bphi}).
$$
The claim  $\tn{D}\bmcs=\bdot{\bphi}$ follows from the operator identity
$$c_0\ad_{\bY}^2+c_+\ad_{\bP_+}^2+c_-\ad_{\bP_-}^2=1_{\bmg}.
$$
\end{proof}
\begin{defn}
The spectral Killing field defined by \eqref{eq:Rformula}
is  the \tb{normalized} spectral Killing field.
\end{defn}
This choice of terminology is based on the Lie bracket relations \eqref{eq:mus} given below.
From now on, we will only consider the normalized spectral Killing field.

\sub{Spectral Killing field for $\cbphi$}
The spectral Killing field $\bmcs$ admits an alternative formula in terms of $\{ \bY, \bV_{\pm}\}$,
which is suitable for application.

\two
Consider
\be\label{eq:Rcformula}
\check{\bmcs}:=c_0\ad_{\bY}(\bdot{\bY})+c_+\ad_{\bV_+}(\bdot{\bV}_+)+c_-\ad_{\bV_-}(\bdot{\bV}_-).
\ee
Note that, since $\bY, \bV_-$ are $\bmg_{\geq 1}$-valued and
$\bV_+$ is $\bmg_{\geq 0}$-valued, $\check{\bmcs}$ is $\bmg_{\geq 1}$-valued.

By the similar argument as before, we have the equations  
\begin{align}\label{eq:YVVR}
&\ed\check{\bmcs}+[\cbphi,\check{\bmcs}]=\bdot{\cbphi},\\
&[\bY,\check{\bmcs}]\equiv\bdot{\bY}, \qquad [\bV_{\pm},\check{\bmcs}]\equiv\bdot{\bV}_{\pm}
\mod \bY, \bV_{\pm}.\n
\end{align}
In fact, we have the following exact formulas.
\begin{lem}\label{lem:ScYVcommute} 
Recall the normalization $ 4 e^{\bsigma^+}\bbb\bbc =1.$ Under this condition, 
\begin{align}\label{eq:ScYVcommute}
[\bV_+,\check{\bmcs}]-\bdot{\bV}_+&=0,\qquad [\bV_-,\check{\bmcs}]-\bdot{\bV}_- =-\bV_-,\\
&[\bY,\check{\bmcs}]-\bdot{\bY}=-\bY.\n
\end{align}
\end{lem}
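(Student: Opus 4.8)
The plan is to verify the three identities in \eqref{eq:ScYVcommute} by direct computation, exploiting the fact that $\cbmcs$ is built algebraically from $\{\bY,\bV_{\pm}\}$ via \eqref{eq:Rcformula} together with the product relations \eqref{eq:YVYV'} and the determinant normalizations \eqref{eq:VVdet}. First I would record the key observation that, by \eqref{eq:dPdot} applied to $\cbphi$ in place of $\bphi$ (using $\ed\bY+[\cbphi,\bY]=0$ from \eqref{eq:Ycbphi} and the analogous equations $\ed\bV_{\pm}+[\cbphi,\bV_{\pm}]=0$ from Thm.\ref{thm:VV}), one has
\[
\tn{D}_{\cbphi}(\bdot{\bY})=[\bY,\bdot{\cbphi}],\qquad
\tn{D}_{\cbphi}(\bdot{\bV}_{\pm})=[\bV_{\pm},\bdot{\cbphi}],
\]
where $\tn{D}_{\cbphi}(\cdot)=\ed(\cdot)+[\cbphi,(\cdot)]$. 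Since $\cbmcs$ satisfies the inhomogeneous equation \eqref{eq:YVVR}, the commutator $[\bX,\cbmcs]-\bdot{\bX}$ for $\bX\in\{\bY,\bV_{\pm}\}$ is $\tn{D}_{\cbphi}$-closed, hence by the rigidity of Killing fields for $\cbphi$ (Thm.\ref{thm:VV}) it lies in the span of $\{\bY,\bV_{\pm}\}$ — this is the content of \eqref{eq:YVVR}. The task is therefore to pin down the exact coefficients.

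Next I would compute these coefficients algebraically rather than analytically. Using $\bdot{\bY}=\mcd(\bY)$, the product relations \eqref{eq:YVYV'}, and the constancy facts $\mcd$-applied to the structure relations, one expands $[\bV_+,\cbmcs]$ by substituting \eqref{eq:Rcformula} and using $\ad_{\bX}(\bdot{\bX})=\bX\bdot{\bX}-\bdot{\bX}\bX$. The cross terms $\ad_{\bY}(\bdot{\bY})$, $\ad_{\bV_-}(\bdot{\bV}_-)$ get simplified by \eqref{eq:YVYV'} and the determinant identities $\det(\bV_+)=\gamma$, $\det(\bV_-)=-\gamma\lambda^2$, $\det(\bY)=-4\gamma\lambda^2$, which control expressions like $\bV_+^2$ (a scalar multiple of the identity, hence central). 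A convenient route is to track the Euler weights: $\bV_+$ has $\lambda$-weight $0$ in its leading behaviour while $\bV_-,\bY$ carry weight shifts coming from the explicit $\lambda$, $\lambda^2$ factors, and $e^{\bsigma^+}=\tfrac{1}{2\sqrt{\bbb\bbc}}$ together with $\bsigma^-=-\lambda^2\bbp$ give $\mcd(e^{\bsigma^+})=0$ and $\mcd(\bsigma^-)=-2\lambda^2\bbp=2\bsigma^-$ modulo the $\ed$-closed pieces. Feeding this into $\bdot{\bV}_{\pm}$ via the explicit formulas \eqref{eq:Vpm} produces the inhomogeneous shifts $0$, $-\bV_-$, $-\bY$ respectively. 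The normalization $4e^{2\bsigma^+}\bbb\bbc=1$ is exactly what makes $[\bV_+,\bV_-]=\sqrt{\gamma}\,\bY$ with no extra scalar, so the coefficients $c_0,c_\pm$ from Prop.\ref{prop:spectral} combine cleanly.

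The main obstacle I expect is the bookkeeping in the middle identity $[\bV_-,\cbmcs]-\bdot{\bV}_-=-\bV_-$: here the $\lambda^2$ in $[\bY,\bV_-]=4\sqrt{\gamma}\lambda^2\bV_+$ interacts with the $\lambda^{-2}$ in $c_0$ and $c_-$, so one must be careful that $\mcd$ acts on these explicit powers of $\lambda$ and not just on the ``geometric'' coefficients $\bba,\bbb,\bbc,\bbp$. The cleanest way to organize this is to first prove the operator identity $c_0\ad_{\bY}^2+c_+\ad_{\bV_+}^2+c_-\ad_{\bV_-}^2=1_{\bmg}$ on $\bmg_{\geq 1}$ (the analogue of the identity used in Prop.\ref{prop:spectral}), then differentiate it with $\mcd$, and finally contract against $\bV_{\pm}$ and $\bY$ using \eqref{eq:YVYV'}; the inhomogeneous terms $-\bV_-$ and $-\bY$ then appear precisely as $\mcd$ acting on the $\lambda^{\pm2}$ weights built into $c_-$ and the relation $[\bY,\bV_-]\sim\lambda^2$. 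I would present the $\bV_+$ case in full (it is the cleanest, giving exactly $0$) and indicate that the other two follow by the same manipulation with the weight shifts tracked, omitting the routine expansion.
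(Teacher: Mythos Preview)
Your approach is genuinely different from the paper's, and the paper's route is considerably shorter. The paper does not unwind the explicit formulas \eqref{eq:Vpm} at all. Instead, it writes $[\bX,\cbmcs]-\bdot{\bX}=\alpha_{\bX}\bY+\beta_{\bX}\bV_++\gamma_{\bX}\bV_-$ for $\bX\in\{\bY,\bV_+,\bV_-\}$ (nine unknown scalars), then applies the Jacobi identity to each of $[[\bY,\bV_{\pm}],\cbmcs]$ and $[[\bV_+,\bV_-],\cbmcs]$; combined with the $\mcd$-derivatives of the bracket relations \eqref{eq:YVV}, this yields six independent linear constraints among the nine scalars. The remaining three constraints come directly from the defining formula \eqref{eq:Rcformula} (e.g.\ via trace pairings). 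Solving this $9\times 9$ linear system gives the stated answer. No explicit knowledge of $\bbp$, $e^{\bsigma^+}$, or the entries of $\bV_{\pm}$ is needed beyond the structure constants.

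Your direct-computation strategy is legitimate in principle, but as written it contains a genuine error that would derail the weight-tracking shortcut. You assert $\mcd(e^{\bsigma^+})=0$ and $\mcd(\bsigma^-)=2\bsigma^-$. Neither holds: $e^{\bsigma^+}=\tfrac{1}{2}(\bbb\bbc)^{-1/2}$ and $\bbb,\bbc$ are genuine power series in $\lambda$ (starting at $\lambda^1$), so $\mcd$ acts nontrivially on $e^{\bsigma^+}$; likewise $\bbp$ is $\C[[\lambda^2]]$-valued, so $\mcd(\bsigma^-)=\mcd(-\lambda^2\bbp)=-2\lambda^2\bbp-\lambda^2\bdot{\bbp}\neq 2\bsigma^-$. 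Thus the ``inhomogeneous shifts $0,-\bV_-,-\bY$'' do \emph{not} arise simply from Euler weights of explicit $\lambda$-powers; they emerge from a cancellation among all the $\mcd$-contributions, which your heuristic misses. Your fallback via differentiating the operator identity $c_0\ad_{\bY}^2+c_+\ad_{\bV_+}^2+c_-\ad_{\bV_-}^2=1$ is closer in spirit to the paper's argument, but you have not indicated how to extract the three separate identities from it, and in practice this amounts to redoing the Jacobi computation. I would recommend abandoning the explicit-formula expansion and adopting the paper's linear-algebra approach.
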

\begin{proof}
Applying the Jacobi identity to $[[\bY,\bV_{\pm}],\check{\bmcs}], [[\bV_+,\bV_-],\check{\bmcs}]$,
one gets six linearly independent relations.
Eq.\eqref{eq:Rcformula} gives the remaining three relations.
\end{proof}

\sub{Spectral Killing field for $\pmb{\phi}$}
By definition, 
\begin{align}
\bdot{\cbphi}&=\ed \check{\bmcs}+[\cbphi,\check{\bmcs}] \n\\
&=\ed \check{\bmcs}+[\bphi-\bY\balpha,\check{\bmcs}] \n\\
&=\ed \check{\bmcs}+[\bphi,\check{\bmcs}]-[\bY,\check{\bmcs}]\balpha.\n
\end{align}
Hence, by \eqref{eq:ScYVcommute}, we have
\begin{align}
\tn{D}\check{\bmcs}&=\bdot{\cbphi}+[\bY, \check{\bmcs}]\balpha \n\\
&=\bdot{\cbphi}+(\bdot{\bY}-\bY)\balpha. \n
\end{align}
Since
$$\bY\balpha=\bphi-\cbphi\quad\xrightarrow{\;\;\;\mcd\;\;\;}\quad
\bdot{(\bY\balpha)}=\bdot{\bphi}-\bdot{\cbphi},$$
after substitution one gets
\begin{align}
\tn{D}\check{\bmcs}&=\bdot{\cbphi}+(\bdot{\bY}-\bY)\balpha \n\\
&=\bdot{\bphi}-(\bdot{\bY}\balpha+\bY\bdot{\balpha})+(\bdot{\bY}-\bY)\balpha\n\\
&=\bdot{\bphi}-(\bdot{\balpha}+ \balpha)\bY. \n  
\end{align}
It follows that
$$\bmcs\equiv\left(\int\left(\bdot{\balpha}+\balpha\right)\right)\bY
+\check{\bmcs},  \mod\bY,\bP_{\pm}.
$$
\begin{thm}\label{thm:spectralKilling}
Suppose the normalization of $\bsigma^+$ by
$4 e^{\bsigma^+}\bbb\bbc=1.$
Then the spectral Killing field \eqref{eq:Rformula} can be written in terms of 
$\{ \bY, \bV_{\pm}\}$ as
\begin{align}\label{eq:RVformula}
\bmcs&=\left(\bdot{\ttt}+\ttt \right)\bY+\check{\bmcs}.
\end{align}
Here the scalar function $\bdot{\ttt}$ is given by
$\bdot{\ttt}=\int\bdot{\balpha}=\frac{1}{2\im}\sum_{m=0}^{\infty}(-2m-2)\lambda^{-(2m+2)}t_m,$
and hence
$$\bdot{\ttt}+\ttt=\frac{\im}{2}\sum_{m=0}^{\infty}(2m+1)\lambda^{-(2m+2)}t_m.$$
Note that $\bmcs$ is $\bmg$-valued.
\end{thm}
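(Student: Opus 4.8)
The plan is to establish \eqref{eq:RVformula} by a single honest substitution, and then to read off the closed form of $\bdot{\ttt}+\ttt$ together with the loop‑algebra membership. The first step is to insert the expressions \eqref{eq:PPpm} for $\bP_{\pm}$ in terms of $\bV_{\pm}$ directly into the defining formula \eqref{eq:Rformula} for $\bmcs$ and expand, using that the Lie bracket is bilinear and that the Euler operator $\mcd$ is a derivation of the matrix product (hence of the bracket and of scalar‑times‑matrix products) and commutes with $\ed$. I will abbreviate $\theta:=4\sqrt{\gamma}\lambda\ttt$, $c:=\cosh\theta$, $s:=\sinh\theta$, so that \eqref{eq:PPpm} reads $\bP_+=c\,\bV_+-s\lambda^{-1}\bV_-$ and $\bP_-=-s\lambda\,\bV_++c\,\bV_-$, a transformation with $c^2-s^2=1$. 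Since $\gamma$ carries no $\lambda$‑weight (cf.\ \eqref{eq:b2c2}) and $\mcd(\lambda)=\lambda$, $\mcd(\ttt)=\bdot{\ttt}$, one gets $\mcd(\theta)=4\sqrt{\gamma}\,\lambda\,(\bdot{\ttt}+\ttt)$; this is where the combination $\bdot{\ttt}+\ttt$ will enter.

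Next I would observe that applying $\mcd$ to $\bP_{\pm}$ produces two kinds of term: those in which $\mcd$ falls on a scalar coefficient ($c$, $s$, $\lambda^{\pm1}$) and those in which it falls on a $\bV_{\pm}$. Accordingly each of $\ad_{\bP_+}(\bdot{\bP}_+)$, $\ad_{\bP_-}(\bdot{\bP}_-)$ splits into a part built from the brackets $[\bV_j,\bV_k]$ and a part built from $[\bV_j,\bdot{\bV}_k]$. For the first part, the $\sla(2,\C)$‑type relations \eqref{eq:YVV} — with the normalization \eqref{eq:bsigma+normal} in force, so $[\bV_+,\bV_-]=\sqrt{\gamma}\,\bY$ — collapse everything to a multiple of $\bY$; a short computation using only $c^2-s^2=1$ should give the $\bP_+$ and $\bP_-$ contributions $\sqrt{\gamma}\,\lambda^{-1}(cs-\mcd\theta)\,\bY$ and $\sqrt{\gamma}\,\lambda(cs+\mcd\theta)\,\bY$, and weighting these by the prescribed $c_{\pm}$ (for which $c_+\lambda^{-1}=-c_-\lambda=-\tfrac{1}{8\gamma}\lambda^{-1}$) the $cs$‑terms cancel while the $\mcd\theta$‑terms reinforce, leaving $\tfrac{1}{4\sqrt{\gamma}}\lambda^{-1}(\mcd\theta)\,\bY=(\bdot{\ttt}+\ttt)\,\bY$. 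For the second part, using $c^2-s^2=1$ together with $c_-=-\lambda^{-2}c_+$, the cross terms $[\bV_+,\bdot{\bV}_-]+[\bV_-,\bdot{\bV}_+]$ should cancel and the diagonal terms recombine into exactly $c_+[\bV_+,\bdot{\bV}_+]+c_-[\bV_-,\bdot{\bV}_-]$; adding back the term $c_0\ad_{\bY}(\bdot{\bY})$ common to \eqref{eq:Rformula} and \eqref{eq:Rcformula} turns this into $\check{\bmcs}$. Combining the two parts yields \eqref{eq:RVformula}.

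To finish, I would apply $\mcd$ termwise to \eqref{eq:ttt}, using $\mcd(\lambda^{-(2m+2)})=-(2m+2)\lambda^{-(2m+2)}$, to obtain $\bdot{\ttt}=\frac{1}{2\im}\sum_{m\ge0}(-2m-2)\lambda^{-(2m+2)}t_m$ and hence $\bdot{\ttt}+\ttt=\frac{\im}{2}\sum_{m\ge0}(2m+1)\lambda^{-(2m+2)}t_m$. In the $\tba_N,t_N$‑truncated setting this is a finite Laurent polynomial in $\lambda$, even in $\lambda$; since $\bY$ is twisted‑loop‑algebra valued and $\check{\bmcs}$ is $\bmg_{\ge1}$‑valued, the right‑hand side of \eqref{eq:RVformula} lies in $\bmg$. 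In particular the transcendental factors $\cosh\theta,\sinh\theta$ entering $\bmcs$ through \eqref{eq:PPpm} will have cancelled, which is the substantive content of the statement. As a consistency check one can note $\tn{D}\big((\bdot{\ttt}+\ttt)\bY\big)=\big(\ed(\bdot{\ttt}+\ttt)\big)\bY=(\bdot{\balpha}+\balpha)\bY$ (since $\bY$ is a Killing field for $\bphi$ and $\ed$ commutes with $\mcd$), whence $\tn{D}\bmcs=(\bdot{\balpha}+\balpha)\bY+\tn{D}\check{\bmcs}=\bdot{\bphi}$ by the computation immediately preceding the theorem, so the identity is consistent with $\bmcs$ being a spectral Killing field.

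The step I expect to be the main obstacle is the bookkeeping in the middle paragraph: tracking which of the bracket terms $[\bV_j,\bV_k]$, $[\bV_j,\bdot{\bV}_k]$ survive and how the scalar factors $c,s,c_{\pm},\mcd\theta,\lambda^{\pm1}$ pair up. But this is entirely controlled by the bracket relations \eqref{eq:YVV} (equivalently the sharper matrix identities \eqref{eq:YVYV'}) and the single identity $\cosh^2\theta-\sinh^2\theta=1$, so I do not expect to need any ingredient beyond those already assembled.
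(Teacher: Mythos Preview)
Your proposal is correct and follows exactly the route the paper indicates: the paper's entire proof is the single line ``Substitute \eqref{eq:PPpm} to \eqref{eq:Rformula}'', and you have carried out precisely that substitution in detail, with the bracket relations \eqref{eq:YVV} and the identity $\cosh^2\theta-\sinh^2\theta=1$ doing the work. Your consistency check at the end also reproduces the $\tn{D}$-operator computation the paper gives just before the theorem statement.
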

\begin{proof}
Substitute \eqref{eq:PPpm} to \eqref{eq:Rformula}.
\end{proof}
\subb{Spectral identities}
We record the analogue of  Lem.~\ref{lem:ScYVcommute} for $\{\bY, \bP_{\pm}\}$.
\begin{lem}\label{lem:SYPcommute} 
Recall the normalization $ 4 e^{\bsigma^+}\bbb\bbc =1.$ Under this condition, 
\begin{align}\label{eq:mus}
[\bP_+, {\bmcs}]-\bdot{\bP}_+&=0,\qquad [\bP_-, {\bmcs}]-\bdot{\bP}_- =-\bP_-,\\
&[\bY, {\bmcs}]-\bdot{\bY}=-\bY.\n
\end{align}
\end{lem}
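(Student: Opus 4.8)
The statement to prove is Lemma~\ref{lem:SYPcommute}, which records the three exact commutator identities
\[
[\bP_+,\bmcs]-\bdot{\bP}_+=0,\qquad
[\bP_-,\bmcs]-\bdot{\bP}_-=-\bP_-,\qquad
[\bY,\bmcs]-\bdot{\bY}=-\bY,
\]
under the normalization $4e^{\bsigma^+}\bbb\bbc=1$. The natural approach is to transport the already-established identities \eqref{eq:ScYVcommute} for $\check{\bmcs}$ and $\{\bY,\bV_\pm\}$ across the linear change of basis \eqref{eq:PPpm} together with the relation \eqref{eq:RVformula} between $\bmcs$ and $\check{\bmcs}$. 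Concretely, I would write $\bmcs=(\bdot{\ttt}+\ttt)\bY+\check{\bmcs}$ and substitute the defining formulas $\bP_+=\cosh(4\sqrt{\gamma}\lambda\ttt)\bV_+-\sinh(4\sqrt{\gamma}\lambda\ttt)\lambda^{-1}\bV_-$ and $\bP_-=-\sinh(4\sqrt{\gamma}\lambda\ttt)\lambda\bV_++\cosh(4\sqrt{\gamma}\lambda\ttt)\bV_-$, then expand $[\bP_\pm,\bmcs]$ bilinearly.

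\textbf{Key steps, in order.} First, compute $[\bP_+,(\bdot\ttt+\ttt)\bY]$ and $[\bP_+,\check{\bmcs}]$ separately. The second bracket is handled by expanding $\bP_+$ in the $\bV$-basis and invoking \eqref{eq:ScYVcommute}: $[\bV_+,\check{\bmcs}]=\bdot{\bV}_+$, $[\bV_-,\check{\bmcs}]=\bdot{\bV}_- -\bV_-$, $[\bY,\check{\bmcs}]=\bdot{\bY}-\bY$. The first bracket uses the $\sla(2,\C)$-type relations \eqref{eq:YVV}/\eqref{eq:YPP}, e.g. $[\bV_\pm,\bY]=-[\bY,\bV_\pm]$ with the explicit coefficients $4\sqrt\gamma$, $4\sqrt\gamma\lambda^2$. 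Second, I would compute $\bdot{\bP}_+$ directly: since $\bdot{}=\mcl_{\lambda\partial_\lambda}$ is a derivation, $\bdot{\bP}_+=\bdot{(\cosh(\cdots))}\bV_++\cosh(\cdots)\bdot{\bV}_+-\bdot{(\sinh(\cdots)\lambda^{-1})}\bV_--\sinh(\cdots)\lambda^{-1}\bdot{\bV}_-$, where the Euler-operator acts on $\cosh(4\sqrt\gamma\lambda\ttt)$ through the $\lambda$-weight of $\ttt$; note $\mcd(\lambda\ttt)=\lambda\ttt+\lambda\bdot\ttt$, which is exactly the combination appearing in the $[\bP_+,(\bdot\ttt+\ttt)\bY]$ term — this is the mechanism by which the $\ttt$-dependent pieces cancel. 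Third, assemble: match $[\bP_+,\bmcs]$ against $\bdot{\bP}_+$ and verify the difference is $0$; repeat verbatim for $\bP_-$ (where the bookkeeping produces the extra $-\bP_-$ because $\bV_-$ carries the $-\bV_-$ defect in \eqref{eq:ScYVcommute}); the $\bY$-identity is immediate from \eqref{eq:ScYVcommute} and $[\bY,\bY]=0$ together with $[\bY,(\bdot\ttt+\ttt)\bY]=0$.

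\textbf{The main obstacle.} The bookkeeping in the second step is the delicate part: one must track how the Euler operator differentiates the hyperbolic prefactors $\cosh(4\sqrt\gamma\lambda\ttt)$, $\sinh(4\sqrt\gamma\lambda\ttt)$ — which depend on $\lambda$ both explicitly and through $\ttt=\frac{1}{2\im}\sum\lambda^{-(2m+2)}t_m$ — and confirm that these "anomalous" terms are precisely absorbed by the $(\bdot\ttt+\ttt)\bY$ correction in \eqref{eq:RVformula} via the commutators $[\bV_\pm,\bY]$. A clean way to organize this is to observe that $\bmcs$ and $\bP_\pm$ are all built from $\{\bY,\bV_\pm\}$ by the \emph{same} $2\times2$ hyperbolic rotation matrix in \eqref{eq:PPpm} and its $\mcd$-derivative, so that the identity reduces to a $2\times2$ matrix identity among the rotation, its derivative, and the "defect" matrix $\operatorname{diag}(0,-1)$ appearing in \eqref{eq:ScYVcommute} — once phrased this way the verification is a short linear-algebra computation rather than a term-by-term hyperbolic-function slog. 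That reformulation is what I would present, with the routine expansions suppressed as "a direct computation."
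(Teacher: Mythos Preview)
Your approach is correct, and the hyperbolic bookkeeping you outline does close up exactly as you describe. However, it is a genuinely different (and longer) route than what the paper intends.

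The paper gives no separate proof of Lemma~\ref{lem:SYPcommute}; it simply introduces it as ``the analogue of Lem.~\ref{lem:ScYVcommute} for $\{\bY,\bP_\pm\}$.'' The point is that the proof of Lemma~\ref{lem:ScYVcommute} uses only two ingredients: the $\sla(2,\C)$-type bracket relations \eqref{eq:YVV} among $\{\bY,\bV_\pm\}$ (with the normalization $4e^{2\bsigma^+}\bbb\bbc=1$), and the defining formula \eqref{eq:Rcformula} for $\check{\bmcs}$. Since the bracket relations \eqref{eq:YPP} among $\{\bY,\bP_\pm\}$ are \emph{identical} to \eqref{eq:YVV}, and the formula \eqref{eq:Rformula} for $\bmcs$ in terms of $\{\bY,\bP_\pm\}$ is \emph{identical in form} to \eqref{eq:Rcformula}, the Jacobi-identity argument of Lemma~\ref{lem:ScYVcommute} applies verbatim with the substitution $\bV_\pm\mapsto\bP_\pm$, $\check{\bmcs}\mapsto\bmcs$. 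No transport through \eqref{eq:PPpm} or \eqref{eq:RVformula} is needed.

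What each approach buys: the paper's route is a one-line observation with zero new computation. Your route, while heavier, provides an independent consistency check that the two presentations of the spectral Killing field --- via \eqref{eq:Rformula} and via \eqref{eq:RVformula} --- really agree, and in particular confirms that the $(\bdot\ttt+\ttt)\bY$ correction in \eqref{eq:RVformula} is exactly what absorbs the Euler-operator action on the hyperbolic rotation \eqref{eq:PPpm}. That is a worthwhile sanity check, but for the purpose of establishing the lemma it is unnecessary.
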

The  last equation for $[\bY,\bmcs]$ can be rewritten as; 
\be\label{eq:keyformula}
\fbox{\quad$[ {\bmcs},\lambda^{-1}\bY]+ \bdot{(\lambda^{-1} \bY)}=0.$\quad} 
\ee
This formula will be important for the further extension of the Maurer-Cartan form $\bphi$
to the associated affine Kac-Moody algebra valued 1-form,
in order to incorporate the  spectral symmetries
for  the construction of the extended CMC hierarchy.

\subb{$\det{\bmcs}$}
Note from \eqref{eq:inhomKF} that
$$\ed (\bmcs^2) =(\ed\bmcs)\bmcs+\bmcs(\ed\bmcs)=
\bdot{\bphi}\bmcs+\bmcs\bdot{\bphi}.
$$
Here we used the identity $\bmcs^2+\det(\bmcs)I_2=0$, for $\tr(\bmcs)=0$. 
Take the trace, and one finds
\be\label{eq:ddetS}
\ed(\det\bmcs)=-\tr(\bmcs\bdot{\bphi}).
\ee

\section{Affine Killing fields}\label{sec:affineKilling}
In Part \Rmnum{1},
the CMC hierarchy was obtained by extending the original CMC system
by the higher-order commuting symmetries
generated by the formal Killing field $\bY$.
We wish  to define a further extension of the CMC hierarchy
by the  (non-commuting) symmetries generated by the spectral Killing field $\bmcs.$

We will find that the differential algebraic relations among the ingredients of the construction
are consistent with the Lie algebra structure of the associated generalized affine Kac-Moody algebras,
denoted by  $\hbmg^{\pm}_{N+1}$; 
subsequently, they will be packaged into the $\hbmg^{\pm}_{N+1}$-valued extended Maurer-Cartan forms
and the corresponding $\hbmg^{\pm}_{N+1}$-valued\ftmark\fttext{Up 
to scaling the loop algebra part by $\lambda^{-1}$. See \S\ref{sec:CMC+}.} 
affine Killing fields.  

To this end, we give a definition of the generalized affine Kac-Moody algebras $\hbmg^{\pm}_{N+1}$
as the  loop algebra $\bmg$ enhanced by the truncated Virasoro algebras  of even derivations,
Defn.\ref{defn:truncatedVir}, Defn.\ref{defn:affineKM}.
For an indication of  the later construction,
we then show that the triple of data $(\bphi, \bY,\bmcs)$   
admit a lift to $\hbmg^{\pm}_{N+1}$-valued affine Killing field equations.
\sub{Affine Kac-Moody algebras}
From the classification of Kac-Moody algebras, 
an affine Kac-Moody algebra is the central extension of a (twisted) loop algebra enhanced
by a derivation, \cite{Kac1990}.
For our purpose, the relevant Lie algebra is the semi-direct product of the loop algebra $\bmg$
with  a truncated Virasoro algebra of even derivations,  \S\ref{sec:hVir}.
We shall abuse the terminology and call this an affine Kac-Moody algebra.

Since the central component does not contribute to nor affect the construction 
of the extended CMC hierarchy,
it is postponed to \S\ref{sec:central}.
\subb{Truncated (centerless) Virasoro algebras}\label{sec:hVir}
Let the Euler operator
$$\mcd=\lambda\dd{}{\lambda}$$
now be considered as  the operator  acting on $\sla(2,\C)[[\lambda^{-1},\lambda]]$
as a derivation. 
Under the formal complex conjugation ($\lambda\to\lambda^{-1}$), note that
$$\ol{\mcd}=\lambda^{-1}\dd{}{\lambda^{-1}}=\lambda^{-1}\dd{\lambda}{\lambda^{-1}}
\dd{}{\lambda}=-\mcd.$$

For integers $\ell, k \geq 0$, let
$$\delsb{\ell}=\tcr{-}\, \lambda^{2\ell} \mcd, \qquad
\dels{k}= \lambda^{-2k}\mcd.
$$
Note the commutation relations
\be\label{eq:ssbracket}
[\delsb{\ell},\delsb{s}] =  (2\ell-2s)\delsb{\ell+s},\qquad
[\dels{j}, \dels{k}]= (2j-2k)\dels{j+k}.
\ee
\begin{defn}\label{defn:truncatedVir}
Let $N\geq 0$ be a non-negative integer.
The \tb{$N$-truncated (centerless) Virasoro algebras} are defined by
\be\label{eq:Virdefn}
\Vir^+_{N+1}:=\langle \delsb{\ell} \rangle_{\ell\geq N+1}, \qquad
\Vir^-_{N+1}:=\langle \dels{k} \rangle_{k\geq N+1}.
\ee
They are called of positive/negative types respectively.
\end{defn}

Let $\{ \sigmab_{\ell}\}, \{\sigma_k\}$ be the 1-forms formally dual to 
$\{\delsb{\ell}\}, \{\dels{k}\}$ respectively.
Consider the generating series
$$\bsigma_+:=\sum_{\ell=0}^{\infty} \tcr{-}\, \lambda^{2\ell}\sigmab_{\ell},  \quad
\bsigma_-:=\sum_{k=0}^{\infty} \lambda^{-2k}\sigma_k.$$
Given the truncation parameter $N$, it is understood here that:
$$
\fbox{ $\quad  \sigmab_{\ell},   \sigma_k =0,\;\;\forall \,\ell, k \leq N.\quad$
}$$
For a uniform treatment, we retain the lower bound for the summation to be 0 instead of $N+1$.

Then,  the structure equation for $\Vir^{\pm}_{N+1}$ is
written in terms of the generating 1-form by
\be\label{eq:Virstrt}
\ed\bsigma_{\pm}+\bsigma_{\pm}\w\bdot{\bsigma_{\pm}}=0.
\ee
Collecting the terms with respect to $\lambda$-degree, this gives the following formal
structure equations for $\sigmab_{\ell}, \sigma_k$:
\be\label{eq:sigmastrt}
\ed\sigmab_{i}=\sum_{\ell+s=i} (s-\ell)\sigmab_{\ell}\w\sigmab_s,
\qquad
\ed\sigma_i=\sum_{j+k=i} (k-j)\sigma_j\w\sigma_k.
\ee
Note that they agree  with \eqref{eq:ssbracket}.

\subb{Affine Kac-Moody algebras}
The Lie algebras $\Vir^{\pm}_{N+1}$  naturally act on $\bmg$ as derivations.
\begin{defn}\label{defn:affineKM}
Let $\bmg$ be the twisted loop algebra \eqref{eq:twisted}.
Given the truncation parameter $N\geq 0$,
the associated (centerless) \tb{affine Kac-Moody algebras} $\hbmg^{\pm}_{N+1}$ are defined
as the semidirect product
\be 
\hbmg^{\pm}_{N+1}:=  \Vir^{\pm}_{N+1}  \ltimes \bmg. \n
\ee
They are called of positive/negative types respectively.
\end{defn}


\sub{Affine lifts of Killing fields}\label{sec:affinelift}
The formal Killing field $\bY$ and the spectral Killing field $\bmcs$
admit  the \emph{affine lifts} to the $\hbmg^{\pm}_{N+1}$-valued Killing fields as follows.

\two
Set
$$\hat{\bY}=(0,\bY),\quad \hat{\bmcs}=(\tne^{u_{\pm}},\tne^{u_{\pm}}\bmcs)$$
be the lifts of $\{\bY,\bmcs \}$ respectively to the $\hbmg^{\pm}_{N+1}$- valued functions,
where
$$\tne^{u_{\pm}}\in \C[[\lambda^{\pm 2}]]\lambda^{\pm(2N+2)}$$
(the first component is the derivation part).
Set
$$\bPhi=(0,\bphi)$$
be the trivial lift of $\bphi$ to the $\hbmg^{\pm}_{N+1}$-valued 1-form.
Then it is easily checked that they satisfy the corresponding 
$\hbmg^{\pm}_{N+1}$-valued Killing field equations;
\begin{align}\label{eq:lifteq}
\ed \hbY +[\bPhi,  \hbY]&=0,    \\
\ed \hbmcs +[\bPhi, \hbmcs ]&=0,  \n  \\
\ed\bPhi+\frac{1}{2}[\bPhi,\bPhi]&=0. \n
\end{align}
Note here the loop algebra component of the second equation gives
$$\ed (\tne^{u_{\pm}}\bmcs)+[\bphi,\tne^{u_{\pm}}\bmcs]-\tne^{u_{\pm}}\bdot{\bphi}=0.
$$
Up to scaling by $\tne^{u_{\pm}}$, 
this agrees with Eq.\eqref{eq:inhomKF}.

\section{Extension by non-commuting symmetries}\label{sec:extension}
As a first step toward the construction of the extended CMC hierarchy,
we examine an extension of the structure equation for $\bY$
by the symmetries generated by $\bmcs$.
The extension formulas are dictated by the commutation relation \eqref{eq:keyformula},  
\S\ref{sec:motivationextY}.
We check the partial compatibility and
show that the additional equations commute with the original equations of the CMC hierarchy.

In view of the Lie algebra structure of $\hbmg^{\pm}_{N+1}$,
the extended structure equation for $\bY$  is  suggestive
of the corresponding extension of the Maurer-Cartan form $\bphi$.
The full description of the extended CMC hierarchy will be given in 
\S\S\ref{sec:CMC+}-\ref{sec:exformulas}.

\sub{Extension for $\bY$}\label{sec:extendY}
\subb{Motivation}\label{sec:motivationextY}
Consider the decomposition
$$\lambda^{-2k}\bmcs:=\mcs_k+\mcs_{(k+1)}\in \bmg_{\leq -1}+^{vs} \bmg_{\geq 0}.$$
From \eqref{eq:keyformula},
$$[\mcs_k+\mcs_{(k+1)}, \lambda^{-1}\bY]+\lambda^{-2k}\bdot{(\lambda^{-1}\bY)}=0.$$
Hence
\be\label{eq:pmrelationS}
- [\mcs_k, \lambda^{-1}\bY]-\lambda^{-2k}\bdot{(\lambda^{-1}\bY)}
=[\mcs_{(k+1)},\lambda^{-1}\bY]\in\bmg_{\geq 0},
\ee
and the LHS of this equation has no $\bmg_{\leq -1}$-part.
Equivalently,
\be\label{eq:pmrelationS'}
- [\mcs_k,  \bY]-\lambda^{-2k} ( \bdot{\bY}-\bY) 
=[\mcs_{(k+1)}, \bY]\in\bmg_{\geq 1}.
\ee

\subb{Extension for $\bY$}
This observation suggests the following partial extension of the structure equation for $\bY$.

\two
Recall the basis  $\{  \delsb{\ell} \}_{\ell\geq N+1}, \{ \dels{k}  \}_{k\geq N+1}$ 
of the truncated Virasoro algebras $\Vir^{\pm}_{N+1}$ respectively, \S\ref{sec:hVir}.
Consider the following extended structure equation for $\bY$:

\two
(for $0\leq m,n\leq N, \;  N+1\leq  k, \ell$) 
\be\label{eq:Y+}\left\{
\begin{array}{rlrl}
\del_{t_m}\bY&=-[U_m, \bY], &  \del_{t_m}\ol{\bY}^t&=-[U_m,\ol{\bY}^t],                        \\
\del_{\tba_n }\bY&=+[\ol{U}^t_n,\bY],& \del_{\tba_n }\ol{\bY}^t&=+[\ol{U}^t_n,\ol{\bY}^t],\\
\dels{k}\bY&=-[\mcs_k,\bY]-\lambda^{-2k}(\bdot{\bY}-\bY),&
\dels{k}\ol{\bY}^t&=-[\mcs_k,\ol{\bY}^t]\,\tcr{-}\, \lambda^{-2k}(\bdot{\ol{\bY}^t}+\ol{\bY}^t), \\
\delsb{\ell}\bY&=+[\ol{\mcs}^t_{\ell},\bY] \,\tcr{+} \,\lambda^{+2\ell}(\bdot{\bY}-\bY),& 
\delsb{\ell}\ol{\bY}^t&=+[\ol{\mcs}^t_{\ell},\ol{\bY}^t] 
\,\tcr{+}\,\lambda^{+2\ell}(\bdot{\ol{\bY}^t}+\ol{\bY}^t).
\end{array}\right.
\ee

\one
We claim that;
\beit
\item the $\delsb{\ell}, \dels{k}$-flows for $\bY$  commute
with  $\del_{\tba_n}, \del_{t_m}$-flows respectively,
$$[\delsb{\ell}, \del_{\tba_n}]\bY=0,\quad [\dels{k}, \del_{t_m}]\bY=0.$$
\item the differential operators $\delsb{\ell}, \dels{k}$ respectively satisfy the Virasoro relations,
 $$[\delsb{\ell},\delsb{s}]\bY=(2\ell-2s)\delsb{\ell+s}\bY,
\quad [\dels{j},\dels{k}]\bY=(2j-2k)\dels{j+k}\bY.$$
\enit

In this section, we verify only the first claim. This shows that 
the proposed system of  equations \eqref{eq:Y+}
is indeed a (part of) symmetry extension of the CMC hierarchy.
For the second claim, the $\delsb{\ell}, \dels{k}$-derivatives of $\bmcs$ need to be introduced,
and this will be checked in the next section.
\sub{Compatibility}
We show that $$[\del_{t_m},\dels{k}]\bY=0.$$
The relation $[\del_{\tba_n},\delsb{\ell}]\conjt{\bY}=0$ follows by taking 
the  (obvious)  formal conjugate transpose.
\subb{Identities}\label{sec:identities}
Assume for the moment that $k, \ell  \geq 0$ are non-negative integers.
\begin{lem}
\begin{align}\label{eq:delSk}
\del_{t_m} \mcs_k&=-[U_m,\mcs_k]-[U_m,\mcs_{(k+1)}]_{\leq -1}
+(\lambda^{-2k}\bdot{U}_m)_{\leq -1},\\
\del_{\tbar{n}} \mcs_k&=+[\Uba^t_n,\mcs_k]_{\leq -1}-(\lambda^{-2k}\bdot{\Uba^t_n})_{\leq -1}.\n
\end{align}
\end{lem}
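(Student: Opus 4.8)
The plan is to obtain both identities in \eqref{eq:delSk} by restricting the inhomogeneous Killing field equation \eqref{eq:inhomKF} for $\bmcs$ to the $\ed t_m$- and $\ed t_{\ol n}$-directions, then multiplying by $\lambda^{-2k}$ and projecting onto $\bmg_{\leq -1}$ along the $+^{vs}$-decomposition \eqref{eq:vsdecomp}.

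First I would read off the component equations. By \eqref{eq:extphi2} the $\ed t_m$- and $\ed t_{\ol n}$-coefficients of $\bphi$ are $U_m$ and $-\Uba^t_n$ respectively; since the Euler operator $\mcd=\lambda\dd{}{\lambda}$ acts only on the $\lambda$-dependence of the coefficients (the coordinate $1$-forms $\ed t_m,\ed t_{\ol n},\xi,\xib,\rho$ being $\lambda$-independent), the corresponding coefficients of $\bdot{\bphi}$ are $\bdot{U}_m$ and $-\bdot{\Uba^t_n}$. Comparing coefficients of $\ed t_m$ and of $\ed t_{\ol n}$ in \eqref{eq:inhomKF} thus gives
$$\del_{t_m}\bmcs=-[U_m,\bmcs]+\bdot{U}_m,\qquad \del_{\tbar{n}}\bmcs=[\Uba^t_n,\bmcs]-\bdot{\Uba^t_n}.$$
Next I would multiply through by $\lambda^{-2k}$ and substitute $\lambda^{-2k}\bmcs=\mcs_k+\mcs_{(k+1)}\in\bmg_{\leq-1}+^{vs}\bmg_{\geq 0}$. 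Because $\del_{t_m}$ and $\del_{\tbar{n}}$ are $\lambda$-independent they preserve this splitting, so the $\bmg_{\leq-1}$-parts of the two left-hand sides are exactly $\del_{t_m}\mcs_k$ and $\del_{\tbar{n}}\mcs_k$; applying $(\cdot)_{\leq-1}$ to the right-hand sides then produces \eqref{eq:delSk}, once one checks which terms survive.

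That last point is the only place any care is needed, and it is routine. From the explicit $\lambda$-degree ranges in \eqref{eq:YUm} one sees $U_m$ is $\bmg_{\leq-1}$-valued, hence $\Uba^t_n$ is $\bmg_{\geq 1}$-valued. Then $[U_m,\mcs_k]\in\bmg_{\leq-1}$ because $\bmg_{\leq-1}$ is a Lie subalgebra, so $[U_m,\mcs_k]_{\leq-1}=[U_m,\mcs_k]$; and $[\Uba^t_n,\mcs_{(k+1)}]\in[\bmg_{\geq 1},\bmg_{\geq 0}]\subseteq\bmg_{\geq 1}$, which has vanishing $\bmg_{\leq-1}$-part, so this bracket drops out of the second identity. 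The remaining terms $[U_m,\mcs_{(k+1)}]$, $\lambda^{-2k}\bdot{U}_m$, and $\lambda^{-2k}\bdot{\Uba^t_n}$ are retained under $(\cdot)_{\leq-1}$, giving exactly \eqref{eq:delSk}. So the proof is a direct consequence of \eqref{eq:inhomKF} together with the subalgebra/ideal structure in \eqref{eq:vsdecomp}; the only thing to watch is the bookkeeping of $\lambda$-weights in each bracket, and I do not anticipate any genuine obstacle.
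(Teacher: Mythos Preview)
Your proof is correct and follows essentially the same route as the paper's: extract the $\ed t_m$- and $\ed t_{\ol n}$-components of the spectral Killing field equation \eqref{eq:inhomKF}, multiply by $\lambda^{-2k}$, and project onto $\bmg_{\leq -1}$. The paper's proof simply states these steps and says ``Collect the $\bmg_{\leq -1}$-terms''; your additional paragraph spelling out which brackets survive the projection (using that $U_m\in\bmg_{\leq -1}$, $\Uba^t_n\in\bmg_{\geq 1}$, and the subalgebra structure of \eqref{eq:vsdecomp}) is exactly the bookkeeping the paper leaves implicit.
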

\begin{proof}
From the spectral Killing field equation
$$\ed\bmcs+[\bphi,\bmcs]=\bdot{\bphi},$$
we have
\begin{align}
\del_{t_m}  (\mcs_k+\mcs_{(k+1)})&=-[U_m,\mcs_k+\mcs_{(k+1)}]+\lambda^{-2k}\bdot{U}_m,\n\\
\del_{\tbar{n}}  (\mcs_k+\mcs_{(k+1)})&=+[\Uba^t_n,\mcs_k+\mcs_{(k+1)}]
-\lambda^{-2k}\bdot{\Uba^t_n}.\n
\end{align}
Collect the $\bmg_{\leq -1}$-terms.
\end{proof}
\begin{lem}
\begin{align}\label{eq:delUm}
\dels{k}U_m&=-[\mcs_k, U_m]-[\mcs_k, U_{(m+1)}]_{\leq -1}
-(2m+2k+1)U_{k+m}-\bdot{U}_{k+m},\\
\delsb{\ell}U_m&=+[\ol{\mcs}^t_{\ell},U_m]_{\leq -1}
+\left(\lambda^{2\ell}((2m+1)U_m+ \bdot{U}_m)\right)_{\leq -1}  \n  \\
&=+[\ol{\mcs}^t_{\ell},U_m]_{\leq -1}
+(2m-2\ell+1)U_{m-\ell}+\bdot{U}_{m-\ell}. \n
\end{align}
\end{lem}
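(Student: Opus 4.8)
The strategy is to read both formulas of \eqref{eq:delUm} straight off the $\dels{k}$- and $\delsb{\ell}$-rows of the extended structure equation \eqref{eq:Y+} for $\bY$, using the level-$m$ decomposition \eqref{eq:YUm}, $\bY=2\im\lambda^{2m+2}(U_m+U_{(m+1)})$. This identity identifies $U_m$ with the $\bmg_{\leq-1}$-component of the fixed rescaling $\tfrac{1}{2\im}\lambda^{-(2m+2)}\bY$ in the sense of \eqref{eq:vsdecomp}; symbolically $U_m=\big(\tfrac{1}{2\im}\lambda^{-(2m+2)}\bY\big)_{\leq-1}$, and $U_{(m+1)}$ is its $\bmg_{\geq0}$-part. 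Since the flows $\dels{k},\delsb{\ell}$ differentiate only the jet coordinates $a^{2n+1},b^{2n+2},c^{2n+2}$ and not $\lambda$, they commute with multiplication by a fixed power of $\lambda$ and with the $\bmg_{\leq-1}/\bmg_{\geq0}$ projections, so $\dels{k}U_m=\big(\tfrac{1}{2\im}\lambda^{-(2m+2)}\,\dels{k}\bY\big)_{\leq-1}$, and similarly for $\delsb{\ell}$.

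First I would substitute the $\dels{k}\bY$-row of \eqref{eq:Y+} and re-express everything through the $U$'s. Using $\tfrac{1}{2\im}\lambda^{-(2m+2)}[\mcs_k,\bY]=[\mcs_k,U_m+U_{(m+1)}]$ and the Euler-operator identity $\tfrac{1}{2\im}\lambda^{-(2m+2)}\bdot{\bY}=(2m+2)(U_m+U_{(m+1)})+\bdot{U}_m+\bdot{U}_{(m+1)}$, which comes from $\mcd(\lambda^{2m+2})=(2m+2)\lambda^{2m+2}$ and $\mcd$ being a derivation, one gets
$$
\dels{k}U_m=\Big(-[\mcs_k,U_m+U_{(m+1)}]-\lambda^{-2k}\big((2m+1)(U_m+U_{(m+1)})+\bdot{U}_m+\bdot{U}_{(m+1)}\big)\Big)_{\leq-1},
$$
with the analogous expression for $\delsb{\ell}U_m$ obtained by replacing $-\lambda^{-2k}$ by $+\lambda^{2\ell}$ and $[\mcs_k,\,\cdot\,]$ by $[\ol{\mcs}^t_{\ell},\,\cdot\,]$.

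The heart of the argument is a ``level-shift'' identity. Since $\lambda^{-2k}\cdot\tfrac{1}{2\im}\lambda^{-(2m+2)}\bY=\tfrac{1}{2\im}\lambda^{-(2(m+k)+2)}\bY$, comparing the level-$m$ and level-$(m+k)$ decompositions gives $\lambda^{-2k}(U_m+U_{(m+1)})=U_{m+k}+U_{(m+k+1)}$, hence $\big(\lambda^{-2k}(U_m+U_{(m+1)})\big)_{\leq-1}=U_{m+k}$; applying $\mcd$ to $\tfrac{1}{2\im}\lambda^{-(2m+2)}\bY$ and to $\tfrac{1}{2\im}\lambda^{-(2(m+k)+2)}\bY$ and subtracting gives $\lambda^{-2k}(\bdot{U}_m+\bdot{U}_{(m+1)})=\bdot{U}_{m+k}+\bdot{U}_{(m+k+1)}+2k(U_{m+k}+U_{(m+k+1)})$, whose $\bmg_{\leq-1}$-part is $\bdot{U}_{m+k}+2kU_{m+k}$. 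Absorbing the $2k$ into the scalar coefficient, $(2m+1)+2k=2m+2k+1$, produces the claimed term $-(2m+2k+1)U_{m+k}-\bdot{U}_{m+k}$. For the bracket terms I use the subalgebra structure in \eqref{eq:vsdecomp}: $\bmg_{\leq-1}$ is a Lie subalgebra and $\mcs_k,U_m\in\bmg_{\leq-1}$, so $[\mcs_k,U_m]$ needs no projection, whereas $[\ol{\mcs}^t_{\ell},U_{(m+1)}]_{\leq-1}=0$ because $\ol{\mcs}^t_{\ell}\in\bmg_{\geq1}$, $U_{(m+1)}\in\bmg_{\geq0}$ and $[\bmg_{\geq1},\bmg_{\geq0}]\subseteq\bmg_{\geq1}\subseteq\bmg_{\geq0}$. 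The $\delsb{\ell}$ computation is identical, now shifting from level $m$ to level $m-\ell$ (with the convention $U_j=\bdot{U}_j=0$ for $j<0$); the two displayed forms of $\delsb{\ell}U_m$ in \eqref{eq:delUm} agree because $\lambda^{2\ell}U_{(m+1)}$ and $\lambda^{2\ell}\bdot{U}_{(m+1)}$, being a non-negative power of $\lambda$ times a $\bmg_{\geq0}$-valued function, stay in $\bmg_{\geq0}$ and so drop out of $(\,\cdot\,)_{\leq-1}$.

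The main obstacle is purely bookkeeping: keeping the two uses of $\mcd$ apart — as the Euler operator generating the explicit $\lambda$-weight corrections inside $\bY$, versus its role as the generator defining the flows $\dels{k},\delsb{\ell}$ in the first place — and tracking exactly which commutators and which $\lambda$-powers survive the $\bmg_{\leq-1}$-truncation. No input beyond \eqref{eq:Y+} (equivalently the spectral Killing field equation \eqref{eq:inhomKF}), the decomposition \eqref{eq:vsdecomp}, and the subalgebra properties just used is needed; an equivalent route, parallel to the derivation of \eqref{eq:delSk}, is to project the $\dels{k}$-flowed Killing field relation for $\bY$ directly onto its $\bmg_{\leq-1}$-component, and that is the version I would write out in full.
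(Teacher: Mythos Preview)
Your proposal is correct and follows essentially the same route as the paper: start from the $\dels{k}$- (resp.\ $\delsb{\ell}$-) row of \eqref{eq:Y+}, substitute the decomposition \eqref{eq:YUm}, and project onto $\bmg_{\leq -1}$. The only cosmetic difference is that where you derive the ``level-shift'' identity $\lambda^{-2k}(U_m+U_{(m+1)})=U_{k+m}+U_{(k+m+1)}$ and its $\mcd$-derivative, the paper shortcuts this by directly substituting the level-$(k+m)$ decomposition $\bY=2\im\lambda^{2k+2m+2}(U_{k+m}+U_{(k+m+1)})$ into the $\lambda^{-2k}(\bdot{\bY}-\bY)$ term; your explicit justification of which brackets survive the $(\,\cdot\,)_{\leq -1}$ projection (via $\mcs_k,U_m\in\bmg_{\leq -1}$ and $\ol{\mcs}^t_{\ell}\in\bmg_{\geq 1}$) is a welcome addition that the paper leaves implicit.
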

\begin{proof}
From the third equation of \eqref{eq:Y+}, substitute $\bY=2\im\lambda^{2m+2}(U_m+U_{(m+1)})$
for the first two $\bY$'s, and  $\bY=2\im\lambda^{2k+2m+2}(U_{k+m}+U_{(k+m+1)})$ 
for the next $\bdot{\bY}, \bY$.
We have
$$\dels{k} (U_m+U_{(m+1)})=-[\mcs_k, U_m+U_{(m+1)}]
-(2m+2k+1)(U_{k+m}+U_{(k+m+1)})-(\bdot{U}_{k+m}+\bdot{U}_{(k+m+1)}).$$
Collect the $\bmg_{\leq -1}$-terms for $\dels{k} U_m.$
The formula for $\delsb{\ell}U_m$ is obtained in a similar way.
\end{proof}
\begin{lem}
\be\label{eq:SkUm}
[\mcs_k,U_m]+\left([\mcs_k,U_{(m+1)}]+[\mcs_{(k+1)},U_m]\right)_{\leq -1}
=-(2k+2m+1)U_{k+m}-\bdot{U}_{k+m}.
\ee
\end{lem}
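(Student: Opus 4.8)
The plan is to derive \eqref{eq:SkUm} as a purely algebraic consequence of the commutation relation \eqref{eq:keyformula} for $\bmcs$, exactly as \eqref{eq:pmrelationS}--\eqref{eq:pmrelationS'} were obtained, but now applied to $U_m$ rather than to $\bY$ directly. Recall $\bY=2\im\lambda^{2m+2}(U_m+U_{(m+1)})$, so $\lambda^{-1}\bY = 2\im\lambda^{2m+1}(U_m+U_{(m+1)})$. First I would multiply \eqref{eq:keyformula}, namely $[\bmcs,\lambda^{-1}\bY]+\bdot{(\lambda^{-1}\bY)}=0$, by $\lambda^{-2k}$ and write $\lambda^{-2k}\bmcs = \mcs_k+\mcs_{(k+1)}$. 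Expanding $\bdot{(\lambda^{-1}\bY)} = \mcd(2\im\lambda^{2m+1}(U_m+U_{(m+1)})) = 2\im\lambda^{2m+1}\big((2m+1)(U_m+U_{(m+1)}) + (\bdot{U}_m+\bdot{U}_{(m+1)})\big)$, and then dividing through by the overall factor $2\im\lambda^{2m+1}$, this gives
\be
[\mcs_k+\mcs_{(k+1)},\ U_m+U_{(m+1)}] + (2k+2m+1)(U_{k+m}+U_{(k+m+1)}) + (\bdot{U}_{k+m}+\bdot{U}_{(k+m+1)})=0,\n
\ee
after reindexing the shifted decomposition: note $\lambda^{-2k}\cdot 2\im\lambda^{2m+1}(U_m+U_{(m+1)})$ must be matched against $2\im\lambda^{2(k+m)+1}(U_{k+m}+U_{(k+m+1)})$, so the $(2m+1)$ from the Euler derivative combines with the $\lambda^{-2k}$-shift to produce the coefficient $2(k+m)+1$ on the re-decomposed pieces. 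This is the same bookkeeping already carried out in the proof of \eqref{eq:delUm}.

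Next I would extract the $\bmg_{\leq -1}$-part of this identity. The terms $U_{k+m}$ and $\bdot{U}_{k+m}$ are by definition $\bmg_{\leq -1}$-valued, while $U_{(k+m+1)}$ and $\bdot{U}_{(k+m+1)}$ are $\bmg_{\geq 0}$-valued, so on the negative side they drop out. For the bracket term, $[\mcs_k,U_m]$ contributes its full $\bmg_{\leq -1}$-projection; the three remaining cross terms $[\mcs_k,U_{(m+1)}]$, $[\mcs_{(k+1)},U_m]$, and $[\mcs_{(k+1)},U_{(m+1)}]$ each need their $\bmg_{\leq -1}$-projections. The key point, exactly as in \eqref{eq:pmrelationS}, is that $[\mcs_{(k+1)},U_{(m+1)}]$ is a bracket of two $\bmg_{\geq 0}$-valued objects, hence lies in $\bmg_{\geq 0}$ and has no $\bmg_{\leq -1}$-part. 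This leaves precisely $[\mcs_k,U_m] + \big([\mcs_k,U_{(m+1)}]+[\mcs_{(k+1)},U_m]\big)_{\leq -1} = -(2k+2m+1)U_{k+m}-\bdot{U}_{k+m}$, which is \eqref{eq:SkUm}.

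The main obstacle, such as it is, is purely the index bookkeeping in the re-decomposition step: one must be careful that when $\lambda^{-2k}\bmcs$ acts on the degree-$(2m+2)$ truncation decomposition of $\bY$, the resulting pieces are re-split at degree $2(k+m+1)$, not at $2(m+1)$, and that the Euler-derivative coefficient shifts accordingly from $2m+1$ to $2(k+m)+1$. This is the same subtlety the earlier lemmas handle, so I would simply invoke the conventions of \eqref{eq:YUm} and \eqref{eq:delUm} rather than re-derive them. I also note that although the statement of the lemma (via \S\ref{sec:identities}) allows $k\geq 0$, the formula is stated as an algebraic identity independent of the truncation, so no appeal to $k\geq N+1$ is needed here. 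One should double-check that $[\mcs_{(k+1)},U_m]$ genuinely has a nonzero $\bmg_{\leq -1}$-part in general (it does, since $U_m$ contains negative $\lambda$-powers), which is why that cross term cannot be dropped and must be retained inside the projection.
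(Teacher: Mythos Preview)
Your proposal is correct and follows essentially the same route as the paper: start from the spectral identity $[\bmcs,\bY]=\bY-\bdot{\bY}$ (equivalently \eqref{eq:keyformula}), substitute the decompositions $\lambda^{-2k}\bmcs=\mcs_k+\mcs_{(k+1)}$ and $\bY=2\im\lambda^{2(k+m)+2}(U_{k+m}+U_{(k+m+1)})$, and then collect the $\bmg_{\leq -1}$-terms, using that $[\mcs_{(k+1)},U_{(m+1)}]\in\bmg_{\geq 0}$. Your write-up is in fact more explicit than the paper's about why $[\mcs_k,U_m]$ needs no projection and why the $\geq 0$ cross term vanishes.
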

\begin{proof}
From the spectral identity $[\bmcs,\bY]=\bY-\bdot{\bY}$,
we have
$$[\mcs_k+\mcs_{(k+1)}, U_m+U_{(m+1)}]
=\frac{1}{2\im}\lambda^{-(2k+2m+2)}(\bY-\bdot{\bY}).$$
Substitute $\bY=2\im\lambda^{2k+2m+2}(U_{k+m}+U_{(k+m+1)})$, then the RHS becomes
$$\tn{RHS}=-(\bdot{U}_{k+m}+\bdot{U}_{(k+m+1)})
-(2k+2m+1)(U_{k+m}+U_{(k+m+1)}).$$
Collect the $\bmg_{\leq -1}$-terms.
\end{proof}

\subb{$[\del_{t_m},\dels{k}]\bY=0$}\label{sec:tmskY}
We compute $\dels{k}\del_{t_m}\bY$, and  $\del_{t_m}\dels{k}\bY$ in turn
using the identities \eqref{eq:delSk}, \eqref{eq:delUm}. 
Then,
\begin{align}\label{eq:sktm}
\dels{k}\del_{t_m}\bY&=\left[ 
[\mcs_k,U_m]+[\mcs_k,U_{(m+1)}]_{\leq -1}+(2m+2k+1)U_{k+m}+\bdot{U}_{k+m},
\bY \right] \\
&\quad+\left[ U_m, [\mcs_k,\bY]+\lambda^{-2k}(\bdot{\bY}-\bY)\right].\n
\end{align}
And,
\begin{align}\label{eq:tmsk}
\del_{t_m}\dels{k}\bY&=\left[ 
[U_m,\mcs_k]+[U_m, \mcs_{(k+1)}]_{\leq -1}-(\lambda^{-2k} \bdot{U}_m)_{\leq -1},
\bY \right]\\
&\quad+\left[\mcs_k, [U_m,\bY]\right]\n\\
&\quad-\lambda^{-2k}[U_m,\bY]+\lambda^{-2k}\left([\bdot{U}_m,\bY]+[U_m,\bdot{\bY}]\right).\n
\end{align}
Take the difference \eqref{eq:tmsk}$-$\eqref{eq:sktm} using \eqref{eq:SkUm},
and one gets
$$[\del_{t_m},\dels{k}]\bY=[(\lambda^{-2k} \bdot{U}_m)_{\geq 0},\bY].$$
Since $k\geq 0$ and $\bdot{U}_m$ is $\bmg_{\leq -1}$-valued,
$(\lambda^{-2k} \bdot{U}_m)_{\geq 0}=0$ and the claims follows.

\section{Extended CMC hierarchy}\label{sec:CMC+}
The identity \eqref{eq:pmrelationS} is suggestive of
how to define the $\hbmg^{\pm}_{N+1}$-valued extension of $\bphi$ 
and  package the equations \eqref{eq:Y+}
into a $\hbmg^{\pm}_{N+1}$-valued Killing field equation.
We shall follow this idea and propose an extension of the CMC hierarchy
in terms of the  $\hbmg^{\pm}_{N+1}$-valued Killing field equations for 
the affine lifts $\hat{\bY}, \hbmcs$.
The $\sigmab_{l}, \sigma_{k}$-derivatives of $\bmcs$, the affine extension part, 
will follow from this (for free).

As remarked earlier, see below Rmk.\ref{rem:compare},
the derivation part of the underlying Lie algebra $\hbmg^{\pm}_{N+1}$
is the Virasoro algebra $\Vir^{\pm}_{N+1}$ 
which is truncated from below.
This is partly due to the fact that $\bmcs$ contains the intermediate portion
which does not satisfy the Killing field equation (inhomogeneous portion).
We will find analytically that 
this truncation of the Virasoro algebras is  dictated by the constraints
 from the compatibility equations for the extended CMC hierarchy,
\eqref{eq:truncationcontr}.

\sub{Definition}
Set the affine lifts of the Killing fields by
\begin{align}
\hbY&:=(0, \lambda^{-1}\bY), \qquad \tn{($\lambda \hbmg^{\pm}_{N+1}$-valued)}\\
\hbmcs_{\pm}&:=(\tne^{u_{\pm}}, \tne^{u_{\pm}}\bmcs),\n  \qquad \tn{($\hbmg^{\pm}_{N+1}$-valued)}
\end{align}
where the conformal factors $u_{\pm}$ are to be determined.
Set the $\hbmg^{\pm}_{N+1}$-valued extended Maurer-Cartan forms by\ftmark\fttext{Here  it is understood that $\sigmab_{\ell}, \sigma_k=0,\;\; \forall \ell, k \leq N.$}
\begin{align}
&\hspace{5.4cm}\bPhi_{\pm}:=(\bsigma_{\pm}, \;\;\hbphi_{\pm}),&&\\
&\qquad \bsigma_+:=\sum_{\ell=0}^{\infty} -\lambda^{2\ell}\sigmab_{\ell}, 
&&\bsigma_-:=\sum_{k=0}^{\infty} \lambda^{-2k}\sigma_k, \n\qquad \\
&\qquad \hbphi_+:= \sum_{\ell=0}^{\infty}-\ol{\mcs}^t_{\ell}\sigmab_{\ell}
+\bphi,  
&&\hbphi_-:=\bphi+\sum_{k=0}^{\infty}\mcs_k\sigma_k.\n\qquad
\end{align}
\begin{defn}\label{defn:eCMC}
The  \tb{extended CMC hierarchy} is the system of differential equations,
\begin{align} 
\ed \hbY +[\bPhi_{\pm},  \hbY]&=0, \label{eq:Yeq}    \\
\ed \hbmcs_{\pm} +[\bPhi_{\pm}, \hbmcs_{\pm} ]&=0, \label{eq:Seq}  \\
\ed\bPhi_{\pm}+\frac{1}{2}[\bPhi_{\pm},\bPhi_{\pm}]&=0. \label{eq:Phieq} 
\end{align}
It states that $\hbY, \hbmcs_{\pm}$ satisfy the Killing field equation
for the $\hbmg^{\pm}_{N+1}$-valued extended Maurer-Cartan forms $\bPhi_{\pm},$
and that $\bPhi_{\pm}$ satisfy the compatibility equation.
\end{defn}
We mention that, modulo the additional $\sigmab_{\ell}, \sigma_k$-terms, 
these equations agree with the affine lifts
described in \S\ref{sec:affinelift}.

\two
We are now ready to state the main theorem of this paper.
\begin{thm}\label{thm:main}
The system of equations \eqref{eq:Yeq}, \eqref{eq:Seq}, \eqref{eq:Phieq} 
for the extended CMC hierarchy  is compatible, i.e., 
$\ed^2=0$ is a formal consequence of the structure equation.
\end{thm}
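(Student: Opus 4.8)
The plan is to verify $\ed^2 = 0$ componentwise, separating the derivation (Virasoro) component from the loop-algebra component of each equation, and to exploit the fact that most of the pairwise compatibilities have already been reduced to identities proved in the excerpt. Concretely, the three equations \eqref{eq:Yeq}, \eqref{eq:Seq}, \eqref{eq:Phieq} are each $\hbmg^{\pm}_{N+1}$-valued, so compatibility amounts to checking that the mixed partial derivatives agree for every pair of flow directions. The flow directions come in four types: the $\del_{t_m}$ and $\del_{\tba_n}$ (classical CMC times, $0\le m,n\le N$), and the $\dels{k}$, $\delsb{\ell}$ (Virasoro directions, $k,\ell\ge N+1$). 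I would organize the check as a table of bracket types: (time, time) — this is the compatibility of the base truncated CMC hierarchy, already recorded in \S\ref{sec:formulas}; (time, Virasoro) — this is precisely $[\del_{t_m},\dels{k}]\bY = 0$ and its conjugate, verified in \S\ref{sec:tmskY}, plus the analogous statements for $\bmcs$ and for $\bphi$; and (Virasoro, Virasoro) — the Virasoro bracket relations \eqref{eq:ssbracket}, which must be shown to hold as operator identities on $\bY$, $\bmcs$, and $\bphi$.

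First I would extract, from \eqref{eq:Yeq}--\eqref{eq:Phieq}, the explicit loop-algebra components: equation \eqref{eq:Phieq} splits into $\ed\bsigma_{\pm}+\bsigma_{\pm}\w\bdot{\bsigma_{\pm}}=0$ (the truncated Virasoro structure equation \eqref{eq:Virstrt}, automatically compatible by \eqref{eq:sigmastrt}) together with a loop-algebra part asserting $\ed\hbphi_{\pm}+\hbphi_{\pm}\w\hbphi_{\pm}+(\text{action of }\bsigma_{\pm})=0$; unpacking the $\sigmab_\ell$- and $\sigma_k$-coefficients, this is equivalent to the base structure equation \eqref{Eqbphi} plus, for each $k\ge N+1$, the relation $\dels{k}\bphi = -[\mcs_k,\bphi] + \bdot{(\cdots)}$ that is forced by the spectral Killing field equation \eqref{eq:inhomKF} and the decomposition $\lambda^{-2k}\bmcs = \mcs_k + \mcs_{(k+1)}$. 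Similarly, the loop-algebra component of \eqref{eq:Yeq} reproduces exactly the extended system \eqref{eq:Y+} for $\bY$, and the loop component of \eqref{eq:Seq} gives the $\dels{k}$- and $\delsb{\ell}$-derivatives of $\bmcs$ "for free," as the paper indicates. So the content of the theorem is: (i) these derivative formulas are self-consistent (the Virasoro directions commute appropriately among themselves), and (ii) they commute with the base CMC flows. Item (ii) for $\bY$ is done in \S\ref{sec:tmskY}; I would carry out the identical computation for $\bmcs$ using the analogues of Lemmas \eqref{eq:delSk}, \eqref{eq:delUm}, \eqref{eq:SkUm}, which follow from the spectral identities \eqref{eq:mus} and the algebraic formula \eqref{eq:Rformula} in the same way.

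The crux — and the step I expect to be the main obstacle — is item (i), the Virasoro bracket relations $[\dels{j},\dels{k}]\bY = (2j-2k)\dels{j+k}\bY$ and likewise on $\bmcs$, $\bphi$ (and the barred versions). Here the truncation parameter $N$ enters essentially: one computes $\dels{j}\dels{k}\bY$ by substituting the third line of \eqref{eq:Y+} twice, using the decomposition formulas for $\mcs_k$ and the identity \eqref{eq:SkUm}, and the cross-terms of the form $[\mcs_j,\mcs_k]$ must reassemble — via the key commutation relation \eqref{eq:keyformula} (equivalently the $[\bY,\bmcs]$ relation in \eqref{eq:mus}) and the Jacobi identity — into $\mcs_{j+k}$ with the correct coefficient $(2j-2k)$. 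The Euler-operator terms $\lambda^{-2k}(\bdot{\bY}-\bY)$ contribute the "anomalous" pieces; tracking their $\lambda$-degrees is exactly where one sees that the Virasoro algebra must be truncated from below at $N+1$ — the constraint \eqref{eq:truncationcontr} — so that no spurious $\bmg_{\le -1}$ versus $\bmg_{\ge 0}$ mismatch survives (the same mechanism by which $(\lambda^{-2k}\bdot{U}_m)_{\ge 0}=0$ closed the argument in \S\ref{sec:tmskY}). Once these degree bookkeeping facts are in hand, the remaining computation is a direct, if lengthy, expansion; I would present the $\dels{}$-side in detail and invoke the formal conjugate transpose $\lambda\mapsto\lambda^{-1}$ to obtain the $\delsb{}$-side and the mixed $[\dels{k},\delsb{\ell}]$ vanishing, noting that the latter holds because $\dels{k}$ acts trivially on the barred data and conversely (since $\del_{\tba_n}\bzeta=0$ and the corresponding statement for $\bmcs$).
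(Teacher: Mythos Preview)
Your overall strategy---organize by pairs of flow types and check mixed partials---matches the paper's, and your identification of the (time, time) and $[\del_{t_m},\dels{k}]$ pieces is correct. But there is a genuine gap and a related misattribution.

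You omit the cross-type pairing $[\del_{t_m},\delsb{\ell}]$ (unbarred time with barred Virasoro, or by conjugation $[\del_{\tba_n},\dels{k}]$). Your phrase ``and its conjugate'' after $[\del_{t_m},\dels{k}]\bY=0$ only gives $[\del_{\tba_n},\delsb{\ell}]$, not this mixed pairing. In the paper's proof (\S\ref{sec:proof}) this is exactly the $\ed t_m\w\sigmab_{\ell}$ check, and it is \emph{the} place where the truncation constraint enters: the computation leaves a residual term $(\lambda^{2\ell}U_m)_{\leq -1}$, which vanishes only because $\ell\geq N+1$ and the $t_N$-truncation bounds the $\lambda$-degree of $U_m$ below by $-(2N+1)$. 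This is precisely \eqref{eq:truncationcontr}. You instead locate the truncation constraint in the Virasoro--Virasoro bracket, but that identity (the paper's Lemma giving \eqref{eq:Smixed}) holds for all $j,k\geq 0$ with no truncation needed---the $\sigma_j\w\sigma_k$ compatibility closes purely from the trivial relation $[\lambda^{-2j}\bmcs,\lambda^{-2k}\bmcs]=0$ and the structure equation for $\bmcs$.

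Two smaller corrections. First, in the paper's setup the $\dels{k}$ and $\delsb{\ell}$ directions never coexist: one works with $\hbmg^{+}_{N+1}$ \emph{or} $\hbmg^{-}_{N+1}$, so your proposed $[\dels{k},\delsb{\ell}]$ check is vacuous. Second, your claim that ``$\dels{k}$ acts trivially on the barred data'' is false---see the right column of \eqref{eq:Y+}, where $\dels{k}\ol{\bY}^t$ is explicitly nonzero. The paper avoids separate checks on $\bY$ and $\bmcs$ by first observing that $\ed(\tn{Eq.}\eqref{eq:dY+2})$ and $\ed(\tn{Eq.}\eqref{eq:dS+2})$ vanish modulo the Maurer--Cartan equation \eqref{eq:hbphi+} for $\hbphi$, so everything reduces to verifying \eqref{eq:hbphi+} coefficientwise; you may find this cleaner than repeating the $\bY$ computation for $\bmcs$.
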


Before we proceed to the proof, let us introduce a convention for  simplified notations.
\sub{Notation}
For a uniform treatment, we introduce  the dummy notations 
$( u, \hbmcs,  \bsigma,  \hbphi, \bPhi)$  without   $\pm$-sign as follows.

\two
Set
\be
\hbmcs:=(\tne^{u }, \tne^{u}\bmcs),\n  
\ee
without $\pm$-sign, where the conformal factor $u$ is to be determined.
Set 
\begin{align}\label{eq:Phinotation}
\bPhi&:=(\bsigma, \;\;\hbphi),&&\\
 \bsigma&:=\sum_{\ell=0}^{\infty} -\lambda^{2\ell}\sigmab_{\ell} 
+\sum_{k=0}^{\infty} \lambda^{-2k}\sigma_k, \n\qquad \\
\hbphi&:= \sum_{\ell=0}^{\infty}-\ol{\mcs}^t_{\ell}\sigmab_{\ell}
+\bphi+\sum_{k=0}^{\infty}\mcs_k\sigma_k.\n\qquad
\end{align}
It is understood that either
$$\begin{cases}
&\tn{$\sigma_k=0, \,\forall \,k,$ and all these objects are  with $(+)$-sign}, \\ 
&\quad\tn{or} \\
&\tn{$\sigmab_{\ell}=0, \,\forall \,\ell,$ and all these objects are   with $(-)$-sign}.
\end{cases}$$
We also use the dummy notation  for
the truncated Virasoro algebra $\Vir_{N+1}$,  
and the affine Kac-Moody algebra $\hbmg_{N+1}$, etc.

\sub{Extended structure equation for $\bY$}
We check that Eq.\eqref{eq:Yeq} for $\hbY$ agrees with 
Eq.\eqref{eq:Y+}.

\two
By definition of the Lie bracket for $\hbmg_{N+1}$, Eq.\eqref{eq:Yeq} becomes
$$\ed (\lambda^{-1}\bY)+[\hbphi,\lambda^{-1}\bY]
+\bdot{(\lambda^{-1}\bY)}\bsigma=0.$$
Multiplying by $\lambda$, we get
\be\label{eq:dY+}
\ed \bY +[\hbphi,\bY]
+(\bdot{ \bY}-\bY)\bsigma=0.\ee
This is equivalent to Eq.\eqref{eq:Y+}.
\sub{Extended structure equation for  $(u, \pmb{\mcs})$}\label{sec:eS}
We expand Eq.\eqref{eq:Seq} to the structure equations for $(u,\bmcs)$.
\subb{Conformal factor $u$} 
The derivation part of \eqref{eq:Seq} gives
$$\ed(\tne^u)\tcr{-}\tne^u\bdot{\bsigma}\tcr{+}\bdot{(\tne^u)}\bsigma=0.
$$
Multiplying  by $\tne^{-u}$, we get
\be\label{eq:u}
\ed u =\tcr{+}\bdot{\bsigma}\tcr{-}\bdot{u}\bsigma.
\ee
Recall the structure equation for  $\Vir_{N+1}$,
\be\label{eq:sigmastrt2}
\ed\bsigma+\bsigma\w\bdot{\bsigma}=0.
\ee
Eq.\eqref{eq:u} is compatible  with this structure equation.

\two

\subb{Spectral Killing field $\bmcs$}
The loop algebra part of  \eqref{eq:Seq} gives
$$\ed(\tne^u\bmcs)+[\hbphi,\tne^u\bmcs]+\bdot{(\tne^u\bmcs)}\bsigma-\tne^u\bdot{\hbphi}=0.$$
Multiplying by $\tne^{-u}$, we get
\begin{align} 
0&=\ed\bmcs+[\hbphi,\bmcs]+\bmcs\ed u +(\bdot{\bmcs}+\bdot{u}\bmcs)\bsigma-\bdot{\hbphi}\n\\
&=\ed\bmcs+[\hbphi,\bmcs]+\bmcs(\tcr{+}\bdot{\bsigma}\tcr{-}\bdot{u}\bsigma) +(\bdot{\bmcs}+\bdot{u}\bmcs)\bsigma-\bdot{\hbphi}.\n
\end{align}
The extended structure equation for $\bmcs$ becomes
\be\label{eq:dS+}
\ed\bmcs+[\hbphi,\bmcs]+(\tcr{+}\bmcs\bdot{\bsigma}  +\bdot{\bmcs}\bsigma)=\bdot{\hbphi}.
\ee
\sub{Structure equation for $\hbphi$}
The loop algebra part of \eqref{eq:Phieq}  gives the following structure equation for $\hbphi$:
\be\label{eq:hbphi+0}
\ed\hbphi +\hbphi\w\hbphi=\bdot{\hbphi}\w\bsigma.
\ee

\two
A direct computation shows that
$$\ed(\tn{Eq.}\eqref{eq:dY+}), \ed(\tn{Eq.}\eqref{eq:dS+}) 
\equiv 0\mod\; \eqref{eq:sigmastrt2}, \eqref{eq:hbphi+0}.$$
In turn, it will be shown that Eq.\eqref{eq:hbphi+0} is compatible\ftmark
\fttext{In Eq.\eqref{eq:hbphi+0}, the terms of $\lambda$-degree $\ne 0$ are identity modulo 
Eqs.\eqref{eq:dY+}, \eqref{eq:sigmastrt2}, \eqref{eq:dS+}. 
The remaining terms of $\lambda$-degree $0$ determine  the structure equation 
for $\ed\rho$, which is compatible with  Eqs.\eqref{eq:dY+}, \eqref{eq:sigmastrt2}, \eqref{eq:dS+}. }
with Eqs.\eqref{eq:dY+}, \eqref{eq:sigmastrt2}, \eqref{eq:dS+}.

\section{Additional affine Killing fields}\label{sec:addaffine}
In addition to the affine extension for $\bY, \bmcs$,
we  record in this section 
the affine extension for the additional Killing fields $\bP_{\pm}$,
the dressed Killing fields $\bV_{\pm}$, and the dressed normalized spectral Killing field $\cbmcs$. 
As a result, it will be shown that
the entire dressing process  from $(\bY, \bV_{\pm}, \cbmcs)$ to
$(\bY, \bP_{\pm}, \bmcs)$, and the algebraic formulas \eqref{eq:Rcformula}, \eqref{eq:RVformula}  for $\cbmcs, \bmcs$ admit the compatible affine extension,
while preserving the Lie bracket relations among $\{\bY, \bV_{\pm}, \bP_{\pm}, \cbmcs, \bmcs\}$.

\sub{$\det(\bY)$}
We claim that the determinant constraint
\be\label{eq:detY}
\det(\bY)=-4\gamma\lambda^2
\ee
is compatible with the extended structure equation \eqref{eq:dY+}.
We can therefore continue to impose \eqref{eq:detY} for the extended CMC hierarchy.

\two
In order to verify this, note from \eqref{eq:dY+} the identity
$$\ed\left(\lambda^{-1}\bY\right)^2=-\bdot{\left(\lambda^{-1}\bY\right)^2 }\bsigma.
$$
Here we used the fact that $\bY^2+\det(\bY)I_2=0$, for $\tr(\bY)=0$.
Form this, one finds
$$\ed\left(\det(\lambda^{-1}\bY) \right)=-\bdot{ \left(\det(\lambda^{-1}\bY) \right) }\bsigma.
$$
This is compatible with \eqref{eq:detY}.

\two
From now on, we continue to assume the constraint \eqref{eq:detY} for the extended CMC hierarchy.
This in particular allows one to   use the adjoint operator $\tn{ad}_{\bY}$
and its eigen-matrices as before  in the analysis of the (dressed) 
additional Killing fields for  the extended CMC hierarchy.

\sub{Extended structure equation for $\bP_{\pm}$}
The Lie bracket relations \eqref{eq:YPP} suggest the following
as the affine lifts of the additional Killing fields $\bP_{\pm}$:
\begin{align}\label{eq:hPpm}
\hbP_+&:=(0, \bP_+),  \\
\hbP_- &:=(0, \lambda^{-1}\bP_-).\n
\end{align}
The corresponding affine Killing field equations reduce to,
\begin{align}\label{eq:dP+2}
\ed \bP_+ +[\hbphi,\bP_+]+ \bdot{\bP}_+ \bsigma=0, \\
\ed \bP_- +[\hbphi,\bP_-]+(\bdot{ \bP}_--\bP_-)\bsigma=0.\n
\end{align}

It is easily checked that the extended structure equations \eqref{eq:dY+},  \eqref{eq:dP+2}
are compatible with the Lie bracket relations \eqref{eq:YPP} 
and the determinant formulas \eqref{eq:PPdet}.
Moreover, combined with the extended structure equation \eqref{eq:dS+} for $\bmcs$,
they are also compatible with the Lie bracket relations \eqref{eq:mus}.

\two
From now on, we continue to assume the algebraic relations \eqref{eq:YPP}, \eqref{eq:PPdet}, \eqref{eq:mus}  for the extended CMC hierarchy.

\sub{Extended dressing}
Next, we derive the extended structure equations for the dressed Killing fields
$\bV_{\pm}$, \eqref{eq:Vpm}. Then, it will be shown that
this affine extension is also compatible with the algebraic formulas \eqref{eq:Rcformula}, \eqref{eq:RVformula}  for  $\cbmcs, \bmcs$.

\two
Define $\cbphi$ by the equation
\be\label{eq:hbphi}
\hbphi-\cbphi:=\bY\alpha+\bmcs\bsigma.
\ee
This extends the identity \eqref{eq:Yidentity}, and $\cbphi$ defined here
can be considered as an affine extension of $\cbphi$ defined earlier for dressing.

\begin{lem}\label{lem:cbphi}$\,$
\benu[\qquad a)]
\item By definition,
\be\label{eq:cbphiY}
\ed\bY+[\cbphi,\bY]=0.\ee
\item
The $\bmg_{\geq 0}$-valued 1-form $\cbphi$ satisfies the structure equation
$$\ed\cbphi+\cbphi\w\cbphi=0.$$
\enu
\end{lem}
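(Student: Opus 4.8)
The plan is to deduce both assertions directly from the defining relation \eqref{eq:hbphi}, $\cbphi=\hbphi-\bY\balpha-\bmcs\bsigma$, together with the extended structure equations \eqref{eq:dY+}, \eqref{eq:dS+}, \eqref{eq:hbphi+0}, \eqref{eq:sigmastrt2} for $\bY,\bmcs,\hbphi,\bsigma$ and the spectral bracket identity \eqref{eq:mus}. The computation is parallel to the non-affine case, Lemma~\ref{eq:cbphicompat}, the only new feature being extra bookkeeping from the $\bmcs\bsigma$-term and from the Euler operator $\mcd$.

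For a) I would first observe that the $\bmg_{\leq -1}$-contributions of $\hbphi$ and of $\bmcs\bsigma$ cancel: in the dummy notation, using $\lambda^{-2k}\bmcs=\mcs_k+\mcs_{(k+1)}$ one gets $\hbphi_- -\bmcs\bsigma_-=\bphi-\sum_k\mcs_{(k+1)}\sigma_k$, hence $\cbphi=(\bphi-\bY\balpha)-\sum_k\mcs_{(k+1)}\sigma_k$ (and analogously for the positive type, via formal conjugation $\lambda\to\lambda^{-1}$). Since $\bphi-\bY\balpha$ is the dressing form \eqref{eq:cbphi}, which is $\bmg_{\geq 0}$-valued, and each $\mcs_{(k+1)}\in\bmg_{\geq 0}$, this exhibits $\cbphi$ as $\bmg_{\geq 0}$-valued. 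For the Killing field equation I would then expand $[\cbphi,\bY]=[\hbphi,\bY]-[\bY\balpha,\bY]-[\bmcs\bsigma,\bY]$; the middle term vanishes because $[\bY,\bY]=0$, while $[\bmcs\bsigma,\bY]=[\bmcs,\bY]\bsigma$ since $\bY$ is a $0$-form, and $[\bmcs,\bY]=\bY-\bdot{\bY}$ by \eqref{eq:mus} (equivalently \eqref{eq:keyformula}), which is an \emph{exact} identity under the normalization $4e^{\bsigma^+}\bbb\bbc=1$ that is in force. Therefore $\ed\bY+[\cbphi,\bY]=\ed\bY+[\hbphi,\bY]+(\bdot{\bY}-\bY)\bsigma$, which is $0$ by \eqref{eq:dY+} (a repackaging of \eqref{eq:Y+}).

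For b) I would set $\psi:=\bY\balpha+\bmcs\bsigma$ so that $\cbphi=\hbphi-\psi$ and
\[
\ed\cbphi+\cbphi\w\cbphi=(\ed\hbphi+\hbphi\w\hbphi)-\ed\psi-(\hbphi\w\psi+\psi\w\hbphi)+\psi\w\psi .
\]
By \eqref{eq:hbphi+0} the first bracket is $\bdot{\hbphi}\w\bsigma$. In $\ed\psi=(\ed\bY)\w\balpha+(\ed\bmcs)\w\bsigma+\bmcs\,\ed\bsigma$ I would substitute \eqref{eq:dY+} and \eqref{eq:dS+}: the two contributions $\pm\,\bmcs(\bsigma\w\bdot{\bsigma})$ — one from the $-\bmcs\bdot{\bsigma}$ term of $\ed\bmcs$, one from $\bmcs\,\ed\bsigma$ via \eqref{eq:sigmastrt2} — cancel, the $\bdot{\bmcs}\,\bsigma\w\bsigma$ term drops because $\bsigma\w\bsigma=0$, leaving $\ed\psi=-[\hbphi,\bY]\w\balpha-(\bdot{\bY}-\bY)(\bsigma\w\balpha)+\bdot{\hbphi}\w\bsigma-[\hbphi,\bmcs]\w\bsigma$. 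Next $\hbphi\w\psi+\psi\w\hbphi=[\hbphi,\bY]\w\balpha+[\hbphi,\bmcs]\w\bsigma$, the relative signs coming from commuting the scalar $1$-forms $\balpha,\bsigma$ past the matrix $1$-form $\hbphi$; and $\psi\w\psi=[\bY,\bmcs](\balpha\w\bsigma)=(\bdot{\bY}-\bY)(\balpha\w\bsigma)$, using $\balpha\w\balpha=\bsigma\w\bsigma=0$ and \eqref{eq:mus}. Substituting these four pieces back, the $\bdot{\hbphi}\w\bsigma$, the $[\hbphi,\bY]\w\balpha$, and the $[\hbphi,\bmcs]\w\bsigma$ terms each cancel in pairs, and the leftover is $(\bdot{\bY}-\bY)(\bsigma\w\balpha+\balpha\w\bsigma)=0$. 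Hence $\ed\cbphi+\cbphi\w\cbphi=0$.

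The only real hazard is sign bookkeeping: one must consistently track the signs produced when the scalar $1$-forms $\balpha,\bsigma,\bdot{\bsigma},\sigma_k,\sigmab_\ell$ are moved past matrix-valued $1$-forms, and one must use the exact identities of Lemma~\ref{lem:SYPcommute} (which need the normalization $4e^{\bsigma^+}\bbb\bbc=1$) rather than the ``mod Killing fields'' versions in \eqref{eq:YPPR}. Once those are handled, the cancellation is automatic, being forced by the structure equations \eqref{eq:dY+}, \eqref{eq:dS+}, \eqref{eq:hbphi+0}, \eqref{eq:sigmastrt2} already established for the extended CMC hierarchy.
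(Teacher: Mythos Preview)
Your proof is correct and follows essentially the same strategy as the paper's. Both arguments substitute the defining relation $\hbphi=\cbphi+\bY\balpha+\bmcs\bsigma$ into the Maurer--Cartan equation and watch the cross-terms cancel via \eqref{eq:dY+}, \eqref{eq:dS+}, \eqref{eq:sigmastrt2}, \eqref{eq:hbphi+0}; the only cosmetic difference is that the paper expands $\ed\hbphi+\hbphi\w\hbphi-\bdot{\hbphi}\w\bsigma$ in terms of $\cbphi$ (and invokes part a) already proved), whereas you expand $\ed\cbphi+\cbphi\w\cbphi$ in terms of $\hbphi$ and use \eqref{eq:mus} directly---the cancellations are the same ones read in opposite directions.
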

\begin{proof} 
a) Eq.\eqref{eq:dY+} can be written as,
\be
\ed\bY+[\hbphi-\bmcs\bsigma,\bY]=\ed\bY+[\cbphi+\bY\alpha,\bY]=0.\n\ee

b) Using \eqref{eq:cbphiY},
\begin{align*}
 \ed\hbphi+\hbphi\w\hbphi-\bdot{\hbphi}\w\bsigma&=  \ed\cbphi
-[\cbphi,\bY]\w\alpha-([\cbphi+\bY\alpha,\bmcs]+\bmcs\bdot{\bsigma}-\bdot{\hbphi})\w\bsigma
-\bmcs\bsigma\w\bdot{\bsigma}\\
&\quad+\cbphi\w\cbphi+[\cbphi,\bY]\w\alpha+[\cbphi,\bmcs]\w\bsigma 
+[\bY,\bmcs]\alpha\w\bsigma-\bdot{\hbphi}\w\bsigma \\
&= \ed\cbphi+\cbphi\w\cbphi.
\end{align*}
\end{proof}

Recall from \eqref{eq:bsigma+normal} the normalization
$$ 4 e^{2\bsigma^+}\bbb\bbc=1.$$
Let
\be\label{eq:cbbp}
\ed\bbp:=
\sqrt{\gamma}\left(\frac{\bbb\cbphi^1_2+\bbc\cbphi^2_1}{\bbb\bbc}\right)
\ee
be the affine extension of the non-local variable $\bbp$ defined in \eqref{eq:bbp}
(which is defined by the same formula).
Define the corresponding  extension of $\bV_{\pm}$ by the same formula \eqref{eq:Vpm}.
Then, we have the following extension of Thm.\ref{thm:VV}.
\begin{thm}\label{thm:cVV}
Let $\bV_{\pm}$ be  defined by \eqref{eq:cbbp}, \eqref{eq:Vpm}. 
They satisfy the Killing field equation for $\cbphi$,
$$\ed\bV_{\pm}+[\cbphi,\bV_{\pm}]=0.$$
The set of three Killing fields $\{\, \bY, \bV_{\pm}\}$ generates the space of 
$\bmg_{\geq 0}$-valued Killing fields for  $\cbphi.$
\end{thm}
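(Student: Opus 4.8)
The plan is to observe that the proof of Theorem~\ref{thm:VV} goes through essentially unchanged, since it used only a short list of structural properties of the pair $(\cbphi,\bY)$, all of which persist for the affine extension: that $\cbphi$ is $\bmg_{\geq 0}$-valued; that $\cbphi$ satisfies the Maurer--Cartan equation $\ed\cbphi+\cbphi\w\cbphi=0$; that $\ed\bY+[\cbphi,\bY]=0$; and that the determinant constraint $\det(\bY)=\bba^2-4\bbb\bbc=-4\gamma\lambda^2$ holds. The first three are precisely Lemma~\ref{lem:cbphi}, applied to the extended $\cbphi$ of \eqref{eq:hbphi}, and the last was verified for the extended hierarchy in \S\ref{sec:addaffine}, see \eqref{eq:detY}. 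Consequently the eigen-matrix ansatz \eqref{eq:VVpm} for $\ad_{\bY}$ still makes sense, and inserting it into $\ed\bV_\pm+[\cbphi,\bV_\pm]=0$ reduces, after collecting terms according to $\lambda$-degree, to the same linear system \eqref{eq:EQuv}, namely $\ed(u,v)=(u,v)\Omega$ with $\Omega$ as in \eqref{eq:Omega} but with $\cbphi^i_j$ now the components of the extended $\cbphi$.

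First I would confirm that the tools used to integrate \eqref{eq:EQuv} survive. Decomposing $\Omega=\bp 1&\cdot\\ \cdot&1\ep\btheta^+ + \bp \cdot&1\\ \lambda^{-2}&\cdot\ep\btheta^-$ as in \eqref{eq:bthetapm}, the closedness relations $\ed\btheta^\pm=0$ follow from \eqref{eq:cbphiY} together with the compatibility of $\cbphi$ (Lemma~\ref{lem:cbphi}), exactly as before; the same input gives $\ed\mu=0$, so $\mu=4\bbb\bbc(v^2-u^2\lambda^2)$ is a constant in $\C[[\lambda^2]]$. Lemma~\ref{lem:sigmap}, $\btheta^+=-\tfrac12\ed\log(\bbb\bbc)$, again follows from \eqref{eq:cbphiY}, so the normalization $4e^{2\bsigma^+}\bbb\bbc=1$ of \eqref{eq:bsigma+normal} may be retained, and $\bbp$ is now the extended non-local variable defined by \eqref{eq:cbbp}. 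Since $\Omega\w\Omega=0$ and $[\Omega,\int\Omega]=0$, the function $g=\exp(\int\Omega)$ solves $\ed g=g\Omega$ and has the closed form \eqref{eq:gformula} with $\bsigma^-=-\lambda^2\bbp$; substituting the normalized rows of $g$ for $(u,v)$ in \eqref{eq:VVpm} produces exactly the formulas \eqref{eq:Vpm}, which therefore solve $\ed\bV_\pm+[\cbphi,\bV_\pm]=0$. That $\bV_+$ is $\bmg_{\geq 0}$-valued and $\bY,\bV_-$ are $\bmg_{\geq 1}$-valued is immediate from \eqref{eq:Vpm}.

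For the generation statement I would repeat the argument of Theorem~\ref{thm:VV}: using the eigenspace decomposition of $\ad_{\bY}$ (legitimate because $\det(\bY)=-4\gamma\lambda^2$), an arbitrary $\bmg_{\geq 0}$-valued Killing field for $\cbphi$ splits into a multiple of $\bY$ and two components governed by \eqref{eq:EQuv}; since the solution space of \eqref{eq:EQuv} is two-dimensional over $\C[[\lambda^2]]$ and is spanned by the rows of $g$, the triple $\{\bY,\bV_\pm\}$ generates everything.

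The only step that requires genuine care — the main obstacle — is the reduction of $\ed\bV_\pm+[\cbphi,\bV_\pm]=0$ to \eqref{eq:EQuv}: one must check that the new $\sigmab_\ell,\sigma_k$-directions hidden inside $\cbphi=\hbphi-\bY\alpha-\bmcs\bsigma$ do not contribute extra terms. This is exactly where one uses that $\cbphi$ is genuinely $\bmg_{\geq 0}$-valued; granting that, the $\lambda$-degree bookkeeping is word-for-word the same as in \S\ref{sec:addKilling}, and no new phenomena appear.
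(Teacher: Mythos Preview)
Your proposal is correct and follows essentially the same approach as the paper. The paper gives no explicit proof of Theorem~\ref{thm:cVV}, presenting it simply as ``the following extension of Thm.~\ref{thm:VV}'' after noting that the extended $\cbphi$ and $\bbp$ are defined by the same formulas; your write-up fills in precisely the verification that all ingredients of the original argument (Lemma~\ref{lem:cbphi}, the determinant constraint~\eqref{eq:detY}, and hence the ansatz~\eqref{eq:VVpm} and the reduction to~\eqref{eq:EQuv}) persist in the affine setting.
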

Similarly as for $\{\bY, \bP_{\pm}\}$, 
we continue to assume the algebraic  relations
\eqref{eq:YVV}, \eqref{eq:YVYV'}, \eqref{eq:VVdet}.

\two
Let $\cbmcs$ be the corresponding extension of the dressed spectral Killing field 
defined by the same formula \eqref{eq:Rcformula}. Then,
\begin{cor}\label{cor:dcS+}
The affine extension  $\cbmcs$ satisfies the spectral Killing field equation for $\cbphi$,
\be\label{eq:dcS+}
\ed\cbmcs+[\cbphi,\cbmcs]=\bdot{\cbphi}.
\ee
\end{cor}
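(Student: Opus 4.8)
The plan is to derive Eq.~\eqref{eq:dcS+} from the affine spectral Killing field equation \eqref{eq:dS+} for $\bmcs$ (with $\hbphi$) by exactly the same substitution that produced Lem.~\ref{lem:cbphi}~b) from Eq.~\eqref{eq:dY+}. The point is that $\cbmcs$ is defined by the \emph{same} algebraic formula \eqref{eq:Rcformula} in terms of $\{\bY,\bV_{\pm}\}$ that $\bmcs$ is given by \eqref{eq:Rformula} in terms of $\{\bY,\bP_{\pm}\}$, and that under the linear transformation \eqref{eq:PPpm} relating $\bP_{\pm}$ to $\bV_{\pm}$, one has the relation \eqref{eq:RVformula}, $\bmcs=(\bdot{\ttt}+\ttt)\bY+\cbmcs$. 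So the cleanest route is to start from \eqref{eq:dS+}, substitute $\hbphi=\cbphi+\bY\alpha+\bmcs\bsigma$ from \eqref{eq:hbphi}, and also substitute $\bmcs=(\bdot{\ttt}+\ttt)\bY+\cbmcs$, then collect terms.

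\textbf{Key steps, in order.} First I would write out \eqref{eq:dS+} with $\hbphi$ replaced via \eqref{eq:hbphi}:
\begin{align*}
\ed\bmcs+[\cbphi,\bmcs]+[\bY\alpha,\bmcs]+[\bmcs\bsigma,\bmcs]
+\bmcs\bdot{\bsigma}+\bdot{\bmcs}\bsigma=\bdot{\hbphi}.
\end{align*}
Since $[\bmcs\bsigma,\bmcs]=[\bmcs,\bmcs]\bsigma=0$, the $\bsigma$-homogeneous terms on the left collapse to $\bmcs\bdot{\bsigma}+\bdot{\bmcs}\bsigma$, while on the right $\bdot{\hbphi}=\bdot{\cbphi}+\bdot{\bY}\alpha+\bY\bdot{\alpha}+\bdot{\bmcs}\bsigma+\bmcs\bdot{\bsigma}$ (using $\ed\bsigma$-free differentiation, i.e.\ applying $\mcd$ to \eqref{eq:hbphi}). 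The $\bsigma$-terms cancel between the two sides, leaving
\begin{align*}
\ed\bmcs+[\cbphi,\bmcs]+[\bY,\bmcs]\alpha=\bdot{\cbphi}+(\bdot{\bY}\alpha+\bY\bdot{\alpha}).
\end{align*}
Next I would substitute $\bmcs=(\bdot{\ttt}+\ttt)\bY+\cbmcs$ and use the spectral identity $[\bY,\cbmcs]-\bdot{\bY}=-\bY$ from \eqref{eq:ScYVcommute} together with $[\bY,\bY]=0$, the Killing field equation $\ed\bY+[\cbphi,\bY]=0$ from \eqref{eq:cbphiY}, and $\ed\ttt=\alpha$, $\ed\bdot{\ttt}=\bdot{\alpha}$. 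The coefficient bookkeeping is exactly the reverse of the computation carried out in the paragraphs just before Thm.~\ref{thm:spectralKilling}: there one passed from $\tn{D}\cbmcs=\bdot{\cbphi}+(\bdot{\bY}-\bY)\alpha$ to the formula for $\bmcs$, and here I would run that identity forward to conclude $\ed\cbmcs+[\cbphi,\cbmcs]=\bdot{\cbphi}$.

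\textbf{Main obstacle.} The only real subtlety is that the affine objects $\cbphi$, $\bmcs$, $\cbmcs$, $\bY$ now carry $\sigmab_\ell,\sigma_k$-dependence, so I must be careful that the Euler operator $\mcd$ and the exterior derivative $\ed$ interact with the $\bsigma$-directions correctly — in particular that applying $\mcd$ to \eqref{eq:hbphi} genuinely yields $\bdot{\hbphi}=\bdot{\cbphi}+\bdot{(\bY\alpha)}+\bdot{(\bmcs\bsigma)}$ with $\mcd$ acting only through the $\lambda$-grading and not through $\sigma_k$ (which have definite $\lambda$-weight by construction, so $\bdot{\sigma_k}=-2k\,\sigma_k$ etc.), and that the already-established compatibility $\ed(\tn{Eq.}\eqref{eq:dS+})\equiv 0$ guarantees no inconsistency arises. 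Since Thm.~\ref{thm:cVV} gives that $\bV_\pm$ solve the Killing field equation for the affine $\cbphi$ and \eqref{eq:Rcformula} is purely algebraic, $\cbmcs$ automatically satisfies $\tn{D}_{\cbphi}\cbmcs=c_0\ad_{\bY}^2\bdot{\cbphi}+c_+\ad_{\bV_+}^2\bdot{\cbphi}+c_-\ad_{\bV_-}^2\bdot{\cbphi}=\bdot{\cbphi}$ by the same operator identity $c_0\ad_{\bY}^2+c_+\ad_{\bV_+}^2+c_-\ad_{\bV_-}^2=1_{\bmg}$ used in Prop.~\ref{prop:spectral}; so in fact the cleanest proof is simply to repeat that argument verbatim in the affine setting, and the corollary follows immediately. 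I would present it that way, noting only that \eqref{eq:dP+2}, \eqref{eq:YVYV'} and the normalization $4e^{2\bsigma^+}\bbb\bbc=1$ are what make the operator identity persist under the extension.
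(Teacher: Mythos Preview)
Your final chosen approach --- repeating the argument of Prop.~\ref{prop:spectral} verbatim in the affine setting --- is correct and is precisely what the paper intends: the corollary is stated immediately after Thm.~\ref{thm:cVV} with no further proof, because once $\bY,\bV_{\pm}$ are known to satisfy the Killing field equation for the affine $\cbphi$ (Lem.~\ref{lem:cbphi}a and Thm.~\ref{thm:cVV}), the purely algebraic definition \eqref{eq:Rcformula} of $\cbmcs$ together with the operator identity $c_0\ad_{\bY}^2+c_+\ad_{\bV_+}^2+c_-\ad_{\bV_-}^2=1_{\bmg}$ yields \eqref{eq:dcS+} immediately.

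One caution about the first route you sketched: deriving \eqref{eq:dcS+} from \eqref{eq:dS+} via the substitution $\bmcs=(\bdot{\ttt}+\ttt)\bY+\cbmcs$ is circular in the paper's logical order. The affine version of the relation \eqref{eq:RVformula} is Cor.~\ref{thm:spectralKilling+}, which comes \emph{after} Cor.~\ref{cor:dcS+} and whose proof explicitly invokes \eqref{eq:dcS+} (``Comparing this with \eqref{eq:dcS+}''). So if you were to present that argument, you would first have to establish the affine \eqref{eq:RVformula} independently --- which you can do, since it is a purely algebraic identity obtained by substituting \eqref{eq:cPPpm} into \eqref{eq:Rformula} --- but then you are essentially reproving Cor.~\ref{thm:spectralKilling+} out of order. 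Your instinct to abandon that route in favor of the direct operator-identity argument is the right one.
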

 
\sub{Additional Killing fields for $\pmb{\Phi}$ }
Summarizing the analysis so far,
the additional (formal) solutions to the Killing field equation for $\bPhi$
can be obtained from $\{  \bV_{\pm}\}$.

\two
Let
\be\label{eq:cPPpm}
\bp \bP_+ \\ \bP_-\ep:=
\bp  
\cosh(4\sqrt {\gamma}\lambda\ttt)  &  -  \sinh(4\sqrt {\gamma}\lambda\ttt)\lambda^{-1}     \\
 -\sinh(4\sqrt {\gamma}\lambda\ttt)\lambda& \cosh(4\sqrt {\gamma}\lambda\ttt)       \\
\ep
\bp \bV_+ \\ \bV_-\ep 
\ee
be the affine extension of the Killing fields $\bP_{\pm}$ defined 
by the same formula as in \eqref{eq:PPpm}.
They also satisfy the algebraic relations \eqref{eq:YPP}, \eqref{eq:YPYP'}, \eqref{eq:PPdet}.
 
\two
Recall the affine lifts $\hbP_{\pm}$, \eqref{eq:hPpm}.
Then we have the following extension of Thm.\ref{thm:PP}.
\begin{thm}\label{thm:hPP}
Let $\hbP_{\pm}$ be  defined  by \eqref{eq:hPpm}, \eqref{eq:cPPpm}, \eqref{eq:cbbp}, \eqref{eq:Vpm}. 
They satisfy the Killing field equation for the affine Maurer-Cartan form $\bPhi$,
$$\ed\hbP_{\pm}+[\bPhi,\hbP_{\pm}]=0.$$
Modulo $\hbmcs$,
the set of three Killing fields $\{\,\hat{\bY},  \hbP_{\pm}\}$ generates
the space of $\Vir_{N+1}\ltimes\sla(2,\C)[[\lambda^{-1},\lambda]]$-valued 
Killing fields for  $\bPhi$ 
whose loop algebra parts are of the form \eqref{eq:oftheform}.\ftmark\fttext{In this case, 
$\bQ_0, \bQ_{\pm}$ are $\sla(2,\C)[[\lambda^{-1},\lambda]]$-valued.}
\end{thm}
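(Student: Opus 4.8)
The statement splits into the affine Killing-field equation $\ed\hbP_{\pm}+[\bPhi,\hbP_{\pm}]=0$ and the generation claim, and I would treat them in that order.

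\textbf{The Killing-field equation.} Since $\hbP_+=(0,\bP_+)$ and $\hbP_-=(0,\lambda^{-1}\bP_-)$ have vanishing derivation component, unwinding the bracket of $\hbmg_{N+1}=\Vir_{N+1}\ltimes\bmg$ on them — exactly as was done for $\hbY$ in \S\ref{sec:eS} — shows that the two affine equations are equivalent to the loop-algebra equations \eqref{eq:dP+2}. To establish \eqref{eq:dP+2} I would substitute the definition \eqref{eq:cPPpm} of $\bP_{\pm}$ in terms of $\bV_{\pm}$, use $\ed\ttt=\alpha$ together with the constancy of $\gamma$ to differentiate the scalar transition matrix, and split $\hbphi=\cbphi+\bY\alpha+\bmcs\bsigma$ according to \eqref{eq:hbphi}. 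By Theorem~\ref{thm:cVV} ($\ed\bV_{\pm}+[\cbphi,\bV_{\pm}]=0$) the term $\ed\bP_{\pm}+[\cbphi,\bP_{\pm}]$ collapses to the derivative of the transition matrix applied to $(\bV_+,\bV_-)$, which rewrites as $-4\sqrt{\gamma}\,\bP_-\,\alpha$ and $-4\sqrt{\gamma}\lambda^2\bP_+\,\alpha$ respectively; these cancel against the contributions $[\bY,\bP_{\pm}]\,\alpha$ produced by the $\bY\alpha$ piece of \eqref{eq:hbphi}, using \eqref{eq:YPP}. What remains is the $\bmcs\bsigma$ piece, contributing $[\bmcs,\bP_{\pm}]\,\bsigma$, and by the spectral identities \eqref{eq:mus} this is $-\bdot{\bP}_+\,\bsigma$ and $(\bP_--\bdot{\bP}_-)\,\bsigma$ respectively — exactly what \eqref{eq:dP+2} requires. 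The affine extensions of $\bbp$ and $\bV_{\pm}$ used here, and the Maurer--Cartan equation for $\cbphi$, are already supplied by Theorem~\ref{thm:cVV}, Corollary~\ref{cor:dcS+} and Lemma~\ref{lem:cbphi}.

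\textbf{Generation.} Let $\hbP=(\delta,\bP)$ be a $\Vir_{N+1}\ltimes\sla(2,\C)[[\lambda^{-1},\lambda]]$-valued Killing field for $\bPhi$ with $\bP$ of the form \eqref{eq:oftheform}. I would decompose $\ed\hbP+[\bPhi,\hbP]=0$ into its derivation and loop-algebra components and, within the loop component, further by $\sigmab_{\ell},\sigma_k$-degree. The part free of $\sigmab_{\ell},\sigma_k$ reads $\ed'\bP+[\bphi,\bP]=0$, where $\ed'$ is the exterior derivative in the CMC-hierarchy directions only; hence $\bP$ is an ordinary Killing field for $\bphi$ of the form \eqref{eq:oftheform}, and by (the $\sla(2,\C)[[\lambda^{-1},\lambda]]$-valued version of) Theorem~\ref{thm:PP} we obtain $\bP=c_0\bY+c_+\bP_++c_-\bP_-$ with $c_0,c_{\pm}$ annihilated by $\ed'$, i.e.\ functions of $\lambda$ and of $\sigmab_{\ell},\sigma_k$ only. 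Substituting this back into the $\sigmab_{\ell},\sigma_k$-coefficients of the equation and into the derivation-part equation, and simplifying with the already-verified extended structure equations \eqref{eq:dY+}, \eqref{eq:dP+2}, all of the $[\mcs_k,\cdot]$ and $\bdot{(\cdot)}$ terms coming from $\bY,\bP_{\pm}$ cancel against those from differentiating the coefficients, and one is left with a first-order linear system for $(c_0,c_{\pm},\delta)$ in the $\sigma$-directions. The constant solutions $\delta=0$, $(c_0,c_+,c_-)\in\C^3$ reproduce $\hbY,\hbP_{\pm}$; the one remaining direction — the one with non-trivial derivation component — is carried by $\hbmcs$, whose derivation part $\tne^{u}$ obeys \eqref{eq:u}, which is precisely the equation governing $\delta$. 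Thus $\hbP\equiv c_0\hbY+c_+\hbP_++c_-\hbP_-$ modulo $\hbmcs$.

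\textbf{Where the difficulty lies.} I expect the main obstacle to be the bookkeeping in the first part: carrying the $\lambda$-powers introduced by the $\lambda^{-1}$-rescalings in $\hbY$ and $\hbP_-$, and verifying that the three competing correction terms — the derivative of the transition matrix, the $\bY\alpha$-term, and the $\bmcs\bsigma$-term — cancel exactly down to \eqref{eq:dP+2}, with no leftover $\bmg_{\leq-1}$-part or $\lambda$-degree-$0$ remainder. The conceptual consistency (that \eqref{eq:YPP}, \eqref{eq:mus} and the defining relation \eqref{eq:hbphi} are mutually compatible) is already underwritten by the analogous checks for $\hbY$ and $\hbmcs$ in \S\ref{sec:extension}--\S\ref{sec:CMC+}, so once that verification is done the generation argument is essentially routine, its only delicate point being the correct identification of the $\hbmcs$-ambiguity.
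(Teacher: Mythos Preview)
Your approach is correct and matches the paper's own proof: the paper rewrites \eqref{eq:dP+2} as $\ed\bP_{\pm}+[\hbphi-\bmcs\bsigma,\bP_{\pm}]=0$ via the spectral identities \eqref{eq:mus}, then invokes the identity $\hbphi-\bmcs\bsigma=\cbphi+\bY\alpha$ from \eqref{eq:hbphi} and substitutes \eqref{eq:cPPpm}, reducing everything to Theorem~\ref{thm:cVV} together with the bracket relations \eqref{eq:YPP} --- exactly the chain of reductions you carry out, only organized so that the $\bmcs\bsigma$-piece is absorbed first rather than last. The paper's proof does not spell out the generation claim at all (it is left implicit as an extension of Theorem~\ref{thm:PP}), so your outline for that part is in fact more detailed than what the paper provides.
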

\begin{proof}
The structure equations for $\bY$, \eqref{eq:dY+}, and $\bP_{\pm}$, \eqref{eq:dP+2}, 
are equivalent to
\begin{align}\label{eq:dP+2'}
\ed \bY +[\hbphi-\bmcs\bsigma,\bY]&=0,  \\
\ed \bP_{\pm} +[\hbphi-\bmcs\bsigma,\bP_{\pm}]&=0. \n
\end{align}
Substitute \eqref{eq:cPPpm}, and Eqs.\eqref{eq:dP+2'} reduce to the identity
\be\label{eq:hcidentity}
 \hbphi-\bmcs\bsigma=\cbphi+\bY\alpha.
\ee
\end{proof}
Note also that the structure equation for $\bmcs$, \eqref{eq:dS+}, can be written as
\be\label{eq:dS+'}
\ed\bmcs+[\hbphi-\bmcs\bsigma, \bmcs]=\bdot{\left(\hbphi-\bmcs\bsigma\right)}.
\ee
Comparing this with \eqref{eq:dcS+}, one gets  the following extension of  Thm.\ref{thm:spectralKilling}.
\begin{cor}\label{thm:spectralKilling+}
The affine extension of the normalized spectral Killing field $\bmcs$ is given
by the same formula \eqref{eq:RVformula}.
\end{cor}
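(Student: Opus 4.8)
The plan is to prove the identity exactly as Theorem~\ref{thm:spectralKilling} was proved, only with every ingredient replaced by its affine extension. By construction the affine $\bmcs$ is the combination \eqref{eq:Rformula} formed from the affine $\bY,\bP_{\pm}$, and the affine $\cbmcs$ is the combination \eqref{eq:Rcformula} formed from the affine $\bY,\bV_{\pm}$; moreover \eqref{eq:cPPpm} expresses the affine $\bP_{\pm}$ in terms of the affine $\bV_{\pm}$ through precisely the same invertible linear substitution (with entries $\cosh(4\sqrt{\gamma}\lambda\ttt)$, $\sinh(4\sqrt{\gamma}\lambda\ttt)\lambda^{\pm1}$) as in \eqref{eq:PPpm}. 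So I would simply substitute \eqref{eq:cPPpm} into \eqref{eq:Rformula} and expand; this is the identical manipulation carried out in the proof of Theorem~\ref{thm:spectralKilling}, and the only facts it calls on are the $\sla(2,\C)$-type relations \eqref{eq:YPP}, \eqref{eq:YPYP'} and \eqref{eq:YVV}, \eqref{eq:YVYV'}, together with the determinant normalizations \eqref{eq:PPdet}, \eqref{eq:VVdet} and $4e^{2\bsigma^+}\bbb\bbc=1$. All of these have been re-established for the affine extensions in \S\ref{sec:addaffine} (see Theorems~\ref{thm:cVV}, \ref{thm:hPP} and the surrounding remarks), so the manipulation goes through verbatim and delivers \eqref{eq:RVformula}.

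As a conceptual corroboration — which is what the sentence preceding the statement alludes to — I would also check directly that the right-hand side $(\bdot{\ttt}+\ttt)\bY+\cbmcs$ solves the extended spectral Killing field equation \eqref{eq:dS+'}. Using \eqref{eq:hcidentity} to rewrite $\hbphi-\bmcs\bsigma=\cbphi+\bY\balpha$, the operator in \eqref{eq:dS+'} becomes $\ed(\cdot)+[\cbphi+\bY\balpha,\,(\cdot)]$, and one evaluates it on $(\bdot{\ttt}+\ttt)\bY+\cbmcs$ using four inputs: $\ed\bY+[\cbphi,\bY]=0$ from Lemma~\ref{lem:cbphi}, $\ed\cbmcs+[\cbphi,\cbmcs]=\bdot{\cbphi}$ from Corollary~\ref{cor:dcS+}, the bracket relation $[\bY,\cbmcs]=\bdot{\bY}-\bY$ (the affine form of \eqref{eq:ScYVcommute}), and $\ed(\bdot{\ttt}+\ttt)=\bdot{\balpha}+\balpha$ (valid because $\ed$ commutes with the Euler operator $\mcd$ and $\ed\ttt=\balpha$). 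The cross terms cancel by the first relation, while the remaining terms reassemble as $\bdot{\cbphi}+\bdot{(\bY\balpha)}=\bdot{(\cbphi+\bY\balpha)}=\bdot{(\hbphi-\bmcs\bsigma)}$, which is precisely \eqref{eq:dS+'}; since a spectral Killing field is determined modulo Killing fields for $\bPhi$ and the normalization \eqref{eq:mus} pins the ambiguity down, this forces agreement with $\bmcs$.

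The part needing attention is organizational rather than analytic: one must make sure there is no circularity in the definitions, i.e. that $\bmcs$ is fixed up front by the algebraic formula \eqref{eq:Rformula} so that $\cbphi$ (defined from $\bmcs$ by \eqref{eq:hbphi}), then $\bbp$, $\bV_{\pm}$ and $\cbmcs$, are all well defined downstream; and that each algebraic identity imported from the non-affine argument — the $\sla(2,\C)$-type bracket relations for $\{\bY,\bP_{\pm}\}$ and $\{\bY,\bV_{\pm}\}$, and the identity \eqref{eq:hcidentity} — has indeed already been shown to survive the affine extension. Once that bookkeeping is in place, both routes above are entirely routine.
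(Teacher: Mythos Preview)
Your proposal is correct and matches the paper. The paper's one-line justification is precisely your second route: rewrite the extended structure equation \eqref{eq:dS+} for $\bmcs$ as \eqref{eq:dS+'} via the identity \eqref{eq:hcidentity}, and compare with \eqref{eq:dcS+} for $\cbmcs$; this reproduces verbatim the computation preceding Theorem~\ref{thm:spectralKilling}, so the same formula \eqref{eq:RVformula} falls out. Your first route (direct substitution of \eqref{eq:cPPpm} into \eqref{eq:Rformula}) is exactly the proof the paper gave for Theorem~\ref{thm:spectralKilling} itself, and it goes through here because the bracket relations \eqref{eq:mus} --- from which \eqref{eq:Rformula} is recovered algebraically --- have already been assumed to persist in \S\ref{sec:addaffine}.

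One small organizational point: you phrase the first route as ``the affine $\bmcs$ is \emph{by construction} the combination \eqref{eq:Rformula},'' but in the paper's logic the affine $\bmcs$ is characterized by the structure equation \eqref{eq:dS+}, and \eqref{eq:Rformula} is a consequence (via the assumed relations \eqref{eq:mus}), not the definition. This doesn't affect correctness --- your own discussion of circularity at the end flags exactly this issue --- but the cleaner presentation is to lead with the differential-equation comparison, as the paper does.
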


\one
This completes the affine extension of the entire dressing process.

\sub{Non-Abelian integrable extension for $\bbp$}
We wish to give an interpretation of the defining equation  for 
the non-local variable $\bbp$. 
For the general reference on integrable extension,
 \cite{Vinogradov1989}  
  
\two
Recall the formula for $\ed\bbp$, 
\be\label{eq:dpagain}
\ed\bbp =
\sqrt{\gamma}\left(\frac{\bbb\cbphi^1_2+\bbc\cbphi^2_1}{\bbb\bbc}\right).
\ee
Consider first the original (un-extended) CMC hierarchy case.
From the equation 
$$\hbphi-\cbphi\equiv \bY\alpha \mod \bsigma, $$
in this case the RHS of \eqref{eq:dpagain} consists of the terms in 
$\{\bba, \bbb, \bbc, \ed t_m \}$ and their complex conjugates only. 
It follows that the RHS represents an infinite sequence of local conservation laws for the CMC hierarchy, and $\bbp$ is the potential for the corresponding Abelian integrable extension.

For the extended CMC hierarchy case on the other hand,
the RHS contains the terms involving   $\bmcs$.
The formulas \eqref{eq:RVformula}, \eqref{eq:Rcformula}, \eqref{eq:Vpm} then imply that
the RHS involves $\{\bbp, \bdot{\bbp}\}$.
Thus, the integrable extension for $\bbp$ defined by \eqref{eq:dpagain} 
is a non-Abelian extension.

\section{Extended CMC hierarchy}\label{sec:exformulas}
For convenience,
we collect here the whole structure equations for the extended CMC hierarchy.

\one\noi
[\tb{Structure equations for $\bY, \bmcs, \hbphi$}]
\begin{align} 
\label{eq:dY+2}
&\ed \bY +[\hbphi,\bY]
+(\bdot{ \bY}-\bY)\bsigma=0. \\
\label{eq:dS+2}
&\ed\bmcs+[\hbphi,\bmcs]+(\tcr{+}\bmcs\bdot{\bsigma}  +\bdot{\bmcs}\bsigma)=\bdot{\hbphi}. \\
\label{eq:u2}
&\ed\bsigma+\bsigma\w\bdot{\bsigma}=0,  
\quad\ed u =\tcr{+}\bdot{\bsigma}\tcr{-}\bdot{u}\bsigma.\\
\label{eq:hbphi+}
&\ed\hbphi +\hbphi\w\hbphi=
\bdot{\hbphi}\w\bsigma.
\end{align}

\one\noi
[\tb{Structure equations for $\bP_{\pm}, \bV_{\pm}, \cbmcs$}]
\begin{align} 
\label{eq:hcidentity+}
&\hbphi-\bmcs\bsigma=\cbphi+\bY\alpha. \\
\label{eq:dP+2''}
&\ed \bP_{\pm} +[\hbphi-\bmcs\bsigma,\bP_{\pm}]=0.\\
\label{eq:dS+''}
&\ed\bmcs+[\hbphi-\bmcs\bsigma, \bmcs]=\bdot{\left(\hbphi-\bmcs\bsigma\right)}.\\
\label{eq:dV+2''}
&\ed \bV_{\pm} +[\cbphi,\bV_{\pm}]=0.\\
\label{eq:dcS+''}
&\ed\cbmcs+[\cbphi, \cbmcs]=\bdot{ \cbphi}.\\
\label{eq:cbphi''}
&\ed\cbphi +\cbphi\w\cbphi=0.
\end{align}

\two\noi
[\tb{Algebraic relations}]
\begin{align} 
\label{eq:YPP''}
&[\bY,\bP_+]= 4\sqrt{\gamma}  \bP_-, \quad
[\bY,\bP_-]= 4\sqrt{\gamma}\lambda^2 \bP_+, 
\quad[\bP_+,\bP_-]=\sqrt{\gamma} \bY,\\
&\bY \bP_+= -\bP_+\bY = 2\sqrt{\gamma}  \bP_-, \quad
\bY \bP_- =-\bP_-\bY= 2\sqrt{\gamma}\lambda^2 \bP_+,  
 \quad \bP_+ \bP_- =-  \bP_- \bP_+=\frac{1}{2}\sqrt{\gamma} \bY.\n\\
\label{eq:PPdet''}
&\det(\bP_+)= \gamma, \qquad 
\det(\bP_-)= -\gamma \lambda^2, \qquad \det(\bY)=\bba^2-4\bbb\bbc=-4\gamma\lambda^2 \\
&\tn{(the same relations for $\{\bV_{\pm}, \bY \}$)}.\n \\
\label{eq:Rformula''}
&\bmcs=c_0\ad_{\bY}(\bdot{\bY})+c_+\ad_{\bP_+}(\bdot{\bP}_+)+c_-\ad_{\bP_-}(\bdot{\bP}_-), 
\qquad \tn{(the similar formula for $\cbmcs$)}\\
&
c_0=+\frac{1}{32\gamma}\lambda^{-2},  \quad
c_{+}=-\frac{1}{8\gamma},
\quad
c_{-}=\frac{1}{8\gamma}\lambda^{-2}. \n\\
\label{eq:tdt''}
&\ttt=\frac{1}{2\im}\sum_{m=0}^{\infty} \lambda^{-(2m+2)}t_m,\qquad
\bdot{\ttt}+\ttt=\frac{\im}{2}\sum_{m=0}^{\infty}(2m+1)\lambda^{-(2m+2)}t_m. \\
\label{eq:mus''}
&\bmcs=\left(\bdot{\ttt}+\ttt \right)\bY+\check{\bmcs}.\\
\label{eq:PS''}
&
[\bP_+, {\bmcs}]=\bdot{\bP}_+,\quad [\bP_-, {\bmcs}]=\bdot{\bP}_- -\bP_-,
\quad[\bY, {\bmcs}]=\bdot{\bY}-\bY  \\
&\tn{(the same relations for $\{\bV_{\pm}, \cbmcs\}$)}.\n \\
\label{eq:key''}
&[ {\bmcs},\lambda^{-1}\bY]+ \bdot{(\lambda^{-1} \bY)}=0.
\end{align}

\sub{Formal sum}\label{sec:formalvs}
Consider the original un-truncated CMC hierarchy, i.e.,  we set $u=0, \bsigma=0$,
and let the truncation parameter $N\to \infty.$
Let $\mct_{\infty}=\{\del_{t_m}\}_{m=0}^{\infty}, \ol{\mct}_{\infty}=\{\del_{\tba_n}\}_{n=0}^{\infty}$ be the (commutative) Lie algebra of vector fields 
formally dual to the 1-forms
$\{\ed t_m\}_{m=0}^{\infty}, \{\ed \tba_n\}_{n=0}^{\infty}$ respectively.

From this point of view, the truncation process described in \S\ref{sec:truncated}
amounts to restricting the CMC hierarchy to a submanifold
which is tangent to the subalgebras of vector fields  
$\mct_{N}=\{\del_{t_m}\}_{m=0}^{N}, \ol{\mct}_{N}=\{\del_{\tba_n}\}_{n=0}^{N}$ respectively.
This essentially relies on the fact that the infinite dimensional Lie algebras 
$\mct_{\infty}, \ol{\mct}_{\infty}$
support such finite dimensional subalgebras.

On the other hand,  from the structure equation  \eqref{eq:ssbracket} or \eqref{eq:sigmastrt},
the Virasoro algebras $\Vir^{\pm}_{N+1}$ do  not appear to support any obvious family of 
finite dimensional subalgebras (of dimension $\geq 2$).
The finite truncation process in \S\ref{sec:truncated} is not as compatible 
for the non-commuting Virasoro algebras.

\two 
Under this circumstance,
we wish to check if the structure  equations presented above have any 
convergence related issues.
For this purpose, we  examine the dressed Maurer-Cartan form $\cbphi$,
for the variable $\bbp$ is defined by \eqref{eq:dpagain}
and it is used in the definition of $\cbmcs, \bmcs$.
 
By definition \eqref{eq:hcidentity+},
$$\cbphi\equiv \hbphi-\bmcs\bsigma \mod\; \ed t_m, \ed \tba_n,\;\forall\; m,n.$$
Substitute \eqref{eq:Phinotation}, and we get
\begin{align}
\cbphi&\equiv \sum_{\ell=0}^{\infty}-\ol{\mcs}^t_{\ell}\sigmab_{\ell}
 +\sum_{k=0}^{\infty}\mcs_k\sigma_k 
-\bmcs(\sum_{\ell=0}^{\infty} -\lambda^{2\ell}\sigmab_{\ell}  
+\sum_{k=0}^{\infty} \lambda^{-2k}\sigma_k) \mod\; \ed t_m, \ed \tba_n,\;\forall\; m,n,\n\\
&=\sum_{\ell=0}^{\infty}(-\ol{\mcs}^t_{\ell}+\lambda^{2\ell}\bmcs )\sigmab_{\ell}
+ \sum_{k=0}^{\infty}-\mcs_{(k+1)}\sigma_k.\n
\end{align}

Consider the $\sigma_k$-term.
In this case,  $\mcs_{(k+1)}$ is $\bmg_{\geq 0}$-valued. Hence
for each $\lambda$-degree $\geq 0$, 
the expression $\sum_{k=0}^{\infty}-\mcs_{(k+1)}\sigma_k$ contains 
an infinite sequence of $\sigma_k$-terms of the given degree.

Consider next the $\ol{\sigma}_{\ell}$-term.
In this case,  $\ol{\mcs}^t_{\ell}$ is $\bmg_{\geq 1}$-valued. Hence
for each $\lambda$-degree $\geq 1$, 
the expression $\sum_{\ell=0}^{\infty}(-\ol{\mcs}^t_{\ell}+\lambda^{2\ell}\bmcs )\sigmab_{\ell}$ 
also contains an infinite sequence of $\sigmab_{\ell}$-terms of the given degree.

\two
From this, we conclude that 
the coefficients of the $\C[[\lambda^{-2},\lambda^2]]$-valued 
1-form $\ed\bbp$ are defined generally  as a formal sum.
This implies that \emph{the affine extension of the truncated CMC hierarchy
presented above should generally be considered as a formal system of equations.}

\section{Proof of compatibility}\label{sec:proof}
From \eqref{eq:hbphi+}, set
\be\label{eq:LRHS}
\tn{LHS}:=\ed\hbphi +\hbphi\w\hbphi=
\bdot{\hbphi}\w\bsigma =:\tn{RHS}.
\ee
The claim is that  (for the terms of $\lambda$-degree $\ne 0$),
$$\tn{LHS}\equiv \tn{RHS} \mod \eqref{eq:dY+2}, \eqref{eq:dS+2}, \eqref{eq:u2}.
$$
We check only the  
$\ed t_m\w\sigma_k, \ed t_m\w\sigmab_{\ell},\sigma_j\w\sigma_k$-terms.
Compatibility of the remaining $ \ed \tba_n\w\sigma_k, \ol{\sigma}_{\ell}\w\ol{\sigma}_s$-terms
follow  by taking the formal conjugate transpose. 

The terms of $\lambda$-degree 0 in \eqref{eq:LRHS} determine  the formula for $\ed\rho$,
the exterior derivative of the connection form $\rho$.
This is treated in \S\ref{sec:drho}.

\sub{$\ed t_m\w\sigma_k$-terms}
These terms are checked in \S\ref{sec:tmskY}. Note that this part of the compatibility
relies on the condition
\be\label{eq:kconstraint}
 k \geq 0.\ee
\sub{$\ed t_m\w\sigmab_{\ell}$-terms}
The terms of $\lambda$-degree 0 contribute only to $\ed \rho$, which will be checked later.
We only consider the terms  of $\lambda$-degree $\ne 0.$
We wish to show that this part of the compatibility relies on the condition
\be\label{eq:lconstraint}\ell \geq N+1.
\ee

\two
The RHS gives
$$\tn{RHS}= -\lambda^{2\ell}\bdot{U}_m.$$
The LHS gives 
$$-\del_{t_m}\ol{S}_{\ell}^t-\delsb{\ell} U_m+[\ol{S}_{\ell}^t, U_m].$$

Note the identity
$$\ol{(\lambda^{2\ell}\bdot{U}_m)_{\geq 1}}^t=-(\lambda^{-2\ell} \bdot{\ol{U}^t}_m)_{\leq -1},$$
i.e., the upper-dot and the formal complex conjugation operations anti-commute.
Applying this to the conjugate transpose of \eqref{eq:delSk}, one gets
$$-\del_{t_m}\ol{S}_{\ell}^t=[U_m, \ol{S}_{\ell}^t]_{\geq 1}-(\lambda^{2\ell}\bdot{U}_m)_{\geq 1}.$$
From \eqref{eq:delUm}, we also have
$$-\delsb{\ell} U_m=-[\ol{S}_{\ell}^t,U_m]_{\leq -1}-(\lambda^{2\ell}\bdot{U_m})_{\leq -1}
-(2m+1)(\lambda^{2\ell}U_m)_{\leq -1}.
$$
Collect the terms  of $\lambda$-degree $\ne 0.$
After cancellation, the desired compatibility reduces to,
\be\label{eq:truncationcontr}
(\lambda^{2\ell} U_m)_{\leq -1}=0.
\ee
Since the CMC hierarchy is $t_N, \tba_N$-truncated,
the $\lambda$-degree of $U_m$ is bounded below by $-(2N+1)$.
This agrees with the constraint \eqref{eq:lconstraint}.

\sub{$\sigma_j\w\sigma_k$-terms}
We first record a relevant identity for the $\sigma_j$-derivative of $\mcs_k$.
\begin{lem}
For all $j, k \geq 0$,
\be\label{eq:Smixed}
\dels{j} \mcs_k-\dels{k} \mcs_j + [\mcs_j,\mcs_k ]
+(2k-2j) \mcs_{j+k}=\lambda^{-2k}\bdot{\mcs}_j-\lambda^{-2j}\bdot{\mcs}_k.
\ee
\end{lem}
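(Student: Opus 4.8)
The plan is to derive a closed formula for $\dels{j}\mcs_k$ by expanding the extended structure equation \eqref{eq:dS+2} (equivalently \eqref{eq:dS+}) for $\bmcs$ in the $\sigma$-coframe, and then to antisymmetrize in $j,k$. Since \eqref{eq:Smixed} only involves the $\sigma_k$-directions, I work in the $(-)$-type, so that by \eqref{eq:Phinotation} one has $\hbphi=\bphi+\sum_{k}\mcs_k\sigma_k$ and $\bsigma=\sum_k\lambda^{-2k}\sigma_k$.

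First I would extract the $\sigma_j$-component of \eqref{eq:dS+2}. The only $\sigma_j$-contributions to $\hbphi$, $\bsigma$, $\bdot{\bsigma}$, $\bdot{\hbphi}$ are $\mcs_j$, $\lambda^{-2j}$, $-2j\lambda^{-2j}$, $\bdot{\mcs}_j$ respectively, so this gives
\[
\dels{j}\bmcs=-[\mcs_j,\bmcs]+2j\lambda^{-2j}\bmcs-\lambda^{-2j}\bdot{\bmcs}+\bdot{\mcs}_j .
\]
Next I multiply by $\lambda^{-2k}$ and project onto $\bmg_{\leq -1}$, using $\lambda^{-2i}\bmcs=\mcs_i+\mcs_{(i+1)}\in\bmg_{\leq -1}+^{vs}\bmg_{\geq 0}$ together with the fact that the Euler operator $\mcd$ preserves $\lambda$-degree and hence commutes with the projection $(\cdot)_{\leq -1}$; this produces the auxiliary identity $(\lambda^{-2i}\bdot{\bmcs})_{\leq -1}=\bdot{\mcs}_i+2i\,\mcs_i$. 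The bracket $[\mcs_j,\mcs_k]$ has $\lambda$-degree $\leq -2$ and so survives the projection intact, and — this is exactly where the hypothesis $j,k\geq 0$ enters, in parallel with the $\ed t_m\w\sigma_k$-terms (condition \eqref{eq:kconstraint}) — so does the term $\lambda^{-2k}\bdot{\mcs}_j$, since $\bdot{\mcs}_j\in\bmg_{\leq -1}$. Collecting terms gives the analogue of \eqref{eq:delSk} with $U_m$ replaced by $\mcs_j$,
\[
\dels{j}\mcs_k=-[\mcs_j,\mcs_k]-[\mcs_j,\mcs_{(k+1)}]_{\leq -1}-2k\,\mcs_{j+k}-\bdot{\mcs}_{j+k}+\lambda^{-2k}\bdot{\mcs}_j .
\]

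Then I antisymmetrize in $j,k$. The $\bdot{\mcs}_{j+k}$-terms cancel, the $\mcs_{j+k}$-terms combine to $(2j-2k)\mcs_{j+k}$, and $\dels{j}\mcs_k-\dels{k}\mcs_j$ equals the right-hand side of \eqref{eq:Smixed} up to the discrepancy $-[\mcs_j,\mcs_k]-[\mcs_j,\mcs_{(k+1)}]_{\leq -1}+[\mcs_k,\mcs_{(j+1)}]_{\leq -1}$, which I must show vanishes. This follows from the trivial flatness $[\lambda^{-2j}\bmcs,\lambda^{-2k}\bmcs]=\lambda^{-2j-2k}[\bmcs,\bmcs]=0$: expanding via $\lambda^{-2i}\bmcs=\mcs_i+\mcs_{(i+1)}$ and projecting onto $\bmg_{\leq -1}$, the pure $\bmg_{\leq -1}$-bracket $[\mcs_j,\mcs_k]$ survives and the pure $\bmg_{\geq 0}$-bracket $[\mcs_{(j+1)},\mcs_{(k+1)}]$ drops out, leaving $[\mcs_j,\mcs_k]+[\mcs_j,\mcs_{(k+1)}]_{\leq -1}+[\mcs_{(j+1)},\mcs_k]_{\leq -1}=0$, i.e. exactly the vanishing of the discrepancy. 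Substituting back gives \eqref{eq:Smixed}.

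I expect the only genuine work to be the bookkeeping of the $(\cdot)_{\leq -1}$-truncations and their interaction with $\mcd$ and with multiplication by powers of $\lambda$ — entirely parallel to the identities in \S\ref{sec:identities} — with no conceptual obstacle. The one point demanding care is to verify at each step that the object being truncated already lies in $\bmg_{\leq -1}$ (namely $\dels{j}\mcs_k$, $[\mcs_j,\mcs_k]$, and $\lambda^{-2k}\bdot{\mcs}_j$) before applying the projection, since that is precisely what makes the hypothesis $j,k\geq 0$ necessary.
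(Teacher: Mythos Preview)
Your proposal is correct and follows essentially the same route as the paper's own proof: both extract the $\sigma_j$-component of \eqref{eq:dS+2}, pass to the $\bmg_{\leq -1}$-part to obtain the same formula for $\dels{j}\mcs_k$, antisymmetrize in $j,k$, and then kill the residual bracket terms via the trivial identity $[\mcs_j+\mcs_{(j+1)},\mcs_k+\mcs_{(k+1)}]=0$ projected to $\bmg_{\leq -1}$. The only cosmetic difference is that the paper substitutes the decompositions $\bmcs=\lambda^{2k}(\mcs_k+\mcs_{(k+1)})$ and $\bmcs=\lambda^{2j+2k}(\mcs_{j+k}+\mcs_{(j+k+1)})$ directly into \eqref{eq:dS+2} before collecting $\sigma_j$-terms, whereas you first isolate the $\sigma_j$-coefficient and then multiply by $\lambda^{-2k}$; the resulting intermediate formula and the endgame are identical.
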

\begin{proof}
From \eqref{eq:dS+2}, substitute $\bmcs=\lambda^{2k}(\mcs_k+\mcs_{(k+1)})$
for the first two $\bmcs$'s, and substitute  $\bmcs=\lambda^{2j+2k}(\mcs_{j+k}+\mcs_{(j+k+1)})$
for the next two $\bmcs$'s.
Collect the $\sigma_j$-terms, and one gets
$$\dels{j}(\mcs_k+\mcs_{(k+1)})+[\mcs_j,\mcs_k+\mcs_{(k+1)}]
+(\bdot{\mcs}_{j+k}+\bdot{\mcs}_{(j+k+1)})
+ 2k  (\mcs_{j+k}+\mcs_{(j+k+1)})=\lambda^{-2k}\bdot{\mcs}_j.
$$
Collect the $\bmg_{\leq -1}$-terms from this, and one gets
$$\dels{j} \mcs_k +[\mcs_j,\mcs_k ]+[\mcs_j, \mcs_{(k+1)}]_{\leq -1}
+\bdot{\mcs}_{j+k}+ 2k \mcs_{j+k}=\lambda^{-2k}\bdot{\mcs}_j.
$$
Interchange $j, k$ and take the difference, and one finds
\be\label{eq:Smixed0}
\dels{j} \mcs_k-\dels{k} \mcs_j +2[\mcs_j,\mcs_k ]+([\mcs_j, \mcs_{(k+1)}]
+[\mcs_{(j+1)},\mcs_k])_{\leq -1}
+(2k-2j) \mcs_{j+k}=\lambda^{-2k}\bdot{\mcs}_j-\lambda^{-2j}\bdot{\mcs}_k.
\ee
From the trivial identity $[\mcs_j+\mcs_{(j+1)},\mcs_k+\mcs_{(k+1)}]=0$, note
$$[\mcs_j,\mcs_k ]+([\mcs_j, \mcs_{(k+1)}]
+[\mcs_{(j+1)},\mcs_k])_{\leq -1}=0.$$
Hence \eqref{eq:Smixed0} reduces to \eqref{eq:Smixed}.
\end{proof}

Now, consider the $\sigma_j\w\sigma_k$-terms in \eqref{eq:LRHS}.
The RHS gives
$$\tn{RHS}=\bdot{\mcs}_j\lambda^{-2k}-\bdot{\mcs}_k\lambda^{-2j}.
$$
The LHS gives
$$
\tn{LHS}=\dels{j} \mcs_k-\dels{k} \mcs_j + [\mcs_j,\mcs_k ]
+2(\ed\sigma_i)_{\sigma_j\w\sigma_k}\mcs_i,
$$
where $(\ed\sigma_i)_{\sigma_j\w\sigma_k}$ means the coefficient of $\sigma_j\w\sigma_k$
in $\ed\sigma_i$.
The claim LHS=RHS follows from \eqref{eq:Smixed} and \eqref{eq:sigmastrt}.

\sub{$\ed\rho$}\label{sec:drho}
The formula for the 2-form $\ed\rho$ is determined by the $\lambda$-degree 0 terms in
\eqref{eq:hbphi+}.
For the compatibility equation $\ed^2\rho=0$,
it suffices to check that
$\ed^2=0$ is an identity for \eqref{eq:hbphi+},
assuming the compatibility of the rest of the equations.
This follows from  \eqref{eq:hbphi+} itself  and \eqref{eq:sigmastrt2}.
We omit the  details.
 
\sub{$\ed h_2$, and $\ed\xi$}
The analysis thus far shows the compatibility 
of the $\tba_N, t_N$-truncated $(-\ol{\tn{AKS}}^t,\tn{AKS})$-hierarchy  
extended by the $\Vir^{\pm}_{N+1}$-symmetry,
under the constraints that 
$$\ed \tba_0=- \frac{1}{2}\hb_2^{\frac{1}{2}}\xib,\quad \ed t_0=-\frac{1}{2}h_2^{\frac{1}{2}}\xi.$$
For the  formulation of the extended CMC hierarchy, 
we wish to separate this to the structure equations for $\ed h_2, \ed\xi$ respectively.

Since the first coefficients $c^2, b^2$ of $\bY$ are multiples of $h_2^{\frac{1}{2}}, h_2^{-\frac{1}{2}}$
respectively, the formula for $\ed h_2$ is included in \eqref{eq:dY+2} (and hence compatible).
Then, $\ed\xi$ is determined from the equations,
\begin{align}
\xi&=-2h_2^{-\frac{1}{2}} \ed t_0, \n\\
\ed\xi&=h_2^{-\frac{1}{2}-1}\ed h_2\w\ed t_0 \n\\
&=-\frac{1}{2}h_2^{-1}\ed h_2\w\xi. \n
\end{align}
The compatibility equation $\ed^2\xi=0$   follows from  the compatibility of $\ed h_2$.

\two
We proceed to extract the formula for $\ed h_2$, or $\ed c^2$, from \eqref{eq:Y+},
by collecting the terms of $\lambda$-degree 1.

It is clear from the equation for $\delsb{\ell}\bY$ in \eqref{eq:Y+} that
$$\delsb{\ell} c^2=0, \;\forall\; \ell,$$
for $\bdot{\bY}-\bY$ has $\lambda$-degree$\,\geq 2$.

From \eqref{eq:pmrelationS'}, we have
$$\dels{k}\bY=[\mcs_{(k+1)}, \bY].
$$
Collect the terms of $\lambda$-degree 1, and one finds
$$\dels{k}(\bY)_1 =[(\mcs_{(k+1)})_0, (\bY)_1].
$$
Here $(\bY)_1$ means the terms of $\lambda$-degree 1 in $\bY$, etc.

Recall from \eqref{eq:tbY},
$$(\bY)_1=\bp \cdot  &2c^2 \\ 2 b^2 & \cdot \ep.$$
Set 
\be\label{eq:tbS}
\bmcs=\bp -\im\tb{a}_{\bmcs}&2\tb{c}_{\bmcs}\\ 2\tb{b}_{\bmcs}&\im\tb{a}_{\bmcs} \ep,  
\ee
where
\be
\tb{a}_{\bmcs}=\sum \lambda^{2n}a^{2n+1}_{\bmcs},\qquad
\tb{b}_{\bmcs}=\sum \lambda^{2n+1}b^{2n+2}_{\bmcs},\qquad
\tb{c}_{\bmcs}=\sum \lambda^{2n+1}c^{2n+2}_{\bmcs}.\n
\ee
By definition $\lambda^{-2k}\bmcs=\mcs_k+\mcs_{(k+1)}$, and we have
$$(\mcs_{(k+1)})_0=\bp -\im a^{2k+1}_{\bmcs}&\cdot \\ \cdot &\im a^{2k+1}_{\bmcs} \ep.
$$
From this, we obtain
\be\label{eq:delsc2}
\dels{k} c^2=-2\im  a^{2k+1}_{\bmcs} c^2.
\ee
Substitute $c^2=\im h_2^{\frac{1}{2}}$, and one gets
\be\label{eq:delsh2}
\dels{k} h_2=-4\im  a^{2k+1}_{\bmcs} h_2.
\ee
\begin{prop}\label{prop:h2xi}
The extended structure equations for $\xi, h_2$ are,
\begin{align}\label{eq:h2xi}
\ed\xi-\im\rho\w\xi     &=\sum_{m=1}^{\infty}  a^{2m+3}\ed t_m\w\xi
+2\im \sum_{k=0}^{\infty}   a^{2k+1}_{\bmcs}\sigma_k\w\xi, \\
\ed h_2+2\im h_2\rho&=h_3\xi -2 \sum_{m=1}^{\infty}  h_2 a^{2m+3}\ed t_m
-4\im  \sum_{k=0}^{\infty}   h_2 a^{2k+1}_{\bmcs}\sigma_k, \n \\
&= -2 \sum_{m=0}^{\infty}  h_2 a^{2m+3}\ed t_m
-4\im  \sum_{k=0}^{\infty}   h_2 a^{2k+1}_{\bmcs}\sigma_k.\n
\end{align}
\end{prop}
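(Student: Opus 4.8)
The plan is to obtain both equations by collecting the $\lambda$-degree-$1$ terms of the extended structure equation \eqref{eq:Y+} for $\bY$. Since $(\bY)_1=\bp\cdot&2c^2\\2b^2&\cdot\ep$ with $c^2=\im h_2^{1/2}$, the $\dels{k}$- and $\delsb{\ell}$-derivatives of $h_2$ are encoded in the degree-$1$ component of the respective lines of \eqref{eq:Y+}, while the $\del_{t_m}$, $\del_{\tba_n}$ and $\rho$ contributions are exactly the classical ones and may be quoted verbatim from \eqref{Eqbphi}; the constraint $\xi=-2h_2^{-1/2}\ed t_0$ then transports the result to $\ed\xi$.

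First I would handle the new (Virasoro) directions. For $\delsb{\ell}\bY=[\ol{\mcs}^t_{\ell},\bY]+\lambda^{2\ell}(\bdot{\bY}-\bY)$, note that $[\ol{\mcs}^t_{\ell},\bY]$ has $\lambda$-degree $\geq2$ (both factors lie in $\bmg_{\geq1}$) and that $\bdot{\bY}-\bY$ annihilates the degree-$1$ part of $\bY$, so $\lambda^{2\ell}(\bdot{\bY}-\bY)$ also has $\lambda$-degree $\geq2$; hence $\delsb{\ell}c^2=0$ for every $\ell$ and the $\sigmab_\ell$-directions make no contribution to $\ed h_2$. For $\dels{k}\bY$, I would rewrite the third line of \eqref{eq:Y+} via \eqref{eq:pmrelationS'} as $\dels{k}\bY=[\mcs_{(k+1)},\bY]$ and read off the degree-$1$ part as $\dels{k}(\bY)_1=[(\mcs_{(k+1)})_0,(\bY)_1]$. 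Writing $\bmcs$ as in \eqref{eq:tbS}, the twisted parity forces the off-diagonal coefficients of $\bmcs$ to be odd in $\lambda$, so the degree-$0$ part of $\lambda^{-2k}\bmcs=\mcs_k+^{vs}\mcs_{(k+1)}$ is the diagonal matrix $(\mcs_{(k+1)})_0=\bp-\im a^{2k+1}_{\bmcs}&\cdot\\\cdot&\im a^{2k+1}_{\bmcs}\ep$. A one-line $2\times2$ bracket gives $\dels{k}c^2=-2\im a^{2k+1}_{\bmcs}c^2$, i.e.\ \eqref{eq:delsc2}, and substituting $c^2=\im h_2^{1/2}$ yields $\dels{k}h_2=-4\im a^{2k+1}_{\bmcs}h_2$, i.e.\ \eqref{eq:delsh2}.

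Assembling these with the classical terms produces $\ed h_2+2\im h_2\rho=h_3\xi-2\sum_{m\geq1}h_2a^{2m+3}\ed t_m-4\im\sum_{k\geq0}h_2a^{2k+1}_{\bmcs}\sigma_k$; to reach the $\sum_{m\geq0}$ form I would absorb $h_3\xi$ as the $m=0$ term using the Part \Rmnum{1} relation $h_3=h_2^{3/2}a^3$ (equivalently $h_3\xi=-2h_2a^3\ed t_0$, since $\ed t_0=-\frac{1}{2}h_2^{1/2}\xi$). For $\ed\xi$ I would differentiate $\xi=-2h_2^{-1/2}\ed t_0$ to get $\ed\xi=-\frac{1}{2}h_2^{-1}\,\ed h_2\w\xi$ (already recorded above), substitute the formula for $\ed h_2$, and note that the $\rho$-term and the $m=0$ (i.e.\ $\ed t_0$) term of $\ed h_2$ are multiples of $\xi$ and hence die in the wedge $\ed h_2\w\xi$; what remains is $\ed\xi-\im\rho\w\xi=\sum_{m\geq1}a^{2m+3}\ed t_m\w\xi+2\im\sum_{k\geq0}a^{2k+1}_{\bmcs}\sigma_k\w\xi$, as claimed. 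Compatibility $\ed^2\xi=0$ then follows at once from $\ed^2h_2=0$, which is part of Thm.\ref{thm:main}.

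I expect the main obstacle to be the careful identification of $(\mcs_{(k+1)})_0$: one has to combine the twisted-loop-algebra parity of $\bmcs$ with the vector-space splitting $\lambda^{-2k}\bmcs=\mcs_k+^{vs}\mcs_{(k+1)}$ to see that only the diagonal, even part survives at $\lambda$-degree $0$ and that it is governed by the coefficient $a^{2k+1}_{\bmcs}$ of $\tb{a}_{\bmcs}$ in \eqref{eq:tbS}. The only other point requiring attention is the degree bookkeeping for the $\delsb{\ell}$-flow --- verifying that \emph{both} $[\ol{\mcs}^t_{\ell},\bY]$ and $\lambda^{2\ell}(\bdot{\bY}-\bY)$ start in $\lambda$-degree $\geq2$ --- which is immediate once one observes that $\bdot{\bY}-\bY$ kills $(\bY)_1$.
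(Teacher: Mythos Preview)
Your proposal is correct and follows essentially the same route as the paper: extract the $\lambda$-degree~$1$ part of \eqref{eq:Y+}, use \eqref{eq:pmrelationS'} to rewrite $\dels{k}\bY=[\mcs_{(k+1)},\bY]$, identify $(\mcs_{(k+1)})_0$ via the twisted parity, and then transport the result to $\ed\xi$ through $\xi=-2h_2^{-1/2}\ed t_0$. One small slip: in your $\ed\xi$ derivation the $\rho$-term of $\ed h_2$ is $-2\im h_2\rho$, which is \emph{not} a multiple of $\xi$ and does not die in the wedge---rather, $-\tfrac{1}{2}h_2^{-1}(-2\im h_2\rho)\w\xi=\im\rho\w\xi$ is exactly what moves to the left-hand side; your final formula is nonetheless correct.
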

\begin{cor}\label{cor:vircon}
The (formal) deformation of the CMC system induced by the Virasoro symmetries
is conformal and preserves Hopf differential.
\end{cor}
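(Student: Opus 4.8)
The plan is to read the corollary straight off the extended structure equations \eqref{eq:h2xi} of Proposition \ref{prop:h2xi}, by contracting them with the Virasoro vector field $\del_{\sigma_k}$; the $\del_{\sigmab_\ell}$-case then follows by the conjugate-transpose symmetry used throughout \S\ref{sec:proof}. Recall that on a solution the induced conformal structure of the CMC surface is the one for which $\xi$ is of type $(1,0)$, equivalently the class of the Hermitian metric $\xi\,\ol{\xi}$, and that the Hopf differential is the weighted form $h_2\,\xi^{\otimes 2}$. So I must compute $\mcl_{\del_{\sigma_k}}\xi$, $\mcl_{\del_{\sigma_k}}\ol{\xi}$ and $\del_{\sigma_k}h_2$ and assemble $\mcl_{\del_{\sigma_k}}(\xi\,\ol{\xi})$ and $\mcl_{\del_{\sigma_k}}(h_2\,\xi^{\otimes 2})$.

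First I would record that $\del_{\sigma_k}$ annihilates $\xi,\ol{\xi}$ and $\ed t_m$, and that the connection form $\rho$ has no $\sigma_k$-component: indeed $\rho$ is, up to the factor $\tfrac{\im}{2}$, the $\lambda^{0}$-diagonal entry of $\hbphi$, while the $\sigma_k$-coefficient of $\hbphi$ is $\mcs_k\in\bmg_{\leq-1}$, which carries no $\lambda^{0}$ term. Contracting the first line of \eqref{eq:h2xi} with $\del_{\sigma_k}$ then yields $\mcl_{\del_{\sigma_k}}\xi = 2\im\,a^{2k+1}_{\bmcs}\,\xi$, a multiple of $\xi$; the conjugate of that line involves only $\ed\tba_m$- and $\sigmab_\ell$-terms on the right, so $\mcl_{\del_{\sigma_k}}\ol{\xi}=0$. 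Hence
\[
\mcl_{\del_{\sigma_k}}\big(\xi\,\ol{\xi}\big)=2\im\,a^{2k+1}_{\bmcs}\,\xi\,\ol{\xi},
\]
i.e. the Virasoro flow rescales the induced metric by the factor $2\im\,a^{2k+1}_{\bmcs}$ and therefore preserves the conformal structure; the same holds for $\del_{\sigmab_\ell}$ by conjugation.

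For the Hopf differential I would contract the $h_2$-line of \eqref{eq:h2xi} with $\del_{\sigma_k}$; again $\rho$ contributes nothing along $\sigma_k$ and the $\ed t_m$-terms drop, leaving $\del_{\sigma_k}h_2 = -4\im\,h_2\,a^{2k+1}_{\bmcs}$. Combining this with $\mcl_{\del_{\sigma_k}}\xi = 2\im\,a^{2k+1}_{\bmcs}\,\xi$ and the Leibniz rule,
\[
\mcl_{\del_{\sigma_k}}\big(h_2\,\xi^{\otimes 2}\big)
=\big(\del_{\sigma_k}h_2\big)\,\xi^{\otimes 2}+2\,h_2\,\xi\otimes\mcl_{\del_{\sigma_k}}\xi
=\big(-4\im\,a^{2k+1}_{\bmcs}+4\im\,a^{2k+1}_{\bmcs}\big)\,h_2\,\xi^{\otimes 2}=0 .
\]
Equivalently, since $h_2$ carries $\rho$-weight $-2$ and $\xi$ carries $\rho$-weight $+1$, the combination $h_2\,\xi^{\otimes2}$ is weight-zero, so the Lie derivatives may be replaced by the covariant derivatives appearing on the left-hand sides of \eqref{eq:h2xi}, and the cancellation is visible directly from the matching coefficients $-4\im\,a^{2k+1}_{\bmcs}$ and $2\im\,a^{2k+1}_{\bmcs}$. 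By conjugation the same computation handles $\del_{\sigmab_\ell}$, which gives the corollary.

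There is no genuine obstacle: once Proposition \ref{prop:h2xi} is in hand the statement is a one-line bookkeeping of coefficients. The only point needing a moment's care is the role of the connection form $\rho$ — one must either check (as above) that $\rho$ has no $\sigma_k$- or $\sigmab_\ell$-component, because the Virasoro coefficient $\mcs_k$ of $\hbphi$ lies in $\bmg_{\leq-1}$ and so has vanishing $\lambda^{0}$-part, or argue weight-invariantly that the $\rho$-twists in $\ed\xi-\im\rho\w\xi$ and $\ed h_2+2\im h_2\rho$ cancel in the invariant combinations $\xi\,\ol{\xi}$ and $h_2\,\xi^{\otimes2}$. All the substantive work has already been done in deriving \eqref{eq:h2xi} from \eqref{eq:Y+}.
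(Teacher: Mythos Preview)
Your proposal is correct and is exactly the argument the paper has in mind: the corollary is stated immediately after Proposition~\ref{prop:h2xi} with no separate proof, because the two claims are read off directly from \eqref{eq:h2xi} --- the $\sigma_k$-coefficient of $\ed\xi$ is a multiple of $\xi$ (conformality), and the coefficients $-4\im\,a^{2k+1}_{\bmcs}$ and $2\im\,a^{2k+1}_{\bmcs}$ combine to kill $\mcl_{\del_{\sigma_k}}(h_2\,\xi^{\otimes 2})$. Your added remark that $\rho$ carries no $\sigma_k$-component because $\mcs_k\in\bmg_{\leq -1}$ is a useful sanity check; note also that in the $\del_{\sigmab_\ell}$-direction the paper has already observed $\delsb{\ell}c^2=0$, so there the preservation of $h_2\,\xi^{\otimes2}$ is in fact trivial rather than a repeat of the same cancellation.
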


\section{Central extension}\label{sec:central}
For an application of the affine extension of the CMC hierarchy,
we examine the (missing) central parts of the extended Killing fields $\hbY, \hbmcs.$
The log of tau function is defined as the central component for $\hbmcs$. 
We find a closed formula for  tau function, \eqref{eq:unextau}.
\sub{Central extension for $\lambda^{-1}\tb{Y}$}
Consider the 1-form
\be\label{eq:varphi0bY}
\varphi^0_{\bY}:=\tn{Res}_{\lambda=0} \tr (\lambda^{-1}\bY\bdot{\hbphi}).
\ee
Here   $\tn{Res}_{\lambda=0}$
is the residue operator that takes the terms of $\lambda$-degree 0.
Since $\lambda^{-1}\bY$ takes values in  $\lambda \bmg$, which is orthogonal to $\bmg$,
it is clear that 
$$\varphi^0_{\bY}=0.$$
Thus the central extension for $\lambda^{-1}\bY$ is trivial.

\two
Consider instead the 1-form
\be\label{eq:varphi0bY}
\varphi_{\bY}:=\tr \left(\bY\bdot{(\hbphi-\bmcs\bsigma)}\right).
\ee
Since $\bY$ is also a Killing field for $\cbphi$, and from the relation \eqref{eq:hcidentity}, 
it follows that 
$$\ed \varphi_{\bY}=0.$$

For the original CMC hierarchy, the 1-form $$ \tr ( \bY\bdot{\bphi})$$ represents the infinite sequence
of  local, higher-order conservation laws. 
Thus $\varphi_{\bY}$ represents the affine extension of these conservation laws.

\sub{Tau function}
Consider the 1-form
\be\label{eq:varphibS}
\varphi_{\bmcs}:=\tcr{-} \tn{Res}_{\lambda=0} \tr (\tne^u\bmcs \bdot{\hbphi}).
\ee
We claim that,
\begin{lem}
The   1-form $\varphi_{\bmcs}$ is closed,
$$\ed \varphi_{\bmcs}=0.$$
\end{lem}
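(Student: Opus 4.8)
The plan is to differentiate $\varphi_{\bmcs}=-\tn{Res}_{\lambda=0}\tr(\tne^u\bmcs\,\bdot{\hbphi})$ and show that all terms cancel, using the extended structure equations \eqref{eq:dS+2}, \eqref{eq:u2}, \eqref{eq:hbphi+}, together with the ad-invariance of the trace form and the fact that $\tn{Res}_{\lambda=0}$ kills everything of nonzero $\lambda$-degree. First I would compute $\ed\bigl(\tne^u\bmcs\bigr)$ from \eqref{eq:dS+2} and \eqref{eq:u2}: we have $\ed(\tne^u)=\tne^u\ed u=\tne^u(\bdot{\bsigma}-\bdot u\,\bsigma)$, and $\ed\bmcs=-[\hbphi,\bmcs]-\bmcs\bdot{\bsigma}-\bdot{\bmcs}\bsigma+\bdot{\hbphi}$, so that
\begin{align*}
\ed(\tne^u\bmcs)&=\tne^u\bigl(\ed u\,\bmcs+\ed\bmcs\bigr)\\
&=\tne^u\Bigl(-[\hbphi,\bmcs]-\bdot{u}\,\bmcs\,\bsigma-\bdot{\bmcs}\,\bsigma+\bdot{\hbphi}\Bigr).
\end{align*}
Next I would differentiate $\bdot{\hbphi}$: applying $\mcd=\lambda\partial_\lambda$ to \eqref{eq:hbphi+} gives $\ed\bdot{\hbphi}+\bdot{\hbphi}\w\hbphi+\hbphi\w\bdot{\hbphi}=\bddot{\hbphi}\w\bsigma+\bdot{\hbphi}\w\bdot{\bsigma}$, i.e.\ $\ed\bdot{\hbphi}=-[\hbphi,\bdot{\hbphi}]_\w+\bddot{\hbphi}\w\bsigma+\bdot{\hbphi}\w\bdot{\bsigma}$ (reading the bracket as a wedge-commutator of matrix-valued $1$-forms).

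Then I would expand $\ed\varphi_{\bmcs}=-\tn{Res}_{\lambda=0}\bigl(\tr(\ed(\tne^u\bmcs)\w\bdot{\hbphi})-\tr(\tne^u\bmcs\,\ed\bdot{\hbphi})\bigr)$ and collect terms. The commutator terms $\tr([\hbphi,\tne^u\bmcs]\w\bdot{\hbphi})+\tr(\tne^u\bmcs\,[\hbphi,\bdot{\hbphi}])$ cancel by cyclicity of the trace (the standard $\tr([A,B]C)+\tr(B[A,C])=0$ identity, with the $1$-form signs working out). The remaining terms are the $\bsigma$-, $\bdot{\bsigma}$-, $\bddot{\hbphi}$-contributions; here the key point is that $\bmcs$ and $\bdot{\hbphi}$ carry $\bsigma$-type factors only through the $\sigma_k,\sigmab_\ell$ pieces of $\hbphi$, and the two $1$-form factors in each surviving wedge product are proportional (both are multiples of $\bsigma$, or one is $\bsigma$ and the other $\bdot{\bsigma}$ paired against the same index), so that either the wedge vanishes or the residue does. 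Concretely, a term like $\tn{Res}_{\lambda=0}\tr(\tne^u\bmcs\,\bddot{\hbphi})\w\bsigma$ pairs against $\tn{Res}_{\lambda=0}\tr(\bdot{(\tne^u\bmcs)}\bdot{\hbphi})$-type pieces and I would verify the numerical coefficients match, exactly as in the $\ed\varphi_{\bY}=0$ argument, using that $\mcd$ is a derivation and commutes with $\tn{Res}_{\lambda=0}$ up to the obvious shift.

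The step I expect to be the main obstacle is bookkeeping the $\bsigma$-contractions: because $\hbphi=\bphi+\sum\mcs_k\sigma_k$ (or the $\sigmab_\ell$ version), both $\bmcs$ and $\hbphi$ appear with $\sigma$-dependence, and one must check that the ``quadratic in $\sigma$'' pieces coming from $\ed(\tne^u\bmcs)\w\bdot{\hbphi}$, from $\ed\bdot{\hbphi}$, and from $\ed\bsigma$ via \eqref{eq:u2} all conspire. I would handle this by noting that $\varphi_{\bmcs}$ is, by construction, the loop-algebra pairing that detects the would-be central term of $\hbmcs$ in $\hbmg_{N+1}$; its closedness is then the $2$-cocycle/Maurer-Cartan consistency of the affine extension already established in Thm.\ref{thm:main}. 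So an alternative, cleaner route — which I would actually prefer to write out — is: reduce to $\cbphi$ via \eqref{eq:hcidentity}, write $\varphi_{\bmcs}\equiv-\tn{Res}_{\lambda=0}\tr(\tne^u\cbmcs\,\bdot{\cbphi})\ \mod\ \bsigma\text{-exact}$, and use that $\cbmcs$ satisfies the \emph{genuine} inhomogeneous Killing equation \eqref{eq:dcS+''} $\ed\cbmcs+[\cbphi,\cbmcs]=\bdot{\cbphi}$ together with $\ed\cbphi+\cbphi\w\cbphi=0$; then $\ed\tr(\cbmcs\,\bdot{\cbphi})=\tr(\ed\cbmcs\w\bdot{\cbphi})-\tr(\cbmcs\,\ed\bdot{\cbphi})$, and substituting $\ed\cbmcs=\bdot{\cbphi}-[\cbphi,\cbmcs]$ and $\ed\bdot{\cbphi}=-[\cbphi,\bdot{\cbphi}]_\w$ makes the commutator terms cancel by cyclicity while $\tr(\bdot{\cbphi}\w\bdot{\cbphi})=0$, giving $\ed\varphi_{\bmcs}=0$ directly.
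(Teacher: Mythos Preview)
Your first approach sets up the computation correctly: you compute $\ed(\tne^u\bmcs)$, you compute $\ed\bdot{\hbphi}$ by applying $\mcd$ to \eqref{eq:hbphi+}, and you observe that the commutator pieces cancel by cyclicity of the trace together with $\tr(\bdot{\hbphi}\w\bdot{\hbphi})=0$. What survives is exactly the four terms
\[
\tne^u\Big(\bdot u\,\tr(\bmcs\bdot{\hbphi})\w\bsigma
+\tr(\bdot{\bmcs}\,\bdot{\hbphi})\w\bsigma
+\tr(\bmcs\,\bddot{\hbphi})\w\bsigma
+\tr(\bmcs\,\bdot{\hbphi})\w\bdot{\bsigma}\Big).
\]
Here is the gap: your stated reason for why these vanish --- that ``the two $1$-form factors in each surviving wedge product are proportional'' --- is not correct. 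The $1$-form $\tr(\bmcs\,\bdot{\hbphi})$ contains $\ed t_m,\ed\tba_n$ pieces (from the $U_m,\Uba^t_n$ parts of $\hbphi$), so $\tr(\bmcs\,\bdot{\hbphi})\w\bsigma$ is generically nonzero, and $\bdot{\bsigma}$ is not proportional to $\bsigma$. None of the four terms vanishes individually, nor do they cancel in pairs before taking the residue.

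What actually happens --- and this is what the paper does in one line --- is that the four terms above combine into a total $\mcd$-derivative:
\[
-\ed\,\tr\big(\tne^u\bmcs\,\bdot{\hbphi}\big)
=\tr\,\bdot{\big(\tne^u\bmcs\,\bsigma\w\bdot{\hbphi}\big)}.
\]
One checks this by expanding the right-hand side with the Leibniz rule for $\mcd$ and matching the four resulting summands to the four surviving terms. Then $\ed\varphi_{\bmcs}=\tn{Res}_{\lambda=0}$ of this, and the residue of a total $\mcd$-derivative vanishes because $\mcd(\lambda^0)=0$. Your parenthetical remark about ``$\mcd$ is a derivation and commutes with $\tn{Res}_{\lambda=0}$ up to the obvious shift'' is groping toward this, but the precise fact you need is $\tn{Res}_{\lambda=0}\circ\mcd=0$, not a commutation relation.

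Your second approach, reducing to $\cbphi$ and $\cbmcs$, has a genuine unjustified step: the assertion $\varphi_{\bmcs}\equiv -\tn{Res}_{\lambda=0}\tr(\tne^u\cbmcs\,\bdot{\cbphi})\bmod\bsigma$-exact is not established, and in fact the difference involves $\bY\balpha$ and $\bmcs\bsigma$ contributions via \eqref{eq:hcidentity} that you would still have to control. The clean computation you sketch at the end does show $\ed\,\tr(\cbmcs\,\bdot{\cbphi})=0$, which is a correct and pleasant identity, but it is for a different $1$-form and does not by itself give $\ed\varphi_{\bmcs}=0$ without the missing comparison.
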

\begin{proof}
We show that $\ed\varphi_{\bmcs}$ is the residue ($\tn{Res}_{\lambda=0}$)  of a total derivative under 
the derivation $\mcd=\lambda\dd{}{\lambda}$.

Differentiate $\tr( \tne^u\bmcs \bdot{\hbphi})$ and collecting terms, one finds
$$
- \ed \left( \tr (  \tne^u\bmcs \bdot{\hbphi})   \right) =
-\tr \left( \ed ( \tne^u\bmcs \bdot{\hbphi} ) \right) =
\tr\bdot{\left(\tne^u\bmcs\bsigma\w\bdot{\hbphi}\right)}.
$$
\end{proof}
\begin{defn}\label{defn:tau}
The \tb{tau function} $\tau$ for the extended CMC hierarchy is defined by the equation
$$\ed \log(\tau):=\varphi_{\bmcs}.
$$
Thus $\log(\tau)$ is the potential for the closed 1-form $\varphi_{\bmcs}$.
\end{defn}
Here $\varphi_{\bmcs}, \tau$ are also the dummy notations without $\pm$-sign.
 
\two
We wish to solve for $\tau$.
Recall the formula \eqref{eq:ddetS}. It shows that for the un-extended truncated CMC hierarchy, for which we set $u=0, \bsigma=0$, we have
\be\label{eq:unextau}
\ed(\det\bmcs)=-\tr(\bmcs\bdot{\bphi}) \quad\lra\quad
\tau=\tne^{ \tn{Res}_{\lambda=0}\det\bmcs}.
\ee
Based on this, one may solve for  $\tau$ for the extended CMC hierarchy as follows.
\begin{thm}\label{thm:tau}
For the extended CMC hierarchy, we have
\be\label{eq:extau}
\ed\big(\tn{Res}_{\lambda=0}(\tne^u\det\bmcs)\big)=\varphi_{\bmcs} \quad\lra\quad
\fbox{\quad$\tau=\tne^{\tn{Res}_{\lambda=0}(\tne^u\det\bmcs)}.$\quad} 
\ee
\end{thm}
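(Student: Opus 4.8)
The plan is to exhibit $\tn{Res}_{\lambda=0}(\tne^u\det\bmcs)$ as a potential for the closed $1$-form $\varphi_{\bmcs}=\tcr{-}\tn{Res}_{\lambda=0}\tr(\tne^u\bmcs\bdot{\hbphi})$ and then invoke Defn.~\ref{defn:tau}. Writing $F:=\tne^u\det\bmcs$, the target is the $1$-form identity
$$\ed F=-\tr(\tne^u\bmcs\bdot{\hbphi})-\bdot{(F\bsigma)}\,,$$
after which the theorem is immediate: the operator $\tn{Res}_{\lambda=0}$ commutes with $\ed$ (the exterior derivative does not act on the formal spectral parameter $\lambda$), and it annihilates any quantity of the form $\bdot{(\,\cdot\,)}=\mcd(\,\cdot\,)$, since such a quantity has no $\lambda$-degree $0$ component. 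Hence $\ed\big(\tn{Res}_{\lambda=0}F\big)=-\tn{Res}_{\lambda=0}\tr(\tne^u\bmcs\bdot{\hbphi})=\varphi_{\bmcs}$, and comparison with Defn.~\ref{defn:tau} gives $\log(\tau)=\tn{Res}_{\lambda=0}(\tne^u\det\bmcs)$ up to an additive constant, i.e.\ the asserted formula (the normalization of the constant being immaterial).

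To establish the $\ed F$ identity I would first redo the computation behind \eqref{eq:ddetS} for the extended hierarchy. Since $\bmcs$ is $\bmg$-valued we have $\tr(\bmcs)=0$, hence $\bmcs^2+\det(\bmcs)I_2=0$; differentiating this and taking traces (with cyclicity) gives $\ed(\det\bmcs)=-\tr(\bmcs\,\ed\bmcs)$, while applying $\mcd$ to the same relation gives $\tr(\bmcs\bdot{\bmcs})=-\bdot{(\det\bmcs)}$ and $\tr(\bmcs^2)=-2\det\bmcs$. Substituting the extended structure equation \eqref{eq:dS+2} in the form $\ed\bmcs=\bdot{\hbphi}-[\hbphi,\bmcs]-\bmcs\bdot{\bsigma}-\bdot{\bmcs}\,\bsigma$ and using cyclicity to kill $\tr(\bmcs[\hbphi,\bmcs])=0$, one obtains
$$\ed(\det\bmcs)=-\tr(\bmcs\bdot{\hbphi})-2\det(\bmcs)\,\bdot{\bsigma}-\bdot{(\det\bmcs)}\,\bsigma\,.$$

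Then I would differentiate $F=\tne^u\det\bmcs$, insert $\ed u=\tcr{+}\bdot{\bsigma}\tcr{-}\bdot{u}\,\bsigma$ from \eqref{eq:u2}, and collect. Using $\bdot{F}=\tne^u\bdot{u}\det\bmcs+\tne^u\bdot{(\det\bmcs)}$, every term other than $-\tr(\tne^u\bmcs\bdot{\hbphi})$ assembles into $-F\bdot{\bsigma}-\bdot{F}\,\bsigma=-\mcd(F\bsigma)=-\bdot{(F\bsigma)}$: the $\bdot{u}\,F\,\bsigma$ contributions cancel, and the $\det\bmcs\,\bdot{\bsigma}$ produced by $\ed u$ combines with the $-2\det(\bmcs)\bdot{\bsigma}$ above to leave $-F\bdot{\bsigma}$. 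Since $\bsigma,\bdot{\bsigma}$ are scalar-valued $1$-forms no reordering signs intervene, and $\mcd$ acts as a derivation commuting with $\ed$. This gives the displayed $\ed F$ identity and hence the theorem.

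The bulk of this is routine trace bookkeeping; the one genuinely load-bearing point — and the place where an error would hide — is that the leftover terms assemble with coefficient exactly $1$ into $-\bdot{(F\bsigma)}$, which relies on the exact relation $\tr(\bmcs^2)=-2\det\bmcs$ and on the precise form of \eqref{eq:u2}, together with the fact that $\tn{Res}_{\lambda=0}\circ\mcd=0$, which is what turns that leftover into nothing. A secondary point worth stating explicitly is that $\varphi_{\bmcs}$ is closed (so that a potential exists), which is already granted by the Lemma preceding Defn.~\ref{defn:tau} — the present computation being a refinement of it that carries $\det\bmcs$ along.
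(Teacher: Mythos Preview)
Your proof is correct and follows essentially the same route as the paper's: both compute $\ed(\tne^u\det\bmcs)$, identify the leftover as a total $\mcd$-derivative killed by $\tn{Res}_{\lambda=0}$, and conclude. The only cosmetic difference is that the paper works at the matrix level with $\tne^u\bmcs^2$ and uses the equivalent structure equation \eqref{eq:dS+''} (with $\hbphi-\bmcs\bsigma$ in place of $\hbphi$), taking the trace only at the end, whereas you pass to the scalar $\det\bmcs$ immediately via $\tr(\bmcs^2)=-2\det\bmcs$ and use \eqref{eq:dS+2} directly; the bookkeeping is the same either way.
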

\begin{proof} 
Eq.\eqref{eq:dS+''} implies (for $\tr(\bmcs)=0$ and $\bmcs^2+\det(\bmcs)I_2=0$),
$$\ed(\bmcs^2)=\bmcs\bdot{\left(\hbphi-\bmcs\bsigma\right)}+\bdot{\left(\hbphi-\bmcs\bsigma\right)}\bmcs.
$$
Scale by $\tne^u$, and from \eqref{eq:u2} one finds
\be\label{eq:euS2}
\ed (\tne^u \bmcs^2)=-\bdot{(\tne^u\bmcs^2\bsigma)}+\tne^u(\bmcs\bdot{\hbphi}+\bdot{\hbphi}\bmcs).
\ee
The LHS of \eqref{eq:extau} follows by
applying the operator $\tcr{-}\tn{Res}_{\lambda=0} \tr$.
\end{proof}
\providecommand{\MR}[1]{}
\providecommand{\bysame}{\leavevmode\hbox to3em{\hrulefill}\thinspace}
\providecommand{\MR}{\relax\ifhmode\unskip\space\fi MR }
\providecommand{\MRhref}[2]{%
  \href{http://www.ams.org/mathscinet-getitem?mr=#1}{#2}
}
\providecommand{\href}[2]{#2}

\end{document}